\newcommand{\IZ}{{\mathbb{Z}}}
\newcommand{\fp}{{\mathfrak{p}}}     
\newcommand{\cO}{{\mathcal{O}}}
\newcommand{\bB}{{\mathbf{B}}}
\newcommand{\bb}{{\mathbf{b}}}
\newcommand{\bc}{{\mathbf{c}}}
\DeclareMathOperator{\End}{End}               
\DeclareMathOperator{\Aut}{Aut}                  
\DeclareMathOperator{\Res}{Res}               
\DeclareMathOperator{\Ind}{Ind}                  
\DeclareMathOperator{\Inf}{Inf}                    
\DeclareMathOperator{\Irr}{Irr}
\DeclareMathOperator{\soc}{soc}
\DeclareMathOperator{\head}{head}
\DeclareMathOperator{\rad}{rad}
\DeclareMathOperator{\Capp}{Cap}
\DeclareMathOperator{\Exc}{Ex}
\let\lra=\longrightarrow
\let\wt=\widetilde
\let\wh=\widehat
\newtheorem{thm}{Theorem}[section]
\newtheorem{lem}[thm]{Lemma}
\newtheorem{cor}[thm]{Corollary}
\newtheorem{prop}[thm]{Proposition}
\theoremstyle{theorem}
\newtheorem{thma}{Theorem}         
\newtheorem{proba}[thma]{Problem}
\theoremstyle{definition}
\theoremstyle{remark}
\newtheorem{rem}[thm]{Remark}
\newtheorem{nota}[thm]{Notation}
\begin{document}


\title[Trivial source characters in blocks with cyclic defect groups]{Trivial source characters in blocks with cyclic defect groups}

\date{\today}

\author{{Shigeo Koshitani and Caroline Lassueur}}
\address{{\sc Shigeo Koshitani}, Center for Frontier Science, Chiba University, 1-33 Yayoi-cho, Inage-ku, Chiba 263-8522, Japan.}
\email{koshitan@math.s.chiba-u.ac.jp}
\address{{\sc Caroline Lassueur}, FB Mathematik, TU Kaiserslautern, Postfach 3049, 67653 Kaiserslautern, Germany.}
\email{lassueur@mathematik.uni-kl.de}

\keywords{Blocks with cyclic defect groups, ordinary characters, Brauer trees, trivial source modules, $p$-permutation modules, vertices and sources, source algebras, endo-permutation modules, nilpotent blocks.}

\subjclass[2010]{Primary 20C20.}

\begin{abstract}
We describe the ordinary characters of trivial source modules lying in blocks with cyclic defect groups relying on their recent classification in terms of paths on the Brauer tree {\color{black} by G.~Hiss and the second author}. 
In particular, we show how to recover the exceptional constituents of such characters using the source algebra of the block. 
\end{abstract}

\thanks{
The first author was partially supported by 
the Japan Society for Promotion of Science (JSPS), Grant-in-Aid for Scientific Research
(C)19K03416, 2019--2021. The second author gratefully acknowledge financial support by DFG SFB -TRR 195 'Symbolic Tools in
Mathematics and their Application'. The present article is part of Project A18 thereof. The second author also would like to thank the Isaac Newton Institute for Mathematical Sciences, Cambridge, for support and hospitality during the programme \emph{Groups, representations and applications: new perspectives} where work on this paper was undertaken. This work was supported by EPSRC grant no EP/R014604/1.
}

\maketitle


\pagestyle{myheadings}
\markboth{S. Koshitani and C. Lassueur}{Trivial source characters in blocks with cyclic defect groups}

\section{Introduction}
Let $G$ be a finite group, let $p$ be a prime number such that $p\mid |G|$, and let $k$ be an 
algebraically closed field  of characteristic $p\geq 3$. 
Moreover, assume that we are given a $p$-modular system $(K,\cO,k)$ which is large enough for $G$ and all of its subgroups and quotients.  The main aim of this article is to provide a complete description of the ordinary characters of the  trivial source modules lying in a $p$-block $\bB$ with a non-trivial cyclic defect group $D$. 
\par
First of all, it is well-known that trivial source $kG$-modules are liftable to $\cO G$-lattices, and moreover that they lift in a unique way to a trivial source  $\cO G$-lattice. Therefore it is natural to consider the $K$-character afforded by this trivial source lift and consider the following problem. 

\begin{proba}\label{Prob:PA}
Given a $p$-block $B$ of $kG$ with non-trivial cyclic defect groups, describe all  the irreducible constituents of the ordinary characters afforded by the  trivial source lift to $\cO$ of all trivial source $\bB$-modules. 
\end{proba}

Of course a solution to Problem~\ref{Prob:PA} should be given in terms of certain block invariants, which we will determine in due course. 
To begin with, trivial source modules in blocks with cyclic defect groups are classified by \cite{HL19} using the much older classification of the indecomposable modules in such blocks by Janusz \cite{Jan69} through a so-called \emph{path} on the Brauer tree of $\bB$. See~\S\ref{ssec:indecs}. However, as trivial source modules are not invariants of the Morita equivalence class of the block $\bB$, the data of the Brauer tree is not sufficient in general. 
However, they are invariants of the source-algebra-equivalence class of $\bB$. Hence the classification of \cite{HL19} also makes use of further parameters parametrising the source algebra of $\bB$ according to \cite[Theorem 2.7]{Linck96}. Namely a certain endo-permutation $kD$-module, which we will denote $W$, and a sign function that can be determined from the values of the ordinary irreducible characters of $\bB$ at certain elements of $D$.  
\par
Our main result is Theorem~\ref{thm:main}, which  provides us with a solution to Problem~\ref{Prob:PA}, and indeed describes the $K$-character afforded by the trivial source lift of all trivial source $\bB$-modules with arbitrary non-trivial vertices in terms of the above parameters, that  is the Brauer  tree of $\bB$, the $kD$-module $W$ and the sign function. We postpone the precise statement of our main result to Section 7 because it requires to introduce a lot of notation and concepts. However,  more accurately, the irreducible constituents of these characters which are non-exceptional characters of $\bB$ can easily be  determined 
from the aforementioned path associated to the module. On the other hand, the constituents of these characters which are exceptional characters of $\bB$ are much more difficult to describe and all our results in this article focus on this problem. 
Alos notice that trivial source $\bB$-modules with trivial vertices are just the projective indecomposable modules and their characters are well-known. See~\S\ref{ssec:PIMs}.
\par
The paper is organised as follows. In Section~2, we introduce our notation and recall the necessary background results on blocks with cyclic defect groups. In Section~3, we point out some general results about characters of 
trivial source modules in blocks with cyclic defect groups. In Sections 4, 5, and 6, we describe a reduction procedure in three steps bringing us back to computing certain distinguished $K$-characters of the defect group $D$ of the block $\bB$. 
Finally, in Section~7, we recover all characters of all trivial source $\bB$-modules from those of the trivial source $\bb$-modules, where $\bb$ is the Brauer correspondent of $\bB$ in $N_G(D_1)$ and $D_1$ denotes the unique cyclic subgroup of order~$p$ of $D$. This is achieved using a perfect isometry between $\bb$ and $\bB$ induced by a Rickard complex  from Rickard's and Rouquier's work on blocks with cyclic defect groups. (See  \cite[Theorem 11.12.1]{LinckBook}.)
\par
Finally, we note that we leave the case $p=2$ for a further piece of work as it requires further technical computations on characters afforded by endo-permutation lattices with determinant one.


\vspace{6mm}
\section{Notation and quoted results}\label{sec:prelim}

\subsection{General notation}
Throughout, we let $p$ be an odd prime number and   {$G$} a finite group of order divisible by~$p$. 
We let $(K,\cO,k)$ be a $p$-modular system, where  $\cO$ denotes a complete discrete valuation ring of characteristic zero with unique maximal ideal $\frak{p}:=J(\cO)$, algebraically closed residue field $k:=\cO/\fp$ of characteristic $p$, and field of fractions $K=\text{Frac}(\cO)$, which we assume to be large enough for $G$ and its subgroups in the sense that $K$ contains a root of unity {\color{black} of order}  $\exp(G)$, the exponent of~$G$. 
\par
Unless otherwise stated, for $R\in\{\cO,k\}$, $RG$-modules are assumed to be finitely generated left $RG$-lattices, that is free as $R$-modules, and by a block~$\bB$ of~$G$, we mean a block of~$kG$. 
Given a subgroup $H\leq G$, we let $R$  denote the trivial $RG$-lattice, we write $\Res^G_H(M)$ for the restriction of the $RG$-lattice $M$ to $H$, and  $\Ind_H^G(N)$ for the induction of the $RH$-lattice $N$ to $G$. Given a normal subgroup $U$ of $G$, we write $\Inf_{G/U}^{G}(M)$ for the inflation of the $R[G/U]$-module $M$ to $G$. 
If $M$ is a uniserial $kG$-module, then we denote by $\ell(M)$ its  composition length. 
If $P$ is a $p$-group and $Q\leq P$, then  $\Omega_{P/Q}$ denotes  the \emph{relative Heller operator with respect to $Q$}. In other words, if  $M$ is an $RP$-lattice, then $\Omega_{P/Q}(M)$ is the kernel of a $Q$-relative projective cover $P_{P/Q}(M)$ of  $M$. (See  \cite{thevenazRelProj,TheSurvey} for this less standard notion.) In particular $\Omega:=\Omega_{P/\{1\}}$ is the usual Heller operator. 
We denote by $\Irr(G)$ (resp. $\Irr(\bB)$)  the set of irreducible $K$-characters  of $G$ (resp. of the block $\bB$ of $kG$). 
In general, we continue using the notation of \cite{HL19} inasmuch as it was introduced therein and we refer the reader to \cite{LinckBook, ThevenazBook} for further standard notation.\\
\subsection{Trivial source and cotrivial source lattices}\label{ssec:tscots}
An indecomposable $RG$-lattice $M$ with vertex $Q\leq G$ is called a \emph{trivial source $RG$-lattice} if the trivial $RQ$-lattice $R$ is a source of $M$.  
We adopt the convention that  \emph{trivial source $RG$-lattices} are indecomposable by definition.
\par
It is well-known that any trivial source $kG$-module $M$ is liftable to an $\cO G$-lattice. In other words, there exists an $\cO G$-lattice $\widetilde{M}$ such that $M\cong \widetilde{M}/\mathfrak{p}\widetilde{M}$ (see e.g. \cite[Corollary 3.11.4]{BensonBookI}). More accurately, in general, such modules afford several lifts, but, up to isomorphism, there is a unique
one amongst these which is a trivial source $\cO G$-lattice. We denote this trivial source lift by $\wh{M}$ and simply  by $\chi_M$ the $K$-character afforded by $\wh{M}$, that is the character of $K\otimes_{\cO}\wh{M}$.
Character values of trivial source lattices have the following properties.

\begin{lem}[{}{\cite[Lemma II.12.6]{LandrockBook}}]\label{lem:tscharacters}
Let $M$ be a trivial source $kG$-module and let $x$ is a $p$-element of $G$. Then:
\begin{enumerate}
\item[\rm(a)] $\chi_M(x)$ equals the number of indecomposable direct summands of $\Res^{G}_{\langle x \rangle}(M)$ isomorphic to the trivial $k{\langle x \rangle}$-module. In particular,  $\chi_M(x)$ is a non-negative integer.
\item[\rm(b)] $\chi_M(x)\neq 0$ if and only if $x$ belongs to some vertex of $M$.
\end{enumerate}
\end{lem}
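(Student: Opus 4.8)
The plan is to reduce the computation to the cyclic $p$-group $P:=\langle x\rangle$ and to use that trivial source lattices restrict to $p$-permutation lattices, which over a $p$-group are completely understood. First I would restrict $\wh{M}$ to $P$. Since restriction preserves the class of $p$-permutation lattices, $\Res^G_P(\wh{M})$ is a $p$-permutation $\cO P$-lattice, and because $P$ is a $p$-group its indecomposable summands are (isomorphic to) permutation lattices $\cO[P/Q]$ with $Q\leq P$. Hence $\Res^G_P(\wh{M})\cong\bigoplus_{i}\cO[P/Q_i]$ for suitable subgroups $Q_i\leq P$, and reducing modulo $\fp$ gives $\Res^G_P(M)\cong\bigoplus_{i}k[P/Q_i]$; by the Krull--Schmidt theorem the trivial $kP$-summands of $\Res^G_P(M)$ are precisely the $k[P/Q_i]$ with $Q_i=P$, and their number equals the number of summands $\cO$ appearing in $\Res^G_P(\wh{M})$.

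For part~(a) I would then use additivity of characters to write $\chi_M(x)=\chi_{K\otimes_\cO\wh{M}}(x)=\sum_i\chi_{K[P/Q_i]}(x)$, where $\chi_{K[P/Q]}(x)$ counts the points of $P/Q$ fixed by $x$ under left translation. As $x$ generates $P$, this count is $|P/Q|$ when $Q=P$ and $0$ otherwise, so each summand with $Q_i=P$ contributes $1$ and every other summand contributes $0$. Therefore $\chi_M(x)$ equals the number of indices $i$ with $Q_i=P$, that is the number of direct summands of $\Res^G_P(M)$ isomorphic to the trivial $k\langle x\rangle$-module; in particular it is a non-negative integer.

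For part~(b), by~(a) it suffices to show that $\Res^G_P(M)$ has a trivial direct summand if and only if $x$ lies in some vertex of $M$. If $x$ belongs to a vertex $Q$ of $M$, then, $k$ being a source of $M$, the trivial module is a direct summand of $\Res^G_Q(M)$ by the theory of vertices and sources, and restricting along $P\leq Q$ gives $k\mid\Res^G_P(M)$. Conversely, fix a vertex $Q$ of $M$; since $M$ has trivial source, $M\mid\Ind_Q^G(k)$, and Mackey's formula gives $\Res^G_P\Ind_Q^G(k)\cong\bigoplus_{g\in P\backslash G/Q}\Ind_{P\cap{}^{g}Q}^{P}(k)$, so if $k\mid\Res^G_P(M)$ then by Krull--Schmidt $k\mid\Ind_{P\cap{}^{g}Q}^{P}(k)$ for some~$g$. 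As this induced module is indecomposable (being a transitive permutation module for the $p$-group $P$), it must be isomorphic to $k$, which forces $P=P\cap{}^{g}Q$; hence $x\in{}^{g}Q$, and ${}^{g}Q$ is again a vertex of $M$.

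I expect the main obstacle to be the bookkeeping in the first step: one has to combine restriction to $P$, reduction modulo $\fp$, and the Krull--Schmidt theorem coherently so that counting trivial $kP$-summands of $\Res^G_P(M)$ genuinely reproduces the value $\chi_M(x)$. Once this is in place, parts~(a) and~(b) follow quickly.
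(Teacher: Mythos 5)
Your argument is correct and is essentially the standard proof of this result; the paper itself gives no proof but simply cites \cite[Lemma II.12.6]{LandrockBook}, where the same reduction to $\langle x\rangle$ via the classification of indecomposable $p$-permutation lattices over a $p$-group and the fixed-point count on $P/Q$ is carried out. The Mackey argument for part~(b) is also the standard one and is complete as written.
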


\noindent  Following the terminology of \cite[Definition 4.1.10]{HissLux}, an indecomposable $RG$-lattice $M$  with vertex $Q\leq G$ is called a \emph{cotrivial source $RG$-lattice} if the $RQ$-lattice $\Omega(R)$ is a source of $M$. 
It follows that  any cotrivial source $kG$-module $M$ is liftable to an $\cO G$-lattice and affords a unique lift  $\wh{M}$ which is a cotrivial source $\cO G$-lattice. We denote  by  $\chi_{M}$ the character afforded by $K\otimes_{\cO}\wh{M}$.

\vspace{2mm}
\subsection{Blocks with cyclic defect groups}\label{sec:cyclicblocks}
From now on, unless otherwise stated,  we let ${\bf B}$ denote a block of $kG$ with cyclic defect group $D\cong C_{p^n}$ with $n\geq 1$.
For $0\leq i\leq n$, we denote by $D_i$ the unique cyclic subgroup of  order $p^i$ and we set $N_i:=N_G(D_i)$. 
We let $e$ denote the inertial index of $\bB$ and set $m:=\frac{|D|-1}{e}$,  
which we call the \emph{exceptional multiplicity} of $\bB$. 
Then $e\mid p-1$. There are $e$ simple $\bB$-modules $S_1,\ldots,S_e$ and $e+m$ ordinary irreducible characters. We write
$$\Irr(\bB)=\{\chi_1,\ldots,\chi_e\}\sqcup\{\chi_{\lambda} \mid \lambda\in\Lambda\}\,,$$
where $\Lambda$ is an index set with  $|\Lambda|:=m$ (we will give a precise definition of $\Lambda$ in Section~\ref{sec:levelG}). If $m>1$, the characters $\{\chi_{\lambda} \mid \lambda\in\Lambda\}$ denote the exceptional characters of $\bB$, which  all restrict in the same way to the $p$-regular conjugacy classes of~$G$ and the characters  $\chi_1,\ldots,\chi_e$ denote the non-exceptional characters of $\bB$, which are $p$-rational. 
For $\Lambda'\subseteq\Lambda$, we set 
$$\chi^{}_{\Lambda'}:=\sum_{\lambda\in\Lambda'}\chi_{\lambda}\,.$$
We write $\Irr^{\circ}(\bB):=\{\chi_1,\ldots,\chi_e,\chi^{}_{\Lambda}\}$, $\Irr'(\bB):=\{\chi_1,\ldots,\chi_e\}$ and $\Irr_{\Exc}(\bB):=\{\chi_{\lambda} \mid \lambda\in\Lambda\}$. We let $\sigma(\bB)$ denote the Brauer tree  of $\bB$.  
The vertices of $\sigma(\bB)$ are labelled by the ordinary characters in $\Irr^{\circ}(\bB)$ and  the edges of $\sigma(\bB)$ are labelled by the simple $\bB$-modules $S_1,\ldots,S_e$. If $m>1$  the vertex corresponding to $\chi^{}_{\Lambda}$ is called the \emph{exceptional vertex} and is indicated with a black circle in the drawings of $\sigma(\bB)$.  Furthermore, we assume that $\sigma(\bB)$ is given with a planar embedding, determined by specifying, for each vertex of $\sigma(\bB)$, a cyclic ordering of the edges adjacent to this vertex. We use the convention that in a drawing of $\sigma(\bB)$ in the plane, the successor of an edge is the counter-clockwise neighbour of this edge. 
Let now $u$ be a generator of $D_1$. A vertex $\chi\in\Irr^{\circ}(\bB)$ of $\sigma(\bB)$ is said to be positive 
if $\chi(u)>0$ and we write $\chi>0$, whereas it is said to be negative
if $\chi(u)<0$ and in this case we write $\chi<0$. See \cite[\S4.2]{HL19}. 
The character theory of blocks with cyclic defect groups is essentially described by Dade's work \cite{Dad66}. For more detailed information relative to Brauer trees we also refer the reader to \cite[\S 17]{AlperinBook} and \cite[Chapters 1 \& 2]{HissLux}.\\

\vspace{2mm}
\subsection{Indecomposable modules  in  blocks with cyclic defect groups}\label{ssec:indecs}

By results of  Janusz \cite[\S5]{Jan69}, each indecomposable $\bB$-module $X$ which is neither projective nor simple can be encoded using a \emph{path} on $\sigma(\bB)$, which is by definition  a certain connected subgraph of $\sigma(\bB)$. This path may be seen as an ordered sequence $(E_1,\ldots,E_s)$ of edges of  $\sigma(\bB)$, called 
 \emph{top-socle sequence} of $X$, and  where $E_i,E_{i+1}$ have a common vertex for every $1\leq i\leq s-1$, the odd-labelled edges are in the head of $X$ and the even-labelled edge is in the socle of $X$, or conversely, and some edges may be passed twice if necessary.
Moreover, \cite{BleChi} associates to each indecomposable $\bB$-module $X$ two further parameters:  a \emph{direction} $\varepsilon=(\varepsilon_1,\varepsilon_s)$ and a \emph{multiplicity} $\mu$. For  $i\in\{1,s\}$ we set $\varepsilon_i=1$ if $E_i$ is in the head of $X$ and $\varepsilon_i=-1$ if $E_i$ is in the socle of $X$. If $m=1$, then $\mu:=0$. If $m>1$, then $\mu$ corresponds to the number of times that a simple module $E_{j}$ connected to the exceptional vertex  occurs as a composition factor of~$X$ (this is independent of the choice of $E_j$).  The module $X$ is entirely parametrised by its path, direction and multiplicity. We refer to \cite{Jan69,BleChi,HL19} for further details. We will use this classification in order to state our main result in Section~7.

\vspace{2mm}
\subsection{PIMs and hooks in  blocks with cyclic defect groups}\label{ssec:PIMs}
Blocks with cyclic defect groups being Brauer graph algebras (with respect to the Brauer tree), the structure of the PIMs of $\bB$,  can be described as follows (see e.g. \cite[\S 4.18]{BensonBookI}). If $S$ is a simple $\bB$-module, then its projective cover $P_{S}$ is of the form
$$P_{S}=\boxed{\begin{smallmatrix} S\\ Q_a\oplus\, Q_b \\ S\end{smallmatrix}}\,,$$
where $S=\soc(P_{S})=\head(P_{S})$ and the heart of $P_{S}$ is  $\rad(P_{S})/\soc(P_{S})=Q_a\oplus Q_b$ for two uniserial (possibly zero) $\bB$-modules $Q_a$ and $Q_b$. 
Furthermore, if the end vertices of the edge of $\sigma(\bB)$ corresponding to $S$ are labelled by the irreducible characters $\chi_a$ and $\chi_b$, 
then the projective indecomposable character corresponding to $P_{S}$ is $\Phi_{S}=\chi_{a}+\chi_{b}$.
The PIMs of~$\bB$ are precisely the trivial source $\bB$-module with vertex $D_0=\{1\}$.
Furthermore, Green's walk around the Brauer tree \cite{GreenWalk} provides us with a description of certain distinguished  indecomposable $\bB$-modules, called \emph{hooks} in \cite{BleChi}, \cite{HN12} and \cite{HL19}.   More precisely, following  \cite[\S 2.3]{BleChi}, the uniserial modules of the form 
$$H_a:=\boxed{\begin{smallmatrix}S\\Q_a\end{smallmatrix}}\qquad\text{ and }\qquad H_b:=\boxed{\begin{smallmatrix}S\\Q_b\end{smallmatrix}}$$
for a simple $\bB$-module $S$ are called the \textit{hooks} of $\bB$.  
The vertices of such modules are the defect groups of $\bB$, and any lift of $H_a$ 
affords the character $\chi_a$ and any lift of $H_b$ 
affords the character~$\chi_b$.

\vspace{2mm}
\subsection{Trivial source modules in blocks with cyclic defect groups, reduction to $kD$}\label{ssec:tscyclicblocks}
We quickly recall the principal steps in the \cite{HL19} classification of trivial source $\bB$-modules. \\

First of all,  up to isomorphism,  the set of trivial source $\bB$-modules with a given vertex $D_i\leq D$ ($1\leq i\leq n$) form exactly one $\Omega^2$-orbit  $\{\Omega^{2a}(M)\mid 0\leq a\leq e-1 \}$ of $\bB$-modules, 
where $M$ is a given trivial source $\bB$-module with vertex $D_i$, and the set of cotrivial source modules with vertex $D_i$ forms the $\Omega^2$-orbit $\{\Omega^{2a+1}(M)\mid 0\leq a\leq e-1 \}$. This follows from the fact that the trivial $kD_i$-module is periodic of period~$2$. Now, the trivial source $\bB$-modules are classified by \cite[Theorem~5.4]{HL19} in terms of their \emph{path} on the Brauer tree $\sigma(\bB)$.  Our aim is to use this classification in order to determine the $K$-characters of their trivial source lift to $\cO$.
 More precisely, we are going to go through the reduction to $kD$ used in \cite{HL19} to recover the trivial source $\bB$-modules in order to compute their ordinary characters, as well.\\

 Thus, throughout we let  $\bb$ denote the Brauer correspondent of $\bB$ in~$N_1$, $\bc$ be a block of $C_G(D_1)$ covered by $\bb$ and $T(\bc)$ be the inertia group of $\bc$ in $N_1$, and $A$ denote a source algebra of~$\bc$. Then $D$ is a defect group of the blocks $\bb$, $\bb'$ and $\bc$. The block $\bc$ is nilpotent, whereas the blocks $\bb$ and $\bb'$ have inertial index $e$ and exceptional multiplicity $m$. 
 Furthermore, we let $W$ denote the indecomposable capped endo-permutation $kD$-module parametrising  the block $\bB$ up to source-algebra equivalence. (See \cite[Theorem~2.7]{Linck96}.) Concretely, $W$ may be thought of either as a source of the simple $\bb$-modules, or as a source of the unique simple $\bc$-module. Hence $D_1$ acts trivially on $W$.\\

First, we recall that if $P$ is a finite $p$-group, then a $kP$-module $M$ is called \emph{endo-permutation} if its  $k$-endomorphism algebra $\End_k(M)$ is a permutation $kP$-module. Moreover, an endo-permutation $kP$-module $M$ is said to be \emph{capped} if it has an indecomposable direct summand with vertex $P$, which we is usually denoted by $\Capp(M)$. For further details, we refer the reader to the survey \cite{TheSurvey}. Endo-permutation modules over abelian $p$-groups were classified 
by Dade {\color{black}\cite{DADE78a,DADE78b}}. This classification -- see  \cite{TheSurvey} and \cite[\S 4.5]{HL19}  --  allows us to write the module $W$ parametrising the source-algebra of $\bB$ as follows:

\begin{nota}\label{nota:W}
The $kD$-module $W$ has the form  
$$W=\Omega_{D/D_0}^{a_0}\circ\Omega_{D/D_1}^{a_1}\circ\cdots\circ\Omega_{D/D_{n-1}}^{a_{n-1}}(k)$$
with  $a_i\in\{0,1\}$ for each $0\leq i\leq n-1$. Moreover, 
we assume that $i_0<i_1<\ldots< i_s$ are the indices such that $a_{i_0}=\ldots=a_{i_s}=1$ and $a_i=0$ if $i\in\{0,\ldots,n-1\}\setminus\{i_0,\ldots, i_s\}$, and we set $s:=-1$ if $W=k$. We may in fact also assume that  $a_0=0$, since $D_1$ acts trivially on~$W$. Hence, in the sequel,  we will write
$$W=W(0<i_0<i_1<\ldots< i_s<n)\,.$$
Furthermore, for each $1\leq i\leq n$ we set $\ell_i:=\dim_k\left( \Capp(\Res^D_{D_i}(W)) \right)$, which can be explicitly computed as $\ell_i=\sum_{0\leq i_j<i}(-1)^j p^{i-i_j}+(-1)^{|\{j\mid 0\leq i_j<i\}|}$ (see \cite[Theorem~5.1]{HL19}).
\end{nota}

The reduction to $kD$ works as follows.  
 Firstly, the Green correspondence with respect to $(G,N_1;D)$, which we denote by $f^{-1}$ (upwards) and $f$ (downwards), commutes with the Brauer correspondence and preserves vertices and sources, hence trivial source modules. 
Secondly, the theorem of Fong-Reynolds provides us with a source-algebra equivalence between $\bb$ and $\bb'$, which obviously preserve trivial source modules. Thirdly, we can then reduce to $\bc$, which is a nilpotent block, via induction/restriction using Clifford's theory, which also preserve vertices and sources.  
We can then  further reduce to $kD$ via two  Morita equivalences (see \S\ref{ssec:PuigForBlockc}  for details):
 $$
\begin{tikzcd}[column sep=large]
 kD \arrow[r, leftrightarrow,  "\sim_M", "W\otimes_k-"']   &  A    \arrow[r, leftrightarrow, "\sim_M"]   &     \bc
\end{tikzcd}
$$
If $M$ is an indecomposable $\bc$-module, then we simply call \emph{Morita correspondent of $M$}, the Morita correspondent of $M$ in $kD$ under the composition of these two Morita equivalences. 

\begin{lem}[{}{\cite[Lemma 4.6]{HL19}}]\label{lem:tsc}
Let $M$ be the unique trivial source $\bc$-module with vertex $1<D_i\leq D$. Then the Morita correspondent of $M$ is the $kD$-module
$$U_{D_i}(W):=\left(\Ind_{D_i}^D\circ\Capp\circ\Res^D_{D_i}\right)\!(W)$$
and satisfies $\dim_k(U_{D_i}(W))=\ell_i\cdot p^{n-i}$.
\end{lem}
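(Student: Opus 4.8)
The plan is to compute the Morita correspondent of $M$ directly by tracing $M$ through the two Morita equivalences $kD \sim_M A \sim_M \bc$, and to exploit the fact that $M$ is the *unique* trivial source $\bc$-module with vertex $D_i$. First I would recall that a Morita equivalence between blocks need not preserve vertices and sources in general, but the composite equivalence $kD \sim_M \bc$ here is induced by the bimodule $A$ (viewed appropriately) together with $W \otimes_k -$, so it is in fact a source-algebra-type equivalence in the relevant range: the second equivalence $A \sim_M \bc$ is the standard Morita equivalence attached to a source algebra and preserves trivial source modules and their vertices, while the first one twists sources by $W$. Concretely, the trivial $kD_i$-module, pushed through, becomes (a cap of) $\Res^D_{D_i}(W)$ as a source. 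So the $kD$-side module corresponding to $M$ must be an indecomposable trivial-source-up-to-twist module with vertex $D_i$ and source $\Capp(\Res^D_{D_i}(W))$.

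Next I would identify which indecomposable $kD$-module this is. The natural candidate is obtained by Green correspondence / induction from its source: one takes $\Capp(\Res^D_{D_i}(W))$, an indecomposable $kD_i$-module with vertex $D_i$, and induces it up to $D$. Because $D$ is cyclic, hence has a very restricted module category, $\Ind_{D_i}^D$ of an indecomposable $kD_i$-module with vertex $D_i$ decomposes into indecomposables all of which have vertex $D_i$, and the relevant one (the one Green-corresponding to the source) is precisely the summand that restricts back to contain $\Capp(\Res^D_{D_i}(W))$. I would argue that in fact $U_{D_i}(W) := \Ind_{D_i}^D(\Capp(\Res^D_{D_i}(W)))$ is already indecomposable, or else that its indecomposable summand matching the data is all of it; this can be checked either by a dimension count or by invoking the classification of indecomposable $kD$-modules together with the endo-permutation structure of $W$ from Notation~\ref{nota:W}. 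The uniqueness of $M$ on the $\bc$-side (there is exactly one trivial source $\bc$-module per vertex $D_i$, by the $\Omega^2$-orbit description in \S\ref{ssec:tscyclicblocks}) then forces the correspondent to be exactly this module, since the equivalence is a bijection on isomorphism classes of indecomposables with prescribed vertex.

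Finally, the dimension formula is a routine computation once the identification is made: $\dim_k U_{D_i}(W) = [D:D_i]\cdot \dim_k \Capp(\Res^D_{D_i}(W)) = p^{n-i}\cdot \ell_i$ by definition of $\ell_i$ in Notation~\ref{nota:W}. The main obstacle I anticipate is the first step — justifying cleanly that the composite Morita equivalence $kD \sim_M \bc$ sends the trivial-source $\bc$-module with vertex $D_i$ to a module whose source on the $kD$-side is controlled by $W$ in the stated way, i.e.\ keeping careful track of how the equivalence $W \otimes_k -$ interacts with restriction to $D_i$ and with the capping operation. Making that interaction precise is exactly where one needs the detailed structure of $A$ as a source algebra and Puig's description recalled in \S\ref{ssec:PuigForBlockc}; everything after that is bookkeeping on the cyclic $p$-group $D$. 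One should also double-check that $\Capp(\Res^D_{D_i}(W))$ is nonzero (so that $U_{D_i}(W)\neq 0$), which follows since $W$ is capped as a $kD$-module and $D_i \le D$ with $\Res^D_{D_i}$ of a capped endo-permutation module again capped.
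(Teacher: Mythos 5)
The paper itself contains no proof of this statement: it is quoted verbatim from \cite[Lemma 4.6]{HL19}, and the authors explicitly refer the reader there for the proof. So the only comparison available is with the expected argument, and your outline is essentially that argument; the strategy is correct and you identify the right crux. Two remarks on where it must be tightened. First, the step you flag as the main obstacle is best closed by a short direct computation rather than an informal appeal to ``sources twisted by $W$'': since $A\cong S\otimes_k kD$ with $S=\End_k(W)\cong W\otimes_k W^*$ as $kD$-modules, one gets
$$A\otimes_{kD_i}k\;\cong\;S\otimes_k k[D/D_i]\;\cong\;W\otimes_k\bigl(W^*\otimes_k\Ind_{D_i}^D(k)\bigr)\;\cong\;W\otimes_k\Ind_{D_i}^D\bigl(\Res^D_{D_i}(W^*)\bigr),$$
so under $W\otimes_k-$ the trivial source $A$-modules with vertex contained in $D_i$ correspond exactly to the indecomposable summands of $\Ind_{D_i}^D(\Res^D_{D_i}(W^*))$; the non-capped summands of $\Res^D_{D_i}(W^*)$ induce up to modules with vertex strictly below $D_i$, while the capped summand induces indecomposably (for a cyclic $p$-group one has $\Ind_{D_i}^D(M_r)\cong M_{rp^{n-i}}$, which also settles your indecomposability worry and gives the dimension count in one stroke). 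Second, your formula has $W$ where this computation naturally produces $W^*$; you should say explicitly why this is harmless, namely that every indecomposable module over a cyclic $p$-group is self-dual, so $\Capp(\Res^D_{D_i}(W^*))\cong\Capp(\Res^D_{D_i}(W))$. With these two points supplied, the appeal to the uniqueness of $M$ on the $\bc$-side becomes superfluous: the vertex-and-source bookkeeping already pins down the correspondent.
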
 
 
\noindent  We emphasise that $U_{D_i}(W)$ is not a trivial source module any more in general.  We refer the reader to \cite{HL19} for proofs and further details on this subsection.

 \vspace{2mm}
\subsection{On Puig's characterisation of source algebras of nilpotent blocks}\label{ssec:PuigForBlockc} The block $\bc$ lifts uniquely to a block of  $\cO C_G(D_1)$, say $\tilde{\bc}$.
Then Puig's theorem on nilpotent blocks  (see \cite[Theorem 8.11.5, Corollary 8.11.11]{LinckBook}) states that  any source algebra $\tilde{A}$ of the block $\tilde{\bc}$ is isomorphic to 
$$\tilde{S}\otimes_{\cO} \cO D$$
 as interior $D$-algebra,
where $\tilde{S}:=\End_{\cO}(\tilde{W})$ for an indecomposable endo-permutation $\cO D$-module $\tilde{W}$ with vertex $D$  and of determinant $1$.  Moreover,  if $W:=k\otimes_{\cO}\tilde{W}$, then any source algebra $A$ of the block~$\bc$ is  isomorphic to  
 $$S\otimes_k kD$$
 as interior $D$-algebra, 
where $S:=\End_k(W)$ and $W$ is also an indecomposable endo-permutation $kD$-module with vertex $D$. 
Moreover, the module $W$ can be explicitly realised as a source of the unique simple $\bc$-module $V$, and 
hence also as a source of the simple $\bb$-modules. As  $D_1\trianglelefteq C_G(D_1)$ it follows 
from  Clifford's theory  that $D_1$ acts trivially on $V$, hence also on $W$.

 \par
 More precisely, we have Morita equivalences:
 $$
\begin{tikzcd}[column sep=large]
\Phi_k:\,\,\, kD \arrow[r, leftrightarrow,  "\sim_M", "W\otimes_k-"']   &  A    \arrow[r, leftrightarrow, "\sim_M"]   &     \bc
\end{tikzcd}
$$
The first one is obtained by tensoring over $k$ with $W$ viewed as an $S$-module. In other words, an arbitrary indecomposable $A$-module is of the form $W\otimes_k U$, where $U$ is an indecomposable $kD$-module. For the second one let $i\in \bc^D$ be a source idempotent of $\bc$ such that $A=ikGi$. Then the $(\bc,A)$-bimodule $\bc i$ and the $(A,\bc)$-bimodule $i\bc$ realise a Morita equivalence between $A$ and $\bc$, where an indecomposable $\bc$-module $M$ corresponds to the $A$-module $iM$.  See \cite[(38.2)]{ThevenazBook}.  There are also two Morita equivalences analogously defined over $\cO$:
 $$
\begin{tikzcd}[column sep=large]
\Phi_{\cO}:\,\,\,  \cO D  \arrow[r, leftrightarrow,  "\sim_M", "\widetilde{W}\otimes_{\cO}-"']   &  \widetilde{A}    \arrow[r, leftrightarrow, "\sim_M"]   &     {\widetilde\bc}
\end{tikzcd}
$$
Tensoring everything with $K$ we write $W_K:=K\otimes_\cO \widetilde{W}$,  $S_K:=K\otimes_\cO S=\End_K(W_K)$, so that there are Morita equivalences
 $$
\begin{tikzcd}[column sep=large]
\Phi_{K}:\,\,\,   KD  \arrow[r, leftrightarrow,  "\sim_M", "W_K\otimes_{K}-"']   &  K\otimes_\cO\widetilde{A}\cong S_K\otimes_KKD  \arrow[r, leftrightarrow, "\sim_M"]   &    K\otimes_\cO{\widetilde\bc}\,.
\end{tikzcd}
$$
These in turn induce 
bijections \enlargethispage{5mm}
$$
\begin{tikzcd}[column sep=large]
\Gamma_{K}:\,\,\,  \Irr_K(D)  \arrow[r, rightarrow,  "\sim"]   &  \Irr_K(K\otimes_{\cO}\wt{A})  \arrow[r, rightarrow, "\sim"]   &  \Irr_K(\bc)
\end{tikzcd}
$$
between the sets of $K$-characters of $D$ and $\bc$, where $\Irr_K(K\otimes_{\cO}\wt{A})=\{\rho_{ \widetilde{W} }\cdot \lambda\mid \lambda\in \Irr_K(D) \}$ (see \cite[(52.6)]{ThevenazBook}) and $\rho_{\widetilde{W}}$ is the $K$-character afforded by $\widetilde{W}$.
By abuse of notation, we also denote by $\Gamma_{K}$ its $\IZ$-linear extension to $\IZ\Irr_K(D)$. 
Finally, we may use these bijections to label the $K$-characters of $\bc$. In other words, we may write
$$\Irr_K(\bc)=\{\psi_{\lambda}\mid \lambda\in \Irr_K(D)\}\quad \text{where }\psi_{\lambda}:=\Gamma_K(\lambda)\,.$$
See \cite[(52.8)(a) and its proof]{ThevenazBook}.


\vspace{8mm}
\section{Ordinary characters of trivial source modules: general results}


 \subsection{PIMs and hooks}
 
 We start with two elementary cases, which already let us rule out the case in which the exceptional multiplicity is one.

\begin{lem}\label{lem:PIMhooksSimples}{\ }
\begin{enumerate} 
\item[\rm(a)] If $M$ is a trivial source $\bB$-module with vertex $D_0=\{1\}$, then $M$ is a PIM. In other words, there exists a simple $\bB$-module $S$ such that $M=P_S$ and 
$$\chi_M=\chi_a+\chi_b\,,$$ where $\chi_a$ and $\chi_b$ label the vertices of $\sigma(\bB)$ adjacent to the edge labelled by $S$.
\item[\rm(b)] If a hook $M$ of $\bB$ is a trivial source module, then $\chi_{M}\in\Irr^{\circ}(\bB)$ and  $\chi_M(x)> 0$ for each $x\in D$. 
\end{enumerate}
\end{lem}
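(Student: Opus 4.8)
\medskip

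The plan is to handle the two parts separately, using the structural facts recalled in \S\ref{ssec:PIMs} together with Lemma~\ref{lem:tscharacters}.

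\medskip

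For part~(a): the vertex of a trivial source module equals a vertex of its source, so a trivial source $\bB$-module $M$ with vertex $D_0 = \{1\}$ is projective; since it is also indecomposable, $M = P_S$ for some simple $\bB$-module $S$. The character of $P_S$ is the projective indecomposable character $\Phi_S$, and by the description of the PIMs of a Brauer graph algebra recalled in \S\ref{ssec:PIMs} we have $\Phi_S = \chi_a + \chi_b$, where $\chi_a,\chi_b$ label the two end vertices of the edge of $\sigma(\bB)$ corresponding to $S$. (If that edge is a loop at the exceptional vertex, one of the $\chi_i$ is to be read as $\chi_\Lambda$; the formula is unchanged.) This part is essentially a matter of quoting the right facts.

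\medskip

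For part~(b): a hook $M = H_a = \boxed{\begin{smallmatrix}S\\Q_a\end{smallmatrix}}$ (or $H_b$) has the defect group $D$ of $\bB$ as a vertex, and by \S\ref{ssec:PIMs} any lift of $H_a$ affords the character $\chi_a \in \Irr^\circ(\bB)$. So if $M$ happens to be a trivial source module, its trivial source lift $\wh{M}$ is \emph{a} lift, whence $\chi_M = \chi_a \in \Irr^\circ(\bB)$ (resp. $\chi_M = \chi_b$ for $H_b$). For the positivity statement, let $x \in D$ be a $p$-element; since $D$ is a vertex of $M$ and $\langle x\rangle \leq D$, the element $x$ lies in a vertex of $M$, so Lemma~\ref{lem:tscharacters}(b) gives $\chi_M(x) \neq 0$, and Lemma~\ref{lem:tscharacters}(a) forces $\chi_M(x) > 0$. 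Every element of $D$ is a $p$-element, so $\chi_M(x) > 0$ for all $x \in D$.

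\medskip

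\textbf{Main obstacle.} There is no serious obstacle: both parts are direct consequences of results recalled in the preliminaries. The only point requiring a little care is making sure that the trivial source lift $\wh{M}$ is indeed the (or a) lift to which the quoted statement about $\chi_a$ applies — i.e.\ that "any lift of $H_a$ affords $\chi_a$" really does pin down $\chi_M$ — and, in part~(a), correctly reading off the labels $\chi_a,\chi_b$ when the relevant edge is adjacent to the exceptional vertex.
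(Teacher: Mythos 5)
Your proposal is correct and matches the paper's argument, which simply cites \S\ref{ssec:PIMs} for both parts and Lemma~\ref{lem:tscharacters} for the positivity in (b); you have merely written out those references in full. No gaps.
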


\begin{proof}
\begin{enumerate}
\item[\rm(a)] Well-known. See \S\ref{ssec:PIMs}. 
\item[\rm(b)] See \S\ref{ssec:PIMs} and Lemma~\ref{lem:tscharacters}. 
\end{enumerate}
\end{proof}

\begin{cor}[The case $m=1$]
If $m=1$, then the 
trivial source 
 $\bB$-modules are precisely the PIMs and the hooks of $\bB$ whose Green correspondents in $\bb$ are simple.  Their $K$-characters are described by Lemma~\ref{lem:PIMhooksSimples}(a) and ~(b). 
\end{cor}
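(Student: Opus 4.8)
The plan is to reduce the statement to the case analysis already set up in \S\ref{ssec:tscyclicblocks} and \S\ref{ssec:PIMs}, using the fact that when $m=1$ the exceptional vertex carries a single ordinary character, so the ``exceptional'' phenomena that make Problem~\ref{Prob:PA} hard simply do not arise. First I would recall from \S\ref{ssec:tscyclicblocks} that every trivial source $\bB$-module has vertex $D_i$ for some $0 \le i \le n$, and that those with vertex $D_0 = \{1\}$ are exactly the PIMs by Lemma~\ref{lem:PIMhooksSimples}(a). So it remains to identify the trivial source $\bB$-modules with non-trivial vertex $D_i$ ($1 \le i \le n$). By the first paragraph of \S\ref{ssec:tscyclicblocks}, for each such $i$ these modules form a single $\Omega^2$-orbit $\{\Omega^{2a}(M) \mid 0 \le a \le e-1\}$; when $m = 1$ we have $e = |D| - 1 = p^n - 1$, but the key point is structural rather than numerical.

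The core of the argument is that when $m=1$ the block $\bB$ has a Brauer tree with no exceptional vertex (equivalently, the exceptional multiplicity being $1$ means $\chi_\Lambda$ is a genuine irreducible character and $\Irr^{\circ}(\bB) = \Irr(\bB)$), and in this situation a trivial source module with non-trivial vertex must be \emph{uniserial}. Indeed, I would invoke the classification via paths: a trivial source $\bB$-module with non-trivial vertex corresponds to a path on $\sigma(\bB)$, and one checks (using Lemma~\ref{lem:tscharacters}, which forces $\chi_M$ to take non-negative integer values on $p$-elements, together with the structure of the PIMs recalled in \S\ref{ssec:PIMs}) that the only such modules are the hooks $H_a$, $H_b$. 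Concretely, the Green correspondent $f(M)$ in $\bb$ is a trivial source $\bb$-module with the same vertex; since $\bb$ has $m = 1$ as well and its source algebra is controlled by $W$, and since a trivial source module with vertex $D_i$ restricted along the reduction to $kD$ must land on $U_{D_i}(W)$ (Lemma~\ref{lem:tsc}), the constraint $m=1$ pins down $W$ and forces $f(M)$ to be simple. Tracing back through the Green correspondence, $M$ is then a hook whose Green correspondent in $\bb$ is simple, and conversely any such hook is a trivial source module.

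Once the modules are identified as PIMs and hooks-with-simple-Green-correspondent, the character statement is immediate: Lemma~\ref{lem:PIMhooksSimples}(a) gives $\chi_M = \chi_a + \chi_b$ for the PIMs, and Lemma~\ref{lem:PIMhooksSimples}(b) gives $\chi_M \in \Irr^{\circ}(\bB)$ with $\chi_M(x) > 0$ for all $x \in D$ in the hook case, which is exactly what the paragraph on hooks in \S\ref{ssec:PIMs} asserts (any lift of $H_a$ affords $\chi_a$, any lift of $H_b$ affords $\chi_b$). No further computation is needed.

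The main obstacle I anticipate is the precise verification that, when $m=1$, a trivial source $\bB$-module with non-trivial vertex is forced to be a hook --- i.e.\ that no ``longer'' path on the Brauer tree can give a trivial source module in this case. This is where one genuinely uses the reduction to $kD$ and the explicit shape of $W$: with $m = 1$ one must check that the endo-permutation module $W$ and the modules $U_{D_i}(W)$ are compatible only with the shortest possible paths, equivalently that the Green/Fong--Reynolds/Clifford-theoretic reduction of \cite{HL19} sends trivial source $\bB$-modules of non-trivial vertex precisely to the simple $\bb$-modules and back to hooks. I would lean on \cite[Theorem~5.4]{HL19} (the path classification of trivial source $\bB$-modules) rather than reprove it: once that classification is granted, reading off which paths occur when $m=1$ is a short combinatorial check on the Brauer tree, and the rest follows from Lemma~\ref{lem:PIMhooksSimples}.
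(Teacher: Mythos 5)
The decisive step in the paper's argument is precisely the ``numerical'' point you set aside. Since $m=\frac{|D|-1}{e}$ and $e\mid p-1$, the hypothesis $m=1$ gives $e=|D|-1=p^n-1\mid p-1$, which forces $n=1$, i.e.\ $D=D_1$ is cyclic of order $p$. Everything collapses from there: the only possible non-trivial vertex is $D$ itself; since $D_1=D$ acts trivially on the endo-permutation module $W$ (see \S\ref{ssec:PuigForBlockc}) we get $W=k$, so the simple $\bb$-modules, which all have vertex $D$ and source $W$, are trivial source modules, and they exhaust the trivial source $\bb$-modules with vertex $D$ (one $\Omega^2$-orbit of size $e$, and $\sigma(\bb)$ is a star, so these are the $e$ simple modules). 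Green correspondence with respect to $(G,N_1;D)$, which preserves vertices and trivial sources, then identifies the trivial source $\bB$-modules with vertex $D$ with the hooks whose Green correspondents in $\bb$ are simple. By declaring the key point ``structural rather than numerical'' you lose access to $D=D_1$, and as a result the two load-bearing assertions in your second paragraph --- that ``the constraint $m=1$ pins down $W$'' and that trivial source modules with non-trivial vertex must be hooks --- are left unproved. Without $D=D_1$ there is no reason for $W$ to be trivial, and your proposed route of ruling out longer paths via Lemma~\ref{lem:tscharacters} and the shape of the PIMs is not carried out; it is deferred to a ``short combinatorial check'' that is never performed and would in any case have to range over all vertices $D_i$, $1\le i\le n$.

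If you do want to argue by quoting \cite[Theorem~5.4]{HL19} wholesale, that is a legitimate alternative in principle, but you would still need the observation $|D|=p$ (or an explicit argument in its place) to read off that only the hook paths survive; as written, the proposal neither derives this nor substitutes anything for it. The character statement at the end is fine once the module-theoretic classification is in place.
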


\begin{proof}
If $m=1$, then $e=|D|-1$, hence $D=D_1$ is cyclic of order $p$.  The 
 trivial source
 $\bB$-modules with vertex $D_0$ are the PIMs of $\bB$.  Now as $D=D_1$, and hence $N_G(D)=N_G(D_1)$, the simple  $\bb$-modules, which all have vertex $D$, are trivial source modules by Clifford theory. These are then all the trivial source $\bb$-modules with vertex $D$ and  their Green correspondents in $\bB$ must be exactly the trivial source $\bB$-modules with vertex $D$. The claim follows.
\end{proof}

$$\boxed{\text{Thus, henceforth, we may  assume that  $m>1$.}}$$
\bigskip 

\subsection{Arbitrary vertices}

Next we state some general facts about characters of trivial source modules with arbitrary vertices. 

\begin{nota} \label{nota:chiM}
If $M$ is a trivial source $\bB$-module (with an arbitrary vertex), then the $K$-character $\chi_M$ afforded by $\wh{M}$, the trivial source lift of $M$, satisfies
$$\langle\chi_M,\chi\rangle_G\in\{0,1\} \quad \text{for all } \chi\in\Irr(\bB)$$
e.g. by  \cite[Theorem~A.1(d)]{HL19} if $e>1$, whereas it is obvious if $e=1$. Therefore, throughout we shall write
$$\chi_M=\Psi_M+\Xi_M$$
where $\Psi_M$ is a sum (possibly empty) of pairwise distinct non-exceptional irreducible characters in $\Irr'(\bB)$ and $\Xi_M$ is a sum  (possibly empty) of pairwise distinct non-exceptional irreducible characters in $\Irr_{\Exc}(\bB)$.  We call $\Psi_M$ the \emph{non-exceptional part} of $\chi_M$ and $\Xi_M$ the \emph{exceptional part} of $\chi_M$. By the above $\Xi_M$ is of the form $\Xi_M=\chi^{}_{\Lambda'}$ for sum $\Lambda'\subseteq\Lambda$ and $|\Lambda'|=\langle\Xi_M,\Xi_M\rangle_G$. 
\end{nota} 

The irreducible constituents of the character $\Psi_M$ are entirely determined by \cite[Theorem~5.4]{HL19} together with \cite[Theorem~A.1]{HL19}. Hence our main aim task is to determine the constituents of $\Xi_M$ in the general case. \\

We start by proving that for a non-projective trivial source module $M$ which is not a hook, $\Xi_M$ is invariant under $\Omega^2$, hence depend only on the order  of the vertices. 

\begin{lem}\label{lem:=exc}
Assume $e>1$ and $m>1$. 
Let $M$ be  a non-projective trivial source $\bB$-module which is not a hook. Then
$$\Xi_{\Omega^{2a+1}(M)}=\chi^{}_{\Lambda}-\Xi_{M}\quad\text{ and }\quad\Xi_{\Omega^{2a}(M)}=\Xi_{M}$$
for each $0\leq a <e$. In particular, if $M$ and $N$ are two non-isomorphic trivial source $\bB$-modules with 
{\color{black}a common vertex $D_i$ (where $1\leq i\leq n$) and  which are not hooks,  then $\Xi_M=\Xi_N$. 
}
\end{lem}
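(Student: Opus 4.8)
The plan is to exploit the two structural facts recalled in the excerpt: first, that the set of trivial source $\bB$-modules with a fixed vertex $D_i$ (for $1\le i\le n$) forms a single $\Omega^2$-orbit $\{\Omega^{2a}(M)\mid 0\le a\le e-1\}$, and second, that the trivial $kD_i$-module is periodic of period $2$, so that $\Omega$ maps trivial source modules with vertex $D_i$ to cotrivial source modules with vertex $D_i$ and back. The main point is therefore to track what $\Omega$ does to the character $\chi_M=\Psi_M+\Xi_M$, and then iterate.

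\medskip

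First I would set up the relationship between $\chi_M$ and $\chi_{\Omega(M)}$ on the level of Grothendieck groups. Since $M$ is a trivial source $\bB$-module with vertex $D_i$ ($i\ge 1$) which is not a hook (and not projective), one knows from the structure theory that $M$ is not periodic-of-period-one in any degenerate sense and that $\Omega(M)$ is again indecomposable and non-projective; in fact $\Omega(M)$ is the cotrivial source module with vertex $D_i$ in the same Heller orbit. The trivial source lift $\widehat{M}$ and the cotrivial source lift $\widehat{\Omega(M)}$ fit into a short exact sequence of $\cO G$-lattices
\begin{equation*}
0\longrightarrow \widehat{\Omega(M)}\longrightarrow \widehat{P}\longrightarrow \widehat{M}\longrightarrow 0,
\end{equation*}
where $\widehat{P}$ is the projective cover, an $\cO G$-lattice lifting the projective cover of $M$. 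Tensoring with $K$ gives $\chi_{\Omega(M)}=\Phi-\chi_M$ in $\mathbb{Z}\Irr(\bB)$, where $\Phi$ is the projective character $[K\otimes_\cO\widehat{P}]$. Because $\Phi$ is a $\mathbb{Z}_{\ge 0}$-combination of the projective indecomposable characters $\Phi_S=\chi_a+\chi_b$, its exceptional part is $\langle\Phi,\chi_\lambda\rangle_G\cdot\chi_\Lambda$ for any fixed $\lambda\in\Lambda$, and a short count using the Brauer tree (every edge adjacent to the exceptional vertex contributes one copy of $\chi_\Lambda$ to its $\Phi_S$) shows this coefficient equals the number of times a simple module attached to the exceptional vertex appears in the head of $M$, which for a non-hook trivial source module is exactly $1$. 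Hence $\Xi_{\Omega(M)}=\chi_\Lambda-\Xi_M$; this is the key identity. Iterating, $\Xi_{\Omega^2(M)}=\chi_\Lambda-(\chi_\Lambda-\Xi_M)=\Xi_M$, and then by an immediate induction $\Xi_{\Omega^{2a}(M)}=\Xi_M$ and $\Xi_{\Omega^{2a+1}(M)}=\chi_\Lambda-\Xi_M$ for all $a$.

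\medskip

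For the final ``in particular'' assertion: if $M$ and $N$ are non-isomorphic trivial source $\bB$-modules sharing the vertex $D_i$ and neither is a hook, then by the $\Omega^2$-orbit description there is some $a$ with $N\cong\Omega^{2a}(M)$, whence $\Xi_N=\Xi_{\Omega^{2a}(M)}=\Xi_M$ by what was just proved. (One should note that if one of them were a hook the argument would not go through, since hooks form a separate, exceptional part of the classification and $\Omega$ of a hook need not be a cotrivial source module of the same shape — this is precisely why the hypothesis ``not a hook'' is imposed, and the previous lemmas in this subsection handle the hook case separately.)

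\medskip

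I expect the main obstacle to be the bookkeeping in the middle step, namely computing the exceptional coefficient of the projective cover character $\Phi$ precisely — one has to be careful that the ``head'' of the trivial source module $M$, read off from its top-socle sequence/path on $\sigma(\bB)$, contains exactly one edge incident to the exceptional vertex (with multiplicity one as a direct summand of the head), which is where the ``not a hook'' and ``$m>1$, $e>1$'' hypotheses enter, and to match this against the decomposition of $\Phi$ into the $\Phi_S$. The rest — the short exact sequence, passing to $K$, and the induction on $a$ — is routine. An alternative, perhaps cleaner, route avoiding an explicit projective cover is to use that for any $kG$-module $X$ one has $[\widehat{\Omega(X)}]+[\widehat X]\in\{\text{projective characters}\}$ together with the fact, from \cite[Theorem~A.1]{HL19}, that $\langle\chi_X,\chi_\lambda\rangle$ is independent of $\lambda\in\Lambda$, reducing the problem to pinning down a single integer; I would present whichever of these the authors' earlier results make most immediate.
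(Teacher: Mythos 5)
Your proposal is correct and follows essentially the same route as the paper: the identity $\Xi_{\Omega(M)}=\chi^{}_{\Lambda}-\Xi_M$ is obtained from the lifted projective cover, with the coefficient of $\chi^{}_{\Lambda}$ in $\chi_{P(M)}$ pinned down to $1$ via the fact (from \cite[Theorem A.1]{HL19}) that the head of a non-hook trivial source module has exactly one constituent adjacent to the exceptional vertex, and then one iterates and uses the $\Omega^2$-orbit description for the final claim. The step you flag as the main obstacle is exactly the point where the paper invokes that theorem, so no gap remains.
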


\begin{proof}
As recalled at the beginning of the section $\Omega^{2a+1}(M)$ is a cotrivial source $\bB$-module for each $0\leq a <e$ and $\Omega^{2a}(M)$ is a trivial source $\bB$-module for each $0\leq a <e$. 
Since $M$ is not a hook, by \cite[Theorem A.1]{HL19}, the head of $M$ has exactly one constituent corresponding to a simple  $\bB$-module $E$ labelling an edge  of $\sigma(\bB)$ adjacent to the exceptional vertex. Therefore the multiplicity of $P(E)$ as a direct summand of $P(M)$ is one and 
$$\chi_{P(M)}=\Theta + \chi^{}_{\Lambda}\,,$$
where by Lemma~\ref{lem:PIMhooksSimples} all the irreducible constituents of $\Theta$ are in $\Irr'(\bB)$.\par Now if $\begin{tikzcd}[cramped, sep=small] \Omega(M) \arrow[r, hook] & P(M) \arrow[r, two heads] & M  \end{tikzcd}$ is a projective cover of $M$, then $\begin{tikzcd}[cramped, sep=small] \wh{\Omega(M)} \arrow[r, hook] & P(\wh{M}) \arrow[r, two heads] & \wh{M}  \end{tikzcd}$ is a projective cover of $\wh{M}$. It follows that in the Grothendieck group of $KG$ we have 
$$\chi_{\Omega(M)}=\chi_{P(M)}-\chi_M= \Theta + \chi^{}_{\Lambda}- \Psi_M-\Xi_M\,,$$
hence $\Xi_{\Omega(M)}=\chi^{}_{\Lambda}-\Xi_{M}$. The same argument applied to $\Omega(M)$ yields  $\Xi_{\Omega^2(M)}=\chi^{}_{\Lambda}+\Xi_{M}$ and the first claim follows by iteration of this argument. \par
The second claim is then straightforward{\color{black},} 
because $D_i$ is a common vertex of $M$ and $N$, there exists an integer  $1\leq a < e$ such that $N\cong \Omega^{2a}(M)$. 
\end{proof}

With these general results, we can proceed in the next four sections in  four successive steps to recover the characters of the trivial source $\bB$-modules in the general case.



\vspace{8mm}
\section{Step 1: Characters of the Morita correspondents in $kD$}\label{sec:kD}

In this section, we compute the $K$-characters of the Morita correspondents  in $kD$ of the trivial source $\bc$-modules, that is of the modules $U_Q(W)$ ($1< Q\leq D$). To achieve this aim, in an intermediary step, we describe the character of the capped endo-permutation $kD$-module $W$. 
\bigskip

\subsection{Representation theory of $D$}

The representation theory of $kD$ is well-known. In particular, $kD$ has finite representation type. Letting $u$ denote a generator of $D$, there is a $k$-algebra isomorphism $kD\cong k[X]/(X-1)^{p^n}$ mapping $u \mapsto \overline{X}:=X+(X-1)^{p^n}$, and for $1\leq r\leq p^n$ the module $M_r:=k[X]/(X-1)^r$ is the unique indecomposable $kD$-module of $k$-dimension $r$. In fact, these form a complete set of representatives of the isomorphism classes of  indecomposable  $kD$-modules, and are all uniserial. We refer the reader to \cite[Exercises 17.2 and 28.3]{ThevenazBook} for further details.
\par Similarly all indecomposable modules over all subgroups and quotients of $D$ are parametrised by their $k$-dimension. Thus,  when the module structure is clear from the context, we use the same notational conventions for quotients and subgroups of $D$ as for $D$ itself. \cite[Exercises 17.2 and 28.3]{ThevenazBook} in particular tell us that (endo-permutation) $kD$-modules can be understood inductively from proper subgroups making repetitive use of the Heller operator and inflation. Now, clearly, if $0\leq i\leq n-1$ and $1\leq r\leq p^{n-i}-1$, then $D_i=\langle u^{p^{n-i}}\rangle$ acts trivially on $M_r$, so that $M_r$ may be considered as $k[D/D_i]$-module, namely by abuse of notation we may write $M_r=\Inf_{D/D_i}^D(M_r)$.

\begin{nota}\label{character-N}
We let $\zeta\in K^\times$ denote a primitive $p^n$-th root of unity in $K$.  Then
$$\Irr_K(D)=\{ \lambda^D_\kappa:D=\langle u\rangle \lra K^{\times}, u\mapsto \zeta^\kappa\mid \kappa\in\IZ\text{ and } 0\leq \kappa\leq p^{n}-1\}\,.$$
Then $\lambda^D_0=1_D$ is the unique {\it non-exceptional} $K$-character of~$D$ and 
$$\{\lambda^D_{\kappa}\mid 1\leq \kappa\leq p^{n}-1\}=\Irr_{\Exc}(kD)$$
(see e.g. \cite{Dad66}).  Clearly (see  e.g. Lemma~\ref{lem:PIMhooksSimples}(a)) the projective indecomposable module $kD$ affords the $K$-character 
$$\chi_{kD}=\sum_{\kappa=0}^{p^n-1}\lambda_{\kappa}^{D}\,.$$
\end{nota}

\begin{rem}\label{rem:inf}
Given $0\leq i\leq n-1$, the character $\lambda^D_\kappa$ may be seen as inflated from a character of $D/D_i$ if and only if $D_i\leq\ker\lambda_\kappa^{D}$.   
Thus,
\begin{equation*}
\begin{split}
   \Inf_{D/D_i}^D \left(\Irr_K(D/D_i)\right)  & =\{ \Inf_{D/D_i}^D (\lambda_{\nu}^{D/D_i})\mid 0\leq \nu\leq p^{n-i}-1 \}  \\
                                                               & = \{ \lambda^D_\kappa\mid 0\leq \kappa\leq p^n-1 \text{ and }p^i|\kappa \}\,.                   
\end{split}
\end{equation*} 
\end{rem}

\vspace{2mm}
\subsection{Character of the endo-permutation $kD$-module $W$}\label{ssec:cappedEP}

In view of \S\ref{ssec:tscyclicblocks} and \S\ref{ssec:PuigForBlockc}, we first need to describe the $K$-characters of the capped endo-permutation $\cO D$-lattices of determinant 1 lifting a module of the form $W_D(a_0,\ldots,a_{n-1})$ with $a_0=0$. We recall that given an $\cO D$-lattice~$L$, we may consider the composition of the underlying representation of $D$ with the determinant homomorphism  $\det:\text{GL}(L)\lra \cO^{\times}$. This is a linear character of~$D$, called the \emph{determinant} of $L$.  If 
this character is the trivial character, then it is  said that~$L$ is an $\cO D$-lattice \emph{of determinant}~{\color{black}1}.\par

\begin{lem}\label{lem:det1}{\ }
\begin{enumerate}
\item[\rm(a)] Any permutation $\cO D$-lattice  has determinant $1$.
\item[\rm(b)] If $N$ is an indecomposable capped endo-permutation $kD$-module, then $N$ is liftable to an $\cO D$-lattice, and amongst all possible lifts of $N$ there is  a unique lift $\wt{N}$ with determinant~1. 
\end{enumerate}
\end{lem}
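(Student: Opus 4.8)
The plan is to prove (a) directly and deduce (b) from (a) together with standard lifting theory for endo-permutation modules. For part (a), let $L$ be a permutation $\cO D$-lattice, so $L$ has an $\cO$-basis $X$ permuted by $D$. For any $g\in D$, the matrix of the action of $g$ on $L$ with respect to $X$ is a permutation matrix, whose determinant is $\pm 1$; since $g$ has $p$-power order and $p$ is odd, the determinant must be $+1$ (the sign of a permutation of $p$-power order is $+1$, as its cycle type consists of cycles of $p$-power length, each of which is an even permutation since $p^j$ is odd). Hence $\det(L)$ is the trivial character. This takes care of (a).

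For part (b), first note that $N$ is a capped endo-permutation $kD$-module, hence in particular a $p$-permutation ($=$ trivial source, after capping) module? — no, that is false in general, so instead I would invoke the known liftability of endo-permutation modules over a $p$-group: by Dade's classification (\cite{DADE78a,DADE78b}, see also \cite{TheSurvey}) every endo-permutation $kD$-lattice over the $p$-group $D$ is liftable to an endo-permutation $\cO D$-lattice, and the indecomposable capped summand $\Capp(N)$ lifts to an indecomposable capped endo-permutation $\cO D$-lattice. (Alternatively, since $D$ is cyclic here, one can write $N$ explicitly via Notation \ref{nota:W} as an iterated relative Heller translate of $k$ and lift each Heller operator and inflation step, using that $\Omega_{D/Q}$ and $\Inf$ are defined integrally.) Fix one such lift $L_0$ of $N$. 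Its determinant $\delta:=\det(L_0)$ is a linear character $D\to\cO^\times$; since $D$ is a $p$-group and $|\cO^\times|$-torsion of $p$-power order consists of $p^n$-th roots of unity lying in $\cO$ (as $K$ is large enough), $\delta$ is a linear character of $p$-power order. Because $p$ is odd, $\delta$ has a unique "square root": the map $x\mapsto \delta(x)^{(p^n+1)/2}$ (more precisely, raising to the inverse of $2$ modulo the order of $\delta$) defines a linear character $\mu$ of $D$ with $\mu^2=\delta$. Then twist: set $\wt N:=\mu^{-1}\otimes_{\cO}L_0$. One checks $\wt N$ is again an $\cO D$-lattice lifting $N$ (twisting by a linear character is invisible mod $\fp$ only if $\mu\equiv 1$; instead one argues that $k\otimes_{\cO}(\mu^{-1}\otimes L_0)\cong N$ because $\mu$ reduces to the trivial character mod $\fp$, as the $p$-power roots of unity in $\cO$ are congruent to $1$ mod $\fp$ — here $p$ odd is again used, but in fact any $p$-power root of unity is $\equiv 1$). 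And $\det(\wt N)=\mu^{-\dim N}\cdot\delta$; this forces choosing $\mu$ so that $\mu^{\dim N}=\delta$, which is possible when $\gcd(\dim N,p)$ is coprime to the order of $\delta$ — and indeed $\dim N=\ell_n$ is prime to $p$ by the formula in Notation \ref{nota:W} (an endo-permutation $kD$-module has $k$-dimension prime to $p$), so $\dim N$ is invertible modulo the $p$-power order of $\delta$ and a unique such $\mu$ exists. This gives existence of a lift of determinant $1$.

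For uniqueness, suppose $\wt N_1,\wt N_2$ are two lifts of $N$ with determinant $1$. By the uniqueness part of the lifting theory for (capped) endo-permutation $\cO D$-lattices — two lifts of the same endo-permutation $k$-module differ by a twist by a linear character of $D$ (this is the statement that $H^1$ of the relevant obstruction/deformation is controlled by $\Hom(D,1+\fp)$, cf. \cite{TheSurvey} and the discussion preceding \cite[Theorem 2.7]{Linck96}) — we have $\wt N_2\cong\nu\otimes_{\cO}\wt N_1$ for some linear character $\nu:D\to 1+\fp$, necessarily of $p$-power order. Comparing determinants: $1=\det(\wt N_2)=\nu^{\dim N}\cdot\det(\wt N_1)=\nu^{\dim N}$. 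Since $\dim N=\ell_n$ is prime to $p$ and $\nu$ has $p$-power order, $\nu^{\dim N}=1$ forces $\nu=1$, whence $\wt N_1\cong\wt N_2$.

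\medskip

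The step I expect to be the main obstacle is the rigorous invocation of the lifting-and-uniqueness theory for endo-permutation modules in the precise form "lifts exist and any two differ by a twist by a linear character reducing to $1$"; in the cyclic case this can be made self-contained by building $\wt N$ explicitly from Notation \ref{nota:W} via integral relative Heller operators and inflations, and checking at each step that the lift is unique up to such a twist, which is where the bookkeeping lives. The arithmetic points (sign of a $p$-power permutation is $+1$; $p$-power roots of unity in $\cO$ are $\equiv 1\bmod\fp$; $\dim N$ prime to $p$ so invertible mod the order of $\delta$) are all routine consequences of $p$ being odd and $K$ being large enough, and I would state them briefly rather than belabor them.
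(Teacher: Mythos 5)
Your proof is correct and follows essentially the same route as the paper, which simply cites \cite[Lemma 3.3(a)]{LT19} for (a) (the sign of a permutation of $p$-power order, $p$ odd) and \cite[(28.1)]{ThevenazBook} for (b) (lifts of an indecomposable capped endo-permutation $kP$-module form a torsor under $\Hom(P,1+\fp)$, exactly one having determinant $1$, using that $\dim_kN$ is prime to $p$); your argument is an unpacking of those two citations, with only cosmetic noise (the irrelevant square-root aside and the self-corrected ``$p$-permutation'' false start).
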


\begin{proof}
\begin{enumerate}
\item[\rm(a)] This holds because $p$ is odd. See \cite[Lemma 3.3(a)]{LT19}.
\item[\rm(b)] It is well-known that all modules belonging to a cyclic block with inertial index 1 are liftable. The claim about the determinant holds by \cite[(28.1)]{ThevenazBook}. 
\end{enumerate}
\end{proof}

\begin{nota}
If $N$ is an indecomposable capped endo-permutation $kD$-module, then we denote by $\chi_N$ the $K$-character of its  unique lift of determinant $1$. 
Notice that the unique indecomposable capped endo-permutation $kD$-module which is also a trivial source module is the trivial module~$k$. Its trivial source lift is the trivial $\cO D$-lattice $\cO$, which obviously has determinant 1, hence the above notation agrees with the notation chosen for the character of the trivial source lift.
\end{nota}

\begin{lem}\label{lem:MD/Di}
If $\bB=kD$, then there is a unique trivial source module with vertex $D_i$ for each $1\leq i\leq n$, namely $\Ind_{D_i}^{D}(k)=\Inf_{D/D_i}^{D}(k[D/D_i])=M_{|D/D_i|}$, which we may also see as the permutation $kD$-module $k[D/D_i]$ with stabiliser $D_i$.
\begin{enumerate}
\item[\rm(a)] The trivial source lift of $k[D/D_i]$ is $\cO[D/D_i]$ and has determinant $1$.
\item[\rm(b)] We have
$$\chi_{M_{|D/D_i|}}=\sum_{\substack{0\leq \kappa \leq  p^n -1\\ p^i | \kappa}}  \lambda^D_\kappa\,.$$
\end{enumerate}
\end{lem}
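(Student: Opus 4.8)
The statement to prove is Lemma~\ref{lem:MD/Di}: when $\bB = kD$, for each $1 \leq i \leq n$ the unique trivial source module with vertex $D_i$ is $\Ind_{D_i}^D(k) = \Inf_{D/D_i}^D(k[D/D_i]) = M_{|D/D_i|}$, its trivial source lift is $\cO[D/D_i]$ (which has determinant $1$), and its character is $\sum_{p^i \mid \kappa}\lambda_\kappa^D$. I would first dispose of the classification claim: $kD$ is a nilpotent block with inertial index $e = 1$, so by the discussion in \S\ref{ssec:tscyclicblocks} there is exactly one trivial source module with vertex $D_i$ up to isomorphism (the $\Omega^2$-orbit has size $e = 1$). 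Since $\Ind_{D_i}^D(k)$ is a permutation module on the transitive $D$-set $D/D_i$, it is indecomposable exactly when $D/D_i$ has a unique fixed point structure — here $D$ is cyclic, so $\Ind_{D_i}^D(k)$ is indecomposable with vertex $D_i$ and trivial source; being $|D/D_i|$-dimensional and uniserial (every $kD$-module is), it equals $M_{|D/D_i|}$. The identification $\Ind_{D_i}^D(k) = \Inf_{D/D_i}^D(k[D/D_i])$ is immediate since $D_i \trianglelefteq D$.

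For part~(a), the trivial source lift of $k[D/D_i]$ must be a permutation $\cO D$-lattice lifting it, and $\cO[D/D_i]$ is such a lattice; by uniqueness of the trivial source lift (\S\ref{ssec:tscots}) it is $\widehat{M_{|D/D_i|}} = \cO[D/D_i]$. That it has determinant $1$ is then Lemma~\ref{lem:det1}(a), since it is a permutation $\cO D$-lattice and $p$ is odd. This also reconciles the two notations $\chi_{M_{|D/D_i|}}$ (trivial source lift) and the determinant-$1$-lift convention, as already noted in the Notation block preceding the lemma.

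For part~(b), the character of $\cO[D/D_i]$ is, by definition of the permutation module / Frobenius reciprocity, $\Ind_{D_i}^D(1_{D_i})$, whose constituents are precisely the characters $\lambda \in \Irr_K(D)$ that contain $1_{D_i}$ on restriction to $D_i$ — i.e. those with $D_i \leq \ker\lambda$ — each with multiplicity $1$ (since $D$ is abelian, $\langle \Ind_{D_i}^D 1_{D_i}, \lambda\rangle_D = \langle 1_{D_i}, \Res^D_{D_i}\lambda\rangle_{D_i} \in \{0,1\}$). By Remark~\ref{rem:inf}, the set $\{\lambda_\kappa^D : D_i \leq \ker\lambda_\kappa^D\}$ is exactly $\{\lambda_\kappa^D : 0 \leq \kappa \leq p^n - 1,\ p^i \mid \kappa\}$. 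Hence $\chi_{M_{|D/D_i|}} = \sum_{p^i \mid \kappa}\lambda_\kappa^D$, as claimed. Alternatively one sees the same count directly from $\chi_{M_{|D/D_i|}} = \Inf_{D/D_i}^D(\chi_{\cO[D/D_i]}) = \Inf_{D/D_i}^D\big(\sum_{\kappa=0}^{p^{n-i}-1}\lambda_\kappa^{D/D_i}\big)$ using the character of the regular representation of $D/D_i$ (Notation~\ref{character-N} applied to $D/D_i$) and Remark~\ref{rem:inf}.

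**Main obstacle.** There is no serious obstacle here; this is a bookkeeping lemma assembling the representation theory of cyclic $p$-groups recalled in \S4.1 with the general facts about trivial source lifts. The only point requiring a little care is the \emph{uniqueness} assertion in the classification part — that $\Ind_{D_i}^D(k)$ is the \emph{only} trivial source $kD$-module with vertex $D_i$ — which follows from the $e = 1$ specialization of the $\Omega^2$-orbit description in \S\ref{ssec:tscyclicblocks}, together with the fact that any trivial source module with vertex $D_i$ is a summand of $\Ind_{D_i}^D(k)$ and the latter is indecomposable because $D$ is cyclic.
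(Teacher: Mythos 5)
Your proposal is correct and follows essentially the same route as the paper: indecomposability of the transitive permutation module $\Ind_{D_i}^{D}(k)$ gives uniqueness, the trivial source lift is $\cO[D/D_i]$ with determinant $1$ by Lemma~\ref{lem:det1}(a), and the character is obtained by identifying the constituents with the $\lambda^D_\kappa$ having $D_i$ in their kernel via Remark~\ref{rem:inf}. Your primary computation for (b) via Frobenius reciprocity is only a cosmetic variant of the paper's inflation of the regular character of $D/D_i$, which you yourself note as the alternative.
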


\begin{proof}
The module  $\Ind_{D_i}^{D}(k)$ is indecomposable, hence is the unique trivial source $kD$-module with vertex $D_i$ and has diemnsion $|D/D_i|$. It is also clear that $\Ind_{D_i}^{D}(k)$ is the inflation from $D/D_i$ to $D$ of the projective indecomposable $k[D/D_i]$-module $k[D/D_i]$.
Now the trivial source lift of $k[D/D_i]$ is $\Ind_{D_i}^{D}(\cO)=\cO[D/D_i]$, which has determinant 1 by Lemma~\ref{lem:det1}(a). 
Hence, it follows from the above and Remark~\ref{rem:inf} that
$$\chi_{M_{|D/D_i|}}=\chi_{\Inf_{D/D_i}^{D}(k[D/D_i])}=\Inf_{D/D_i}^{D}
\left(     \sum_{\kappa=0}^{|D/D_i|-1}\lambda_{\kappa}^{D/D_i}
\right)  =\sum_{\substack{0\leq \kappa \leq  p^n -1\\ p^i | \kappa}}  \lambda^D_\kappa\,.$$
\end{proof}

Now we recall that $M$ is an endo-permutation $kP$-module if and only if $\Omega_{P/Q}(M)$ is an endo-permutation $kP$-module. (See \cite{TheSurvey}.)\\

\begin{lem}\label{lem:OmegaD/Di}
Let $0\leq i\leq n-1$ and $1\leq r\leq p^{n-i}-1$. Then the following holds:
\begin{enumerate}
\item[\rm(a)] $\Omega_{D/D_i}(M_r)= \Inf_{D/Di}^D\left(\Omega(M_r)\right)=M_{|D/D_i|-r}$;
\item[\rm(b)] Let $N$ be an indecomposable capped endo-permutation $kD$-module and let $\wt{N}$ denote its unique lift  with determinant $1$. If $1\leq i\leq n$ and $\dim_k(N)\leq p^{n-i}-1$, then  $\Omega_{D/D_i}(\wt{N})$ is the unique lift of determinant $1$ of $\Omega_{D/D_i}(N)$. 
\item[\rm(c)] $\Omega_{D/D_i}(k) = M_{|D/D_i|-1}$, its lift of determinant 1 is $\Omega_{D/D_i}(\cO)$ and affords the $K$-character  
$$\chi_{\Omega_{D/D_i}(k)}
 =\Big( \sum_{\substack{0\leq \kappa \leq  p^n -1\\ p^i | \kappa}}\lambda^D_\kappa\; 
    \Big) \ - \ \lambda^D_0\ = \sum_{\substack{1\leq \kappa \leq  p^n -1\\ p^i | \kappa}}\lambda^D_\kappa\,.$$
\end{enumerate}
\end{lem}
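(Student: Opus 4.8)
The plan is to prove the three items of Lemma~\ref{lem:OmegaD/Di} in order, reducing everything to the inflation identity in~(a), which is itself a direct consequence of the representation theory of $kD$ recalled above together with the description of $\Omega_{D/D_i}$ via a relative projective cover.

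For~(a), I would argue as follows. Fix $0\le i\le n-1$ and $1\le r\le p^{n-i}-1$, so that $M_r=\Inf_{D/D_i}^D(M_r)$ is inflated from $D/D_i\cong C_{p^{n-i}}$. A $D_i$-relative projective cover of $M_r$ is the same as an ordinary projective cover of $M_r$ regarded as a $k[D/D_i]$-module, inflated back to $D$; indeed $D_i$-relative projectivity over $kD$ for $D_i$-trivial modules coincides with projectivity over $k[D/D_i]$, and the projective cover is preserved by inflation. Since $k[D/D_i]\cong k[Y]/(Y-1)^{p^{n-i}}$ is a local uniserial ring, the projective cover of $M_r=k[Y]/(Y-1)^r$ over $k[D/D_i]$ is $k[D/D_i]$ itself with kernel $(Y-1)^r k[D/D_i]\cong M_{p^{n-i}-r}$. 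Inflating, $P_{D/D_i}(M_r)=\Inf_{D/D_i}^D(k[D/D_i])=M_{|D/D_i|}$ and hence $\Omega_{D/D_i}(M_r)=\Inf_{D/D_i}^D(\Omega(M_r))=M_{|D/D_i|-r}$, which is the claim. (Strictly one should note $|D/D_i|-r\ge 1$ by the hypothesis $r\le p^{n-i}-1$, so this module is nonzero.)

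For~(b), let $N$ be indecomposable capped endo-permutation with $\dim_k N\le p^{n-i}-1$ and $\wt N$ its determinant-$1$ lift (Lemma~\ref{lem:det1}(b)). First, $N$ is $D_i$-trivial: any $kD$-module of dimension $<p^{n-i}=|D/D_i|$ must be annihilated by $(u^{p^{n-i}}-1)$ since the smallest faithful $kD_i$-quotient has dimension $p^i>1$ — more simply, the unique indecomposable of dimension $\le p^{n-i}-1$ is $M_{\dim N}$, which is $D_i$-trivial by the discussion preceding Notation~\ref{character-N}. So $\wt N$ lifts $N$ over $\cO[D/D_i]$, and taking a $\cO[D/D_i]$-projective cover commutes with reduction mod $\fp$ and with inflation; thus $\Omega_{D/D_i}(\wt N)=\Inf_{D/D_i}^D\big(\Omega(\wt N)\big)$ reduces mod $\fp$ to $\Omega_{D/D_i}(N)$. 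It remains to check the determinant. By (28.1) of \cite{ThevenazBook} the lift of determinant~$1$ of an endo-permutation module is unique, so it suffices to show $\det(\Omega_{D/D_i}(\wt N))=1$. Over $\cO[D/D_i]$ we have a short exact sequence $0\to\Omega(\wt N)\to P\to\wt N\to 0$ with $P$ the projective cover; $P$ is a permutation $\cO[D/D_i]$-lattice (a free $\cO[D/D_i]$-lattice, or directly $\cO[D/D_i]$ itself), hence has determinant $1$ by Lemma~\ref{lem:det1}(a) and inflation preserves this. Since determinants are multiplicative over short exact sequences of $\cO D$-lattices, $\det(\Omega_{D/D_i}(\wt N))\cdot\det(\wt N)=\det(\Inf P)=1$, and $\det(\wt N)=1$ by choice, giving $\det(\Omega_{D/D_i}(\wt N))=1$.

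For~(c), apply~(a) with $N=k=M_1$ (note $1\le p^{n-i}-1$ for $i\le n-1$) to get $\Omega_{D/D_i}(k)=M_{|D/D_i|-1}$, and apply~(b) with $N=k$, $\wt N=\cO$: its determinant-$1$ lift is $\Omega_{D/D_i}(\cO)$. For the character, $\Omega_{D/D_i}(\cO)$ sits in $0\to\Omega_{D/D_i}(\cO)\to\Inf_{D/D_i}^D(\cO[D/D_i])\to\cO\to 0$, so in the Grothendieck group of $KD$ we have $\chi_{\Omega_{D/D_i}(k)}=\chi_{\Inf_{D/D_i}^D(\cO[D/D_i])}-\chi_{\cO}=\chi_{M_{|D/D_i|}}-\lambda_0^D$. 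By Lemma~\ref{lem:MD/Di}(b) this equals $\big(\sum_{0\le\kappa\le p^n-1,\ p^i\mid\kappa}\lambda_\kappa^D\big)-\lambda_0^D=\sum_{1\le\kappa\le p^n-1,\ p^i\mid\kappa}\lambda_\kappa^D$, as claimed.

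The main obstacle is the determinant bookkeeping in~(b): one must be careful that the relative projective cover used to define $\Omega_{D/D_i}$ genuinely has determinant~$1$ (this is where Lemma~\ref{lem:det1}(a) and the oddness of $p$ enter) and that reduction mod~$\fp$, inflation, and taking projective covers all commute, so that $\Omega_{D/D_i}(\wt N)$ reduces to $\Omega_{D/D_i}(N)$ on the nose rather than up to a projective summand — here the smallness hypothesis $\dim_k N\le p^{n-i}-1$ is exactly what forces $N$ to be inflated from $D/D_i$ and makes the relative and ordinary Heller operators over the quotient coincide. Parts~(a) and~(c) are then essentially formal.
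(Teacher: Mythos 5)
Your argument follows the paper's proof essentially step for step: (a) via the observation that the $D_i$-relative projective cover of $M_r$ is the inflation of the ordinary projective cover over $k[D/D_i]$, (b) via multiplicativity of determinants along the lifted relative projective cover with middle term $\cO[D/D_i]$ (of determinant $1$ by Lemma~\ref{lem:det1}(a)), and (c) by specialising to $k=M_1$ and computing in the Grothendieck group using Lemma~\ref{lem:MD/Di}(b).

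The one place you diverge is the route into (b): you pass to the quotient group by asserting that, since $N$ is inflated from $D/D_i$, "$\wt N$ lifts $N$ over $\cO[D/D_i]$", i.e.\ that $D_i$ also acts trivially on the determinant-one lift $\wt N$. This is not automatic: an $\cO D$-lattice can reduce to a $D_i$-trivial $kD$-module without being $D_i$-trivial itself, since the congruence kernel $1+\fp\,\mathrm{M}_d(\cO)$ may contain $p$-torsion, and distinct lifts of $N$ differ by twists with linear characters $D\to 1+\fp$ that need not be trivial on $D_i$. The claim is true — some lift of $N$ is inflated from $D/D_i$, all lifts are rank-one twists of it, and since $\dim_k N$ is prime to $p$ the determinant-one lift is the twist by a character of $D/D_i$ — but it needs this (or some) justification. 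The paper sidesteps the issue entirely by lifting the $D_i$-relative projective cover directly over $\cO D$ (citing \cite[(3.6)]{GreenWalk}), so that the middle term is $\cO[D/D_i]$ without ever needing $\wt N$ itself to be inflated; you may either adopt that route or insert the twisting argument above. Everything else, including the determinant bookkeeping and part (c), is correct and matches the paper.
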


\begin{proof}
\begin{enumerate}
\item[\rm(a)] 
Let $M_r$ be the unique $k[D/D_i]$-module of dimension $r$ and let 
$$\begin{tikzcd}[cramped, sep=small]
 0\arrow[r] & \Omega(M_r) \ar[r] & P(M_r) \ar[r] & M_r\ar[r] & 0
 \end{tikzcd}$$
 be a projective cover of $M_r$. Because $D/D_i$ is a $p$-group and $M_r$ is uniserial, the head of $M_r$ is the trivial 
$k[D/D_i]$-module and it follows that  $P(M_r)=k[D/D_i]$, i.e. the unique projective indecomposable $k[D/D_i]$-module. Moreover,  $\Omega(M_r)$ is indecomposable because $M_r$ is indecomposable. Therefore, taking inflation to $D$ yields a $D_i$-relative projective cover of $M_r$ seen as a $kD$-module
 $$\begin{tikzcd}[cramped, sep=small]
 0\arrow[r] & \Inf_{D/D_i}^D\left(\Omega(M_r)\right) \ar[r] & \Inf_{D/D_i}^D\left(P(M_r)\right) \ar[r] & \Inf_{D/D_i}^D(M_r)\ar[r] & 0
 \end{tikzcd}$$
 since $\Inf_{D/D_i}^D(M_r)=M_r$. Thus, $P_{D/D_i}(M_r)=\Inf_{D/D_i}^D\left(P(M_r)\right)=k[D/D_i]$, i.e. the indecomposable permutation $kD$-module with stabiliser $D_i$, and 
 $$\Omega_{D/D_i}(M_r)= \Inf_{D/D_i}^D\left(\Omega(M_r)\right)\,.$$ 
 Moreover, 
 $$\dim_k(\Omega_{D/D_i}(M_r))= \dim_k k[D/D_i]-\dim_k M_r = |D/D_i|-r\,.$$
\item[\rm(b)] For the second claim, let 
$$\begin{tikzcd}[cramped, sep=small] 0\arrow[r] & \Omega_{D/D_i}(N) \ar[r] &P_{D/D_i}(N)  \ar[r] & N\ar[r] & 0\end{tikzcd}$$
be a $D_i$-relative projective cover of $N$. Then, by the arguments of the proof of (a),  $P_{D/D_i}(N)=k[D/D_i]$ is a permutation $kD$-module and this short exact sequence lifts to a $D_i$-relative projective cover of $\wt{N}$:
$$\begin{tikzcd}[cramped, sep=small] 0\arrow[r] & \Omega_{D/D_i}(\wt{N}) \ar[r] &\cO[D/D_i]   \ar[r] & \wt{N}\ar[r] & 0\end{tikzcd}$$
(see e.g. \cite[(3.6)]{GreenWalk}). 
But then for each $g\in D$, by Lemma~\ref{lem:MD/Di}(a) and the assumption that $\wt{N}$ has determinant $1$, we have
$$\det(g, \Omega_{D/D_i}(\wt{N})) \underbrace{\det(g,\wt{N})}_{=1} = \det(g,\cO[D/D_i]) =1 \,,$$
hence $\det(g, \Omega_{D/D_i}(\wt{N}))=1$, as \smallskip required.

\item[\rm(c)] The first claim follows from (a) since $k=M_1$. The second claim holds by (b). For the third claim, we consider again the $D_i$-relative projective cover of the trivial $\cO D$-lattice
 $$\begin{tikzcd}[cramped, sep=small]
 0\arrow[r] & \Omega_{D/D_i}(\cO) \ar[r] & \cO[D/D_i] \ar[r] &\cO\ar[r] & 0\,.
 \end{tikzcd}$$
Thus, computing in the Grothendieck ring of $KD$, we obtain that the $K$-character afforded by the lift of determinant $1$ of $\Omega_{D/D_i}(k)$ is 
$$\chi_{\Omega_{D/D_i}(k)}=\chi_{k[D/D_i]}-\chi_{k}=\big(\sum_{\substack{0\leq \kappa \leq  p^n -1\\ p^i | \kappa}}\lambda^D_\kappa \big) \ - \ \lambda^D_0= \sum_{\substack{1\leq \kappa \leq  p^n -1\\ p^i | \kappa}}\lambda^D_\kappa\,,$$
where the second equality holds by Lemma~\ref{lem:MD/Di}(b).
\end{enumerate}
\end{proof}

\begin{prop}\label{prop:charW_D}
Let $\mathcal{W}:=
\Omega_{D/D_{i(0)}}\circ\Omega_{D/D_{i(1)}}\circ\cdots\circ\Omega_{D/D_{i(s)}}(k)$ be an indecomposable capped endo-permutation $kD$-module, where $s\geq 0$ and  
$0 \leq i(0) < i(1) < \cdots <i(s) \leq n-1$ are integers and we set $s=-1$ if $\mathcal{W}=k$.
Then, in the Grothendieck ring of $KD$, the ordinary $K$-character afforded by the lift of determinant~1 of $\mathcal{W}$ is 
$$
    \chi_\mathcal{W}=\sum_{j=0}^s (-1)^j\Big(   \sum_{\substack{0\leq \kappa\leq p^n -1\\p^{i(j)}|\kappa}}\lambda_\kappa^{D}  
                                                                       \Big) \ + \ (-1)^{s+1}\lambda_0^{D} \,.
$$
\end{prop}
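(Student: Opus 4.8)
The plan is to prove Proposition~\ref{prop:charW_D} by induction on $s$, peeling off the outermost relative Heller operator $\Omega_{D/D_{i(0)}}$ and applying Lemma~\ref{lem:OmegaD/Di}. The base case $s=-1$ is the statement $\chi_k = \lambda_0^D$, which is immediate; the case $s=0$ is exactly Lemma~\ref{lem:OmegaD/Di}(c). For the inductive step, write $\mathcal{W}' := \Omega_{D/D_{i(1)}}\circ\cdots\circ\Omega_{D/D_{i(s)}}(k)$, so that $\mathcal{W} = \Omega_{D/D_{i(0)}}(\mathcal{W}')$. The key structural point is that, as in the proof of Lemma~\ref{lem:OmegaD/Di}(a)--(b), a $D_{i(0)}$-relative projective cover of $\mathcal{W}'$ has the form
$$\begin{tikzcd}[cramped, sep=small] 0\arrow[r] & \Omega_{D/D_{i(0)}}(\wt{\mathcal{W}'}) \ar[r] &\cO[D/D_{i(0)}]^{\oplus t}   \ar[r] & \wt{\mathcal{W}'}\ar[r] & 0\end{tikzcd}$$
for a suitable multiplicity $t = t(\mathcal{W}')$, where $\wt{\mathcal{W}'}$ denotes the lift of determinant $1$ of $\mathcal{W}'$ and the middle term is a permutation $\cO D$-lattice (hence of determinant $1$ by Lemma~\ref{lem:det1}(a)); the lift of determinant $1$ of $\mathcal{W}$ is then $\Omega_{D/D_{i(0)}}(\wt{\mathcal{W}'})$ by the determinant computation in the proof of Lemma~\ref{lem:OmegaD/Di}(b). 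Computing in the Grothendieck ring of $KD$ gives
$$\chi_{\mathcal{W}} = t\cdot \chi_{\cO[D/D_{i(0)}]} - \chi_{\mathcal{W}'} = t\cdot\Big(\sum_{\substack{0\leq \kappa\leq p^n-1\\ p^{i(0)}\mid\kappa}}\lambda_\kappa^D\Big) - \chi_{\mathcal{W}'},$$
using Lemma~\ref{lem:MD/Di}(b) for the character of $\cO[D/D_{i(0)}]$.

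The remaining work is to identify $t$ and then to check that substituting the inductive formula for $\chi_{\mathcal{W}'}$ produces the claimed alternating sum for $\chi_{\mathcal{W}}$. Since $\cO[D/D_{i(0)}]$ is indecomposable (it inflates the projective indecomposable $k[D/D_{i(0)}]$-module, which is the unique one up to isomorphism), a $D_{i(0)}$-relative projective cover of $\wt{\mathcal{W}'}$ uses $t$ copies of it, and $t$ is determined by dimensions: the relative projective cover over $k$ gives $t\cdot p^{n-i(0)} = \dim_k(\mathcal{W}') + \dim_k\Omega_{D/D_{i(0)}}(\mathcal{W}')$. The point I need is simply $t=1$, which holds because $\mathcal{W}'$, being an indecomposable module over the quotient algebra $k[D/D_{i(0)}]$ (note $i(0) < i(1) < \cdots$, so $D_{i(0)}$ acts trivially on $\mathcal{W}'$ exactly as in the proofs cited), is uniserial with trivial head, so its projective cover over $k[D/D_{i(0)}]$ is a single copy of $k[D/D_{i(0)}]$; inflating preserves this. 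With $t=1$ in hand, plug in
$$\chi_{\mathcal{W}'} = \sum_{j=1}^s(-1)^{j-1}\Big(\sum_{\substack{0\leq\kappa\leq p^n-1\\ p^{i(j)}\mid\kappa}}\lambda_\kappa^D\Big) + (-1)^{s}\lambda_0^D$$
(the $s-1$ case of the proposition, with the index shifted) and observe that
$$\chi_{\mathcal{W}} = \Big(\sum_{\substack{p^{i(0)}\mid\kappa}}\lambda_\kappa^D\Big) - \sum_{j=1}^s(-1)^{j-1}\Big(\sum_{\substack{p^{i(j)}\mid\kappa}}\lambda_\kappa^D\Big) - (-1)^s\lambda_0^D = \sum_{j=0}^s(-1)^j\Big(\sum_{\substack{0\leq\kappa\leq p^n-1\\ p^{i(j)}\mid\kappa}}\lambda_\kappa^D\Big) + (-1)^{s+1}\lambda_0^D,$$
which is the desired identity.

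The main obstacle is bookkeeping rather than conceptual: one has to be careful that the lift of determinant $1$ behaves well under iterated relative Heller operators — i.e. that $\Omega_{D/D_{i(0)}}$ applied to the determinant-$1$ lift of $\mathcal{W}'$ is again the determinant-$1$ lift of $\mathcal{W} = \Omega_{D/D_{i(0)}}(\mathcal{W}')$ — and that the dimension hypotheses in Lemma~\ref{lem:OmegaD/Di}(b) are met at each stage. For the latter: after applying $\Omega_{D/D_{i(1)}},\ldots,\Omega_{D/D_{i(s)}}$ one has $\dim_k(\mathcal{W}') \leq p^{n-i(0)}-1$ because $D_{i(0)}$ acts trivially on $\mathcal{W}'$ (all inner Heller operators are relative to subgroups $D_{i(j)}$ with $i(j) > i(0)$, and inflation from $D/D_{i(j)}$ factors through $D/D_{i(0)}$), so $\mathcal{W}'$ is a module over $k[D/D_{i(0)}]$, a uniserial algebra of dimension $p^{n-i(0)}$, whence a proper (non-projective, since $\mathcal{W}'$ has a Heller operator to take) submodule has dimension $< p^{n-i(0)}$. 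One should state this trivial-action observation explicitly at the start of the induction. Everything else is a finite computation in the Grothendieck ring, already modelled on the proof of Lemma~\ref{lem:OmegaD/Di}.
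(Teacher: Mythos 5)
Your proof is correct and follows essentially the same route as the paper's: induction on $s$, peeling off $\Omega_{D/D_{i(0)}}$, using that the $D_{i(0)}$-relative projective cover of the determinant-$1$ lift of $\mathcal{W}'$ is a single copy of $\cO[D/D_{i(0)}]$ (the paper gets your "$t=1$" from the dimension bound $\dim_k(\mathcal{W}')\leq p^{n-i(0)}-1$ fed into Lemma~\ref{lem:OmegaD/Di}), and then computing in the Grothendieck ring of $KD$. The extra care you take with the trivial action of $D_{i(0)}$ on $\mathcal{W}'$ and the compatibility of determinant-$1$ lifts with iterated relative Heller operators matches the paper's use of Lemma~\ref{lem:OmegaD/Di}(b)--(c).
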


\begin{proof}
We proceed by induction on $s$.
If $s=-1$, then $\mathcal{W}=k=M_1=M_{D/D_n}$, hence  $\chi_\mathcal{W}=\lambda_0^D$  by Lemma~\ref{lem:MD/Di}. If $s=0$, then $\mathcal{W}=\Omega_{D/D_{i(0)}}(k)$ and  by Lemma~\ref{lem:OmegaD/Di} we have
$$\chi_\mathcal{W}=\sum_{\substack{1\leq \kappa \leq  p^n -1\\ p^{i(0)} | \kappa}}\lambda^D_\kappa = \big(\sum_{\substack{0\leq \kappa \leq  p^n -1\\ p^{i(0)} | \kappa}}\lambda^D_\kappa\big)-\lambda_0^D\,.$$
Hence the formula holds for $s=-1$ and $s=0$. So let us assume that $s\geq 1$ and  set 
$$\mathcal{W}':=\Omega_{D/D_{i(1)}}\circ\cdots\circ\Omega_{D/D_{i(s)}}(k)$$
$r(\mathcal{W}')=\dim_k(\mathcal{W}')$.  Because $i(0) < i(1) < \cdots <i(s) \leq n-1$, we have $1\leq r(\mathcal{W}')\leq p^{n-i(0)}-1$, hence 
$$\mathcal{W}=\Omega_{D/D_{i(0)}}(\mathcal{W}')=M_{|D/D_{i(0)}|-r(\mathcal{W}')}$$
by Lemma~\ref{lem:OmegaD/Di}(a). 
Now, by Lemma~\ref{lem:OmegaD/Di}(c) and (b) (applied inductively), we obtain that 
$$\Omega_{D/D_{i(1)}}\circ\cdots\circ\Omega_{D/D_{i(s)}}(\cO)=:\widetilde{\mathcal{W}'}$$
 is the unique lift of determinant 1 of $\mathcal{W}'$ and  by the induction hypothesis
$$\chi_{\mathcal{W}'}= \sum_{j=1}^s (-1)^{j+1}\Big(   \sum_{\substack{0\leq \kappa\leq p^n -1\\p^{i(j)}|\kappa}}\lambda_\kappa^{D}  \Big) \ + \ (-1)^{s}\lambda_0^{D}\,.$$
Again, by Lemma~\ref{lem:OmegaD/Di}(b), $\Omega_{D/D_{i(0)}}(\widetilde{\mathcal{W}'})$ is the unique lift of determinant 1 of $\Omega_{D/D_{i(0)}}(\mathcal{W}')=\mathcal{W}$.
Hence in the Grothendieck ring of $KD$, we have

\begin{equation*}
\begin{split}
  \chi_\mathcal{W} =\chi_{M_{|D/D_{i(0)}|}}-\chi_{\mathcal{W}'}  
             &=  \sum_{\substack{0\leq \kappa \leq  p^n -1\\ p^{i(0)} | \kappa}}  \lambda^D_\kappa   -  \Big(    \sum_{j=1}^s (-1)^j\Big(   \sum_{\substack{0\leq \kappa\leq p^n -1\\p^{i(j)}|\kappa}}\lambda_\kappa^{D}  \Big) \ + \ (-1)^{s}\lambda_0^{D} \Big) \\
               &=   \sum_{j=0}^s (-1)^j\Big(   \sum_{\substack{0\leq \kappa\leq p^n -1\\p^{i(j)}|\kappa}}\lambda_\kappa^{D}  \Big) \ + \ (-1)^{s+1}\lambda_0^{D}\,.
\end{split}
\end{equation*}
\end{proof}

\vspace{2mm}
\subsection{Characters of the Morita correspondents}

We can now proceed to describe the $K$-characters of the Morita correspondents of the trivial source $\bc$-modules under the character bijection $\Gamma_K$ of \S\ref{ssec:PuigForBlockc}. \\

Throuhout this subsection we fix a vertex $D_i\leq D$ with $1\leq i\leq n$ and we denote by  $\rho_{(i,W)}$ for the $K$-character afforded by the unique lift of determinant $1$ of the indecomposable capped endo-permutation $kD_i$-module $\Capp\circ\Res^D_{D_i} (W)$.

\begin{lem}\label{lem:charMorCor}
Let $M$ be the unique trivial source $\bc$-module with vertex $1<D_i\leq D$ and let $\psi_M$ denote the $K$-character afforded by its trivial source lift.   Then
$$\Gamma_{K}^{-1}(\psi_M)=\Ind_{D_i}^{D}( \rho_{(i,W)} )\,.$$
\end{lem}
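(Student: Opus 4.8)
The statement to prove is that $\Gamma_K^{-1}(\psi_M) = \Ind_{D_i}^D(\rho_{(i,W)})$, where $M$ is the unique trivial source $\bc$-module with vertex $D_i$. The strategy is to unwind the definition of $\Gamma_K$ (the bijection $\Irr_K(D)\to\Irr_K(\bc)$ coming from the two Morita equivalences of \S\ref{ssec:PuigForBlockc}) and to track the module $M$ and its trivial source lift through those equivalences on the level of $K$-characters. Let me think about what the pieces are.

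First I would recall from Lemma \ref{lem:tsc} that the Morita correspondent of $M$ in $kD$ under $\Phi_k$ is exactly $U_{D_i}(W) = (\Ind_{D_i}^D\circ\Capp\circ\Res^D_{D_i})(W)$. So on the level of $k$-modules the correspondence is understood. The issue is that $\Gamma_K$ is defined via the $\cO$-lifted, and then $K$-extended, Morita equivalences $\Phi_{\cO}$, $\Phi_K$, and these are bimodule-induced functors; I need to identify the $\cO D$-lattice whose character, pushed through $\Phi_K$, gives $\psi_M$. The natural candidate is $\Ind_{D_i}^D(\wt{N_i})$, where $\wt{N_i}$ is the unique determinant-$1$ lift of $\Capp\circ\Res^D_{D_i}(W)$ (whose character is $\rho_{(i,W)}$ by definition), so that the claim reduces to showing $\Gamma_K(\Ind_{D_i}^D(\rho_{(i,W)})) = \psi_M$, i.e. that $\Phi_K$ sends the character $\Ind_{D_i}^D(\rho_{(i,W)})$ to $\psi_M$.

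The key computation is then: under $\Phi_K$, a character $\lambda\in\IZ\Irr_K(D)$ goes to $\rho_{\wt W}\cdot\lambda$ inside $K\otimes_\cO\wt A = S_K\otimes_K KD$, and then through the second Morita equivalence to $\psi$ in $K\otimes_\cO\wt\bc$; concretely, by \cite[(52.6)]{ThevenazBook} the character bijection multiplies by $\rho_{\wt W}$. So I need: $\psi_M$, as a character of $\bc$, restricted through the Morita equivalence to $S_K\otimes_K KD$, equals $\rho_{\wt W}\cdot\Ind_{D_i}^D(\rho_{(i,W)})$. The crucial module-theoretic fact is that under $\Phi_\cO$, the trivial source $\wt\bc$-lift $\wh M$ of $M$ corresponds to $\wt W\otimes_\cO \Ind_{D_i}^D(\wt{N_i})$ as $\wt A$-module, because an indecomposable $A$-module has the form $W\otimes_k U$ with $U$ indecomposable over $kD$, and the unique such $U$ for $M$ is $U_{D_i}(W) = \Ind_{D_i}^D(\Capp\Res^D_{D_i}W)$ by Lemma \ref{lem:tsc}; I then need to check that the $\cO$-lift side is compatible with the determinant-$1$ lifts, which is where Lemma \ref{lem:det1} and the uniqueness of trivial source lifts enter — specifically that $\wt W\otimes_\cO\Ind_{D_i}^D(\wt{N_i})$ is indeed the lift of $W\otimes_k U_{D_i}(W)$ corresponding to $\wh M$ under $\Phi_\cO$. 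Using the projection formula $\wt W\otimes_\cO\Ind_{D_i}^D(\wt{N_i})\cong\Ind_{D_i}^D(\Res^D_{D_i}\wt W\otimes_\cO\wt{N_i})$ and the fact that $\Res^D_{D_i}\wt W\otimes_\cO\wt{N_i}$ has the determinant-$1$ lift of $\Capp\Res^D_{D_i}W$ as its capped summand, the character bookkeeping should then close up: applying $\Phi_K$ multiplies by $\rho_{\wt W}$, and the identity $\rho_{\wt W}\cdot\Ind_{D_i}^D(\rho_{(i,W)}) = \Ind_{D_i}^D(\Res^D_{D_i}\rho_{\wt W}\cdot\rho_{(i,W)})$ (projection formula on characters) matches the character of the $\bc$-side.

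The main obstacle I expect is the careful matching of $\cO$-lifts: the Morita equivalences and the functor $W\otimes_k-$ are defined over $k$ and $\cO$, but the relevant characters are defined via \emph{specific} lifts (trivial source lift for $M$ and $U_{D_i}(W)$ is not trivial source, determinant-$1$ lift for $W$ and for $\Capp\Res^D_{D_i}W$). I must verify that these choices are mutually compatible — i.e. that the functor $\wt W\otimes_\cO-$ followed by the $\cO$-Morita equivalence $\Phi_\cO$ carries the determinant-$1$ lift of $U_{D_i}(W)$ (equivalently the determinant-$1$ lift $\wt{N_i}$ of $\Capp\Res^D_{D_i}W$, induced up) precisely to the trivial source lift $\wh M$ of $M$. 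This should follow from: (i) $\wt\bc$-lattices of $M$ are in bijection with $\wt A$-lattices lifting $iM$ (as $M$ belongs to a nilpotent block, liftability and the relevant uniqueness hold by Lemma \ref{lem:det1}(b) / the structure of nilpotent blocks), (ii) $\wt A\cong\wt S\otimes_\cO\cO D$ so $\wt A$-lattices lifting $W\otimes_k U$ correspond to $\cO D$-lattices lifting $U$ once we fix the lift $\wt W$ of $W$, and (iii) the trivial source lift is characterised among lifts by a vertex-source condition that is preserved under these equivalences (trivial/endo-permutation source is preserved by source-algebra equivalence, and induction from $D_i$ is compatible with lifting). Assembling (i)–(iii) rigorously, while invoking \cite[(52.6), (52.8)]{ThevenazBook} for the character-level description of $\Gamma_K$, is the technical heart; once it is in place the character identity is just the projection formula.
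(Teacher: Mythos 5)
Your plan is correct and follows essentially the same route as the paper, whose entire proof is the one-line observation that the claim ``follows directly from Lemma~\ref{lem:tsc} and the definition of the character bijection $\Gamma_K$''; you simply make explicit the lift-compatibility bookkeeping (trivial source lift of $M$ versus determinant-$1$ lifts on the $kD_i$-side) that the authors treat as routine. The technical points you flag are the right ones and your projection-formula argument closes them correctly.
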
 

\begin{proof}
Follows directly from Lemma~\ref{lem:tsc} and  the definition of the character bijection $\Gamma_{K}$ of~\S\ref{ssec:PuigForBlockc}.
\end{proof}

\begin{lem}\label{lem:charWDi}
Let $1\leq i\leq n$ and let $\mathcal{W}:=\Omega_{D_i/D_{i(0)}}\circ\cdots\circ\Omega_{D_i/D_{i(t)}}(k)$, where $t\geq 0$ and $0 \leq i(0) < i(1) < \cdots <i(t) \leq i-1$ are integers.
Then: 
\begin{enumerate}
\item[\rm(a)] $\mathcal{W}=\sum_{j=0}^{t}(-1)^{j}\Ind_{D_{i(j)}}^{D_i}(k)+(-1)^{t+1}k$ in the Grothendieck ring of $kD_i$; and 
\item[\rm(b)]$\Ind_{D_i}^{D}(\mathcal{W})=\sum_{j=0}^{t}(-1)^{j} \Ind_{D_{i(j)}}^{D}(k)   +  (-1)^{t+1}   \Ind_{D_i}^{D}(k)$ in the Grothendieck ring of $kD$. 
\end{enumerate}
Moreover, in the Grothendieck ring of $KD$,
$$\Ind_{D_i}^{D}(\chi_\mathcal{W})=\sum_{j=0}^{t}(-1)^{j}
\Big(   \sum_{\substack{0\leq \kappa \leq  p^n -1\\ p^{i(j)} | \kappa}}  \lambda^D_\kappa   
\Big) 
+  (-1)^{t+1} \Big(  \sum_{\substack{0\leq \kappa \leq  p^n -1\\ p^{i} | \kappa}}   \lambda^D_\kappa 
                       \Big) \,.$$
\end{lem}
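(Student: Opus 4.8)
The statement has three parts, all of which should follow by a straightforward induction on $t$ combined with the results already established for the group $D$ applied now to the subgroup $D_i$ (which is itself cyclic of order $p^i$), together with transitivity of induction. The plan is to first prove (a) by induction on $t$, then deduce (b) by applying $\Ind_{D_i}^D$ to the identity in (a) and using $\Ind_{D_i}^D\circ\Ind_{D_{i(j)}}^{D_i}=\Ind_{D_{i(j)}}^D$, and finally deduce the displayed character formula by applying $\Ind_{D_i}^D$ to $\chi_{\mathcal W}$, using Proposition~\ref{prop:charW_D} (read inside $D_i$ rather than $D$) and Lemma~\ref{lem:MD/Di}(b) to evaluate $\Ind_{D_{i(j)}}^D(\chi_k)=\Ind_{D_{i(j)}}^D(\cO)$ as $\sum_{p^{i(j)}\mid\kappa}\lambda^D_\kappa$, and likewise for the last term with $D_i$ in place of $D_{i(j)}$.

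For part (a), the base case $t=0$ is $\mathcal W=\Omega_{D_i/D_{i(0)}}(k)=M_{|D_i/D_{i(0)}|-1}$, which by Lemma~\ref{lem:OmegaD/Di}(a) (applied with $D$ replaced by $D_i$) equals $\Ind_{D_{i(0)}}^{D_i}(k)-k$ in the Grothendieck ring of $kD_i$, matching the claimed formula. For the inductive step, set $\mathcal W':=\Omega_{D_i/D_{i(1)}}\circ\cdots\circ\Omega_{D_i/D_{i(t)}}(k)$; since $i(0)<i(1)<\cdots<i(t)\le i-1$ we have $1\le\dim_k\mathcal W'\le p^{i-i(0)}-1$, so Lemma~\ref{lem:OmegaD/Di}(a) gives a short exact sequence $0\to\mathcal W'\to\Ind_{D_{i(0)}}^{D_i}(k)\to\mathcal W\to 0$ inside $kD_i$ (the $D_{i(0)}$-relative projective cover), whence $\mathcal W=\Ind_{D_{i(0)}}^{D_i}(k)-\mathcal W'$ in the Grothendieck ring; substituting the induction hypothesis for $\mathcal W'$ and shifting indices yields the formula for $\mathcal W$. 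Part (b) is then immediate by applying the $\IZ$-linear map $\Ind_{D_i}^D$ and using transitivity of induction.

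For the final displayed equation, one applies $\Ind_{D_i}^D$ to the character identity $\chi_{\mathcal W}$ provided by Proposition~\ref{prop:charW_D} applied inside $D_i$: that proposition gives $\chi_{\mathcal W}=\sum_{j=0}^t(-1)^j\chi_{\Ind_{D_{i(j)}}^{D_i}(k)}+(-1)^{t+1}\chi_k$ in the Grothendieck ring of $KD_i$ (using Lemma~\ref{lem:MD/Di} inside $D_i$ to identify $\chi_{M_{|D_i/D_{i(j)}|}}$ with $\chi_{\Ind_{D_{i(j)}}^{D_i}(\cO)}$). Then $\Ind_{D_i}^D(\chi_{\Ind_{D_{i(j)}}^{D_i}(k)})=\chi_{\Ind_{D_{i(j)}}^D(\cO)}$ by transitivity, and by Lemma~\ref{lem:MD/Di}(b) this equals $\sum_{p^{i(j)}\mid\kappa}\lambda^D_\kappa$; similarly $\Ind_{D_i}^D(\chi_k)=\chi_{\cO[D/D_i]}=\sum_{p^i\mid\kappa}\lambda^D_\kappa$. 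Assembling these gives exactly the asserted formula.

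The only mild subtlety — and the step I would be most careful about — is the bookkeeping needed to invoke Lemma~\ref{lem:OmegaD/Di}(a) at each stage, namely verifying the dimension bound $\dim_k\mathcal W'\le p^{i-i(0)}-1$ so that the relevant $\Omega_{D_i/D_{i(0)}}$ really does behave as an induced-minus-trivial object; this is where the strict inequalities $i(0)<\cdots<i(t)\le i-1$ are used, and it is essentially the same estimate already carried out in the proof of Proposition~\ref{prop:charW_D}. Everything else is a routine transcription of the $D$-level computations to the subgroup $D_i$, combined with $\IZ$-linearity and transitivity of induction, so no genuinely new obstacle arises.
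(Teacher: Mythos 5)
Your proposal is correct and follows essentially the same route as the paper: induction on $t$ via $D_{i(0)}$-relative projective covers for part (a), transitivity of induction for part (b), and Lemma~\ref{lem:MD/Di}(b) (via Proposition~\ref{prop:charW_D} read inside $D_i$, which supplies the needed compatibility with the determinant-one lifts) for the final character formula. The only cosmetic slip is the orientation of your short exact sequence in the inductive step: the $D_{i(0)}$-relative projective cover is $0\to\mathcal W\to\Ind_{D_{i(0)}}^{D_i}(k)\to\mathcal W'\to 0$, with $\mathcal W=\Omega_{D_i/D_{i(0)}}(\mathcal W')$ as the kernel rather than the quotient, which is immaterial for the Grothendieck-ring identity.
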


\begin{proof}First we note that $\mathcal{W}$ is an indecomposable capped endo-permutation $kD_i$-module. 
\begin{enumerate}
\item[\rm(a)] We proceed by induction on $t$. If $t=0$, then considering a $D_{i(0)}$-relative projective cover of the trivial module
$$\begin{tikzcd}[cramped, sep=small] 0\arrow[r] & \Omega_{D/D_i}(k) \ar[r] & \Ind_{D_{i(0)}}^{D_i}(k) \ar[r] & k\ar[r] & 0\end{tikzcd}$$
yields $\mathcal{W}= \Ind_{D_{i(0)}}^{D_i}(k)-k$, as required. 
Now, given $t>1$, we may decompose 
\begin{equation*}
\begin{split}
             \mathcal{W} &=    \Omega_{D_i/D_{i(0)}} \Big[  \Omega_{D_i/D_{i(1)}}  \circ\cdots\circ\Omega_{D_i/D_{i(t)}}(k)         \Big]  \\
                 &=    P_{D_i/D_{i(0)}}\Big[   \Omega_{D_i/D_{i(1)}}  \circ\cdots\circ\Omega_{D_i/D_{i(t)}}(k)         \Big] -  \Big[  \Omega_{D_i/D_{i(1)}}  \circ\cdots\circ\Omega_{D_i/D_{i(t)}}(k)         \Big]  \,.
\end{split}
\end{equation*}
As  $\dim_k\left(  \Omega_{D_i/D_{i(1)}}  \circ\cdots\circ\Omega_{D_i/D_{i(t)}}(k)    \right)< |D_i/D_{i(0)}|$, we have 
$$P_{D_i/D_{i(0)}}\big[   \Omega_{D_i/D_{i(1)}}  \circ\cdots\circ\Omega_{D_i/D_{i(t)}}(k)   \big]=P_{D_i/D_{i(0)}}(k)$$ and the 
the induction hypothesis yields
\begin{equation*}
\begin{split}
             \mathcal{W} &=      P_{D_i/D_{i(0)}}(k) - \Big[\sum_{j=1}^{t}(-1)^{j+1}\Ind_{D_{i(j)}}^{D_i}(k)+(-1)^{t}k\Big]\\
                 &=    \sum_{j=0}^{t}(-1)^{j}\Ind_{D_{i(j)}}^{D_i}(k)+(-1)^{t+1}k
\end{split}
\end{equation*}
\item[\rm(b)]  It follows from (a) that
\begin{equation*}
\begin{split}
              \Ind_{D_i}^{D}(\mathcal{W}) & =          \Ind_{D_i}^{D}\!\Big(   \sum_{j=0}^{t}(-1)^{j}\Ind_{D_{i(j)}}^{D_i}(k)+(-1)^{t+1}k     \Big)\\
                                           & =            \sum_{j=0}^{t}(-1)^{j}   \left(  \Ind_{D_i}^{D}\circ \Ind_{D_{i(j)}}^{D_i}(k) \right) + (-1)^{t+1}\Ind_{D_i}^{D}(k)\\
                                           &=              \sum_{j=0}^{t}(-1)^{j} \Ind_{D_{i(j)}}^{D}(k)   +  (-1)^{t+1}   \Ind_{D_i}^{D}(k)
\end{split}
\end{equation*}
\end{enumerate}The last claim is now a direct consequence of Lemma~\ref{lem:MD/Di}(b).
\end{proof}

\begin{prop}\label{prop:charMoritaCorresp}
Let $W=W(0<i_0<i_1<\ldots< i_s<n)$ be the indecomposable capped endo-permutation module parametrising the source algebra of the block $\bB$. Let $M$ be the unique trivial source $\bc$-module with vertex $D_i$ and  let $\psi_M$ denote the $K$-character afforded by its trivial source lift to $\cO$.
Then 
$$\Gamma_{K}^{-1}(\psi_M)= \sum_{j=0}^{t(i)}(-1)^{j}
\Big(   \sum_{\substack{0\leq \kappa \leq  p^n -1\\ p^{i_j} | \kappa}}  \lambda^D_\kappa   
\Big) +  (-1)^{t(i)+1}
\Big(  \sum_{\substack{0\leq \kappa \leq  p^n -1\\ p^{i} | \kappa}}   \lambda^D_\kappa 
\Big)\,, $$
where  $t(i):=\text{max}\{ 0\leq j\leq s\mid i_j\leq i-1\}$ if $W\cong k$ and $t(i):=-1$ if $W=k$.  
\end{prop}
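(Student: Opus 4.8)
The plan is to combine Lemma~\ref{lem:charMorCor}, which reduces the computation of $\Gamma_K^{-1}(\psi_M)$ to computing $\Ind_{D_i}^D(\rho_{(i,W)})$, with Lemma~\ref{lem:charWDi}, which evaluates $\Ind_{D_i}^D(\chi_{\mathcal W})$ for a capped endo-permutation $kD_i$-module $\mathcal W$ given as an iterated relative Heller translate of the trivial module. The only real work is therefore to identify $\Capp\circ\Res^D_{D_i}(W)$ explicitly as such an iterated relative Heller translate, and to read off which indices $i(0)<\cdots<i(t)$ occur. So first I would write $W=W(0<i_0<\cdots<i_s<n)$ as in Notation~\ref{nota:W}, so that $W=\Omega_{D/D_{i_0}}\circ\cdots\circ\Omega_{D/D_{i_s}}(k)$.

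The key step is a restriction formula: I claim that
$$\Capp\bigl(\Res^D_{D_i}(W)\bigr)\;=\;\Omega_{D_i/D_{i_0}}\circ\cdots\circ\Omega_{D_i/D_{i_{t(i)}}}(k)\qquad\text{in }kD_i\text{-}\mathrm{mod},$$
where $t(i)=\max\{0\le j\le s\mid i_j\le i-1\}$ (and the right-hand side is $k$ when $W=k$, i.e. $t(i)=-1$). This is exactly the behaviour of Dade's parametrisation of endo-permutation modules of cyclic $p$-groups under restriction to a subgroup: the ``building blocks'' $\Omega_{D/D_{i_j}}$ with $i_j\ge i$ restrict to something that caps to the trivial module on $D_i$ (because $D_{i_j}\supseteq D_i$, so modulo $D_{i_j}$ the group $D_i$ is already killed), while those with $i_j\le i-1$ restrict compatibly, i.e. $\Res^D_{D_i}\circ\Omega_{D/D_{i_j}}$ caps to $\Omega_{D_i/D_{i_j}}\circ\Res^D_{D_i}$. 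I would justify this either by citing the explicit computation of $\ell_i=\dim_k\Capp(\Res^D_{D_i}(W))$ from \cite[Theorem~5.1]{HL19} recalled in Notation~\ref{nota:W} and matching dimensions via Lemma~\ref{lem:charWDi}(a) (the dimension of $\sum_{j=0}^{t(i)}(-1)^j\Ind_{D_{i_j}}^{D_i}(k)+(-1)^{t(i)+1}k$ is $\sum_{i_j\le i-1}(-1)^j p^{i-i_j}+(-1)^{t(i)+1}$, which is precisely $\ell_i$), or directly from the inductive description of endo-permutation $kD$-modules via Heller operators and inflation in \cite[Exercises 17.2, 28.3]{ThevenazBook} together with the fact that inflation from $D/D_{i_j}$ followed by restriction to $D_i$ is trivial when $D_i\le D_{i_j}$.

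Once the claim is in hand, by definition $\rho_{(i,W)}=\chi_{\mathcal W}$ for $\mathcal W=\Omega_{D_i/D_{i_0}}\circ\cdots\circ\Omega_{D_i/D_{i_{t(i)}}}(k)$, and Lemma~\ref{lem:charMorCor} gives $\Gamma_K^{-1}(\psi_M)=\Ind_{D_i}^D(\chi_{\mathcal W})$. Applying the last displayed formula of Lemma~\ref{lem:charWDi} with $t=t(i)$ and $i(j)=i_j$ yields exactly
$$\Gamma_K^{-1}(\psi_M)=\sum_{j=0}^{t(i)}(-1)^{j}\Bigl(\sum_{\substack{0\le\kappa\le p^n-1\\ p^{i_j}\mid\kappa}}\lambda^D_\kappa\Bigr)+(-1)^{t(i)+1}\Bigl(\sum_{\substack{0\le\kappa\le p^n-1\\ p^{i}\mid\kappa}}\lambda^D_\kappa\Bigr),$$
which is the assertion; the degenerate case $W=k$ (where $\mathcal W=k$ and $t(i)=-1$) gives $\Gamma_K^{-1}(\psi_M)=\Ind_{D_i}^D(\chi_k)=\sum_{p^i\mid\kappa}\lambda^D_\kappa$ by Lemma~\ref{lem:MD/Di}(b), consistent with reading the sum as starting at $j=0$ with the single term $(-1)^0\sum_{p^0\mid\kappa}$ being absent. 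The main obstacle is cleanly establishing the restriction-and-cap formula for $W$; everything after that is bookkeeping already packaged in Lemmas~\ref{lem:charMorCor} and~\ref{lem:charWDi}. I would also double-check the edge cases $i=n$ (where the last term $\sum_{p^n\mid\kappa}\lambda^D_\kappa=\lambda^D_0$) and $t(i)=s$ against Proposition~\ref{prop:charW_D} to make sure the signs and the truncation of the index set $\{i_0,\dots,i_s\}$ to $\{i_0,\dots,i_{t(i)}\}$ agree.
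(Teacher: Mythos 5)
Your proposal is correct and follows essentially the same route as the paper: reduce via Lemma~\ref{lem:charMorCor} to $\Ind_{D_i}^D(\rho_{(i,W)})$, identify $\Capp\circ\Res^D_{D_i}(W)$ as $\Omega_{D_i/D_{i_0}}\circ\cdots\circ\Omega_{D_i/D_{i_{t(i)}}}(k)$, and apply Lemma~\ref{lem:charWDi}. The only difference is that the paper simply cites \cite[\S 4.5]{HL19} for the restriction-and-cap identity, whereas you sketch a justification; your dimension-matching argument does work, since indecomposable $kD_i$-modules are determined by their dimension and both sides are indecomposable.
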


\begin{proof}
By Lemma~\ref{lem:charMorCor}, $\Gamma_{K}^{-1}(\psi_M)=\Ind_{D_i}^{D}( \rho_{(i,W)})$, where $\rho_{(i,W)}$ is the  $K$-character of the  lift of determinant $1$ of the indecomposable capped endo-permutation $kD_i$-module $\Capp\circ\Res^D_{D_i} (W)$. By \cite[\S4.5]{HL19}, we have 
\begin{equation*}
\begin{split}
      \Capp\circ \Res^{D}_{D_i}\left(W\right)  & = \Omega_{D_{i}/D_0}^{a_0}\circ\Omega_{D_i/D_1}^{a_1}\circ\cdots\circ\Omega_{D_i/D_{i-1}}^{a_{i-1}}(k)\\
                                                                     & =   \Omega_{D_i/D_{i_0}}\circ\cdots\circ\Omega_{D_i/D_{i_t}}(k)   \,,    
\end{split}
\end{equation*}
where  $t:=t(i)$. Therefore it follows from Lemma~\ref{lem:charWDi}(b) that 
$$\Gamma_{K}^{-1}(\psi_M)=\Ind_{D_i}^{D}( \rho_{(i,W)})=\sum_{j=0}^{t(i)}(-1)^{j}\Big(   \sum_{\substack{0\leq \kappa \leq  p^n -1\\ p^{i_j} | \kappa}}  \lambda^D_\kappa   \Big) +  (-1)^{t(i)+1} \Big( \sum_{\substack{0\leq \kappa \leq  p^n -1\\ p^{i} | \kappa}}   \lambda^D_\kappa \Big)  \,.$$
\end{proof}


\section{Step 2: Characters of the trivial source $\bb$-modules}\label{sec:charN1}

Throughout this section, we assume $W=W(0<i_0<i_1<\ldots< i_s<n)$ according to Notation~\ref{nota:W} is the indecomposable capped endo-permutation module parametrising the source algebra of the block $\bB$. We let $1<D_i\leq D$ ($1\leq i\leq n$) be a fixed vertex and we set 
$t(i):=\text{max}\{ 0\leq j\leq s\mid i_j\leq i-1\}$ if $W\ncong k$  and $t(i):=-1$ when $W=k$.\\

First we recover the characters of the trivial source $\bc$-modules.

\begin{lem}\label{lem:charC1}
Let $M$ be the unique trivial source $\bc$-module with vertex $D_i$ and let $\psi_M$ denote the $K$-character afforded by its trivial source lift to $\cO$.   Then
\begin{equation*}
\begin{split}
 \psi_M   & = \sum_{j=0}^{t(i)}(-1)^{j}\Big(   \sum_{\substack{0\leq \kappa \leq  p^n -1\\ p^{i_j} | \kappa}} \psi_{ \lambda^D_\kappa }   \Big) +  (-1)^{t(i)+1}\Big(  \sum_{\substack{0\leq \kappa \leq  p^n -1\\ p^{i} | \kappa}} \psi_{  \lambda^D_\kappa  } \Big)\\
              & =  \sum_{j=0}^{t(i)}(-1)^{j}\Big(   \sum_{\substack{1\leq \kappa \leq  p^n -1\\ p^{i_j} | \kappa}} \psi_{ \lambda^D_\kappa }   \Big) +  (-1)^{t(i)+1}\Big(  \sum_{\substack{1\leq \kappa \leq  p^n -1\\ p^{i} | \kappa}} \psi_{  \lambda^D_\kappa  } \Big)     + d(W,D_i)  \psi_{ \lambda^D_0 }      \,,         
\end{split}
\end{equation*}
where $d(W,D_i):=0$ if  $t(i)$ is even and   $d(W,D_i):=1$ if  $t(i)$ is odd. 
\end{lem}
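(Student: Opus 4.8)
The plan is to combine Proposition~\ref{prop:charMoritaCorresp} with the definition of the character bijection $\Gamma_K$ and then rewrite the result separating off the contribution of $\lambda^D_0$. First I would apply $\Gamma_K$ to the identity of Proposition~\ref{prop:charMoritaCorresp}: since $\psi_M=\Gamma_K\bigl(\Gamma_K^{-1}(\psi_M)\bigr)$ and $\Gamma_K$ is $\IZ$-linear with $\Gamma_K(\lambda^D_\kappa)=\psi_{\lambda^D_\kappa}$ by the labelling convention of \S\ref{ssec:PuigForBlockc}, applying $\Gamma_K$ termwise to
$$\Gamma_{K}^{-1}(\psi_M)= \sum_{j=0}^{t(i)}(-1)^{j}\Big(\sum_{\substack{0\leq \kappa\leq p^n-1\\ p^{i_j}\mid\kappa}}\lambda^D_\kappa\Big)+(-1)^{t(i)+1}\Big(\sum_{\substack{0\leq\kappa\leq p^n-1\\ p^{i}\mid\kappa}}\lambda^D_\kappa\Big)$$
immediately yields the first displayed equality for $\psi_M$.

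For the second equality, the idea is simply to peel the $\kappa=0$ summand out of each inner sum: in every inner sum $p^{i_j}\mid 0$ and $p^i\mid 0$, so the term $\psi_{\lambda^D_0}$ appears once in each of the $t(i)+1$ inner sums carrying a sign $(-1)^j$ and once more with sign $(-1)^{t(i)+1}$. Hence the total coefficient of $\psi_{\lambda^D_0}$ in $\psi_M$ is $\sum_{j=0}^{t(i)}(-1)^j+(-1)^{t(i)+1}$. The plan is to observe that the alternating sum $\sum_{j=0}^{t(i)}(-1)^j$ equals $1$ when $t(i)$ is even and $0$ when $t(i)$ is odd, so that adding $(-1)^{t(i)+1}$ gives total coefficient $1+(-1)=0$ when $t(i)$ is even and $0+1=1$ when $t(i)$ is odd; this is exactly $d(W,D_i)$ as defined in the statement. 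Replacing the lower bounds $0\leq\kappa$ by $1\leq\kappa$ in all the inner sums and adding the term $d(W,D_i)\,\psi_{\lambda^D_0}$ then produces the second displayed formula. One should also record the boundary case $W=k$: then $t(i)=-1$, the first sum is empty, and $(-1)^{t(i)+1}=(-1)^0=1$, so $\psi_M=\sum_{p^i\mid\kappa}\psi_{\lambda^D_\kappa}$ with coefficient $1$ on $\psi_{\lambda^D_0}$, consistent with $d(W,D_i)=1$ since $t(i)=-1$ is odd; this matches the fact that $M$ is then the unique trivial source $\bc$-module $\Ind_{D_i}^D$-type object.

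This argument is essentially bookkeeping, so there is no serious obstacle; the only point requiring a little care is making sure the overlap analysis for $\psi_{\lambda^D_0}$ is correct — that is, that $0$ is the \emph{only} index lying in all the index sets simultaneously is irrelevant, what matters is only that $0$ lies in \emph{each} index set, which is clear. A secondary point to state cleanly is that $\Gamma_K$ is indeed $\IZ$-linear (it is defined that way by abuse of notation in \S\ref{ssec:PuigForBlockc}) and that the expression in Proposition~\ref{prop:charMoritaCorresp} is an identity in the Grothendieck ring $\IZ\Irr_K(D)$, so that applying $\Gamma_K$ to it is legitimate. Thus the proof reduces to: (i) apply $\Gamma_K$ to Proposition~\ref{prop:charMoritaCorresp}; (ii) extract the $\kappa=0$ contribution and evaluate the resulting alternating sum to identify $d(W,D_i)$.
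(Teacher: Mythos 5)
Your proposal is correct and follows exactly the paper's argument: apply $\Gamma_K$ to the identity of Proposition~\ref{prop:charMoritaCorresp} to get the first equality, then split off the $\kappa=0$ terms and evaluate the alternating sum to identify $d(W,D_i)$. The paper states this in one line; your explicit verification of the coefficient of $\psi_{\lambda^D_0}$ (including the boundary case $t(i)=-1$) is just the bookkeeping the paper calls ``straightforward''.
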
 

\begin{proof}
Applying $\Gamma_K$ to the formula in Proposition~\ref{prop:charMoritaCorresp} yields the first equality. The second equality is straightforward, indeed,  we only write the unique non-exceptional character $\psi_{ \lambda^D_0 }$ in a separate summand. 
\end{proof}

Next we need to induce the above characters in turn to the stabiliser $T(\bc)$ and then $N_1$ in order to compute the $K$-characters of the trivial source $\bb'$-modules and  $\bb$-modules. 

\begin{rem}\label{rem:labellingIrrEx}
Recall that we write $\Irr_K(\bc)=\{\psi_{\lambda^D_\kappa}   \mid 1\leq \kappa\leq p^{n}-1\}$. Then the following assertions follow from Clifford-theoretic arguments (see \cite[\S19]{AlperinBook}): 
\begin{enumerate}
\item[\rm(1)] For $\psi_{\lambda^D_0}$, the unique non-exceptional character of $\bc$, we have 
$$\Ind_{C_G(D_1)}^{T(\bc)}(\psi_{\lambda^D_0})=\widetilde{\psi}_1+\ldots+\widetilde{\psi}_e\,,$$ 
where $\{\widetilde{\psi}_1,\ldots,\widetilde{\psi}_e\}=\Irr'(\bb')$  (each $\widetilde{\psi}_j$ extends $\psi_{\lambda^D_0}$);
\item[\rm(2)] $\Ind_{C_G(D_1)}^{T(\bc)}(\psi_{\lambda^D_\kappa})=:\widetilde{\psi}_{\lambda^D_\kappa}\in\Irr_{\text{Ex}}(\bb')$ for each exceptional character $\psi_{\lambda^D_\kappa}\Irr_{\text{Ex}}(\bc)$; 
\item[\rm(3)] $\Irr'(\bb)=\{\theta_1,\ldots,\theta_e\}$ where $\theta_{j}:=\Ind_{T(\bc)}^{N_1}(\widetilde{\psi}_j)$ for each $1\leq j\leq e$ and  
$$\Ind_{T(\bc)}^{N_1}(\widetilde{\psi}_{\lambda^D_\kappa})=:\theta_{\lambda^D_\kappa}\in\Irr_{\text{Ex}}(\bb)\quad \text{ for each }1\leq \kappa\leq p^{n}-1$$
 as the  theorem of Fong-Reynolds gives a source-algebra equivalence between $\bb'$ and $\bb$ induced by induction from $T(\bc)$ to $N_1$. 
 (See \cite[1.5.Theorem]{KKW04}.)

 \item[\rm(4)] Let $E$ be the inertial quotient of $\bB$. This is a cyclic subgroup of order $e$ of $N_G(D)/C_G(D)$, hence acts by inner automorphisms on $D=\langle u\rangle$ and embeds as a subgroup of $\Aut(D)\cong (\IZ/p^n\IZ)^{\times}$.
Hence, writing $E=\langle \bar{h}\rangle$ with $h\in N_G(D)$, there exists $\bar{a}\in (\IZ/p^n\IZ)^{\times}$  of order $e$ such that 
$$h^{-1}uh=u^{a}\,,$$
where $0\leq a< p^n$ is coprime to $p$ since $e\mid p-1$, so that the group  $E$ acts by conjugation on $\Irr_{\text{Ex}}(D)$ via
$$(\lambda_{\kappa}^D)^{\bar{h}}(u)=\lambda_\kappa^D(h^{-1}uh)=\lambda_\kappa^D(u^a)=\zeta^{\kappa a}=\lambda_{\kappa a}^D(u)\,.$$
Hence � $(\lambda_{\kappa}^D)^{\bar{h}^\alpha}=\lambda_{\kappa a^{\alpha}}^D$ and each orbit has length $e$.  Therefore, fixing a set of representatives of the orbits of this action, say $\{\lambda_{\kappa(r)}\mid 1\leq r\leq m\}=:\Lambda$  (where $m$ is the exceptional multiplicity of $\bB$),  we may rewrite
$$\Irr_{\text{Ex}}(D)=\bigsqcup_{r=1}^{m}\{\lambda_{\kappa(r)a^\alpha}^D\mid 0\leq \alpha\leq e-1\}\,,$$
where $\lambda_{\kappa(r,\alpha)}^D:D\lra K^{\times}, u\mapsto \zeta^{\kappa(r)a^{\alpha}}$.   
It follows that 
$$\Ind_{C_G(D_1)}^{T(\bc)}( \psi_{   \lambda_{\kappa(r)a^{\alpha}}^D })=\Ind_{C_G(D_1)}^{T(\bc)}( \psi_{  \lambda_{\kappa(r')a^{\alpha'}}^D}  )\quad\Longleftrightarrow\quad r=r'\,,$$
thus we may set 
$$\theta_{\lambda_{\kappa(r)}}:=\Ind_{C_G(D_1)}^{N_1}(\psi_{\lambda_{\kappa(r)a^{\alpha}}^D})\quad\text{for each }1\leq r\leq m, 0\leq\alpha\leq e-1\,,$$
so that by the above  $\Irr_{\text{Ex}}(\bb)=\{\theta_{\lambda_{\kappa(r)}}\mid 1\leq r\leq m\}=\{\theta_{\lambda}\mid \lambda\in\Lambda\}$\,.
\end{enumerate}
\end{rem}
\bigskip

\begin{cor}\label{cor:tsb}
Let $1<D_i\leq D$  be as above. Let  $Y_1,\ldots, Y_e$ be the $e$ pairwise non-isomorphic trivial source $\bb$-modules with vertex $D_i$. 
For each $1\leq x\leq e$ let $\chi_{Y_x}=\Psi_{Y_x}+\Xi_{Y_x}$ be the $K$-character afforded by  the trivial source lift of $Y_x$ to $\cO$ (see Notation~\ref{nota:chiM}). Then  the following assertions hold:
\begin{enumerate}
\item[\rm(a)]  if $t(i)$ is odd, then 
without loss of generality 
we may assume that we have chosen the labelling such that  $\Psi_{Y_x}=\theta_x$, whereas $\Psi_{Y_x}=0$  if  $t(i)$ is even; and
\item[\rm(b)]  $$\Xi_{Y_x}=      \sum_{j=0}^{t(i)}(-1)^{j} \Big(  \sum_{\substack{1\leq r \leq m\\ p^{i_j} | \kappa(r)}}  \theta_{\lambda_{\kappa(r)}}  \Big)   + (-1)^{t(i)+1}\Big(\sum_{\substack{1\leq r \leq m\\ p^{i} | \kappa(r)}}   \theta_{\lambda_{\kappa(r)}}  \Big) \,.$$
\end{enumerate}
\end{cor}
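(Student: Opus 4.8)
The plan is to transport the character formula for the Morita correspondent in $kD$ all the way up to $N_1$ via the chain of Clifford-theoretic and Fong--Reynolds steps recalled in \S\ref{ssec:tscyclicblocks} and \S\ref{ssec:PuigForBlockc}, tracking carefully how each functor acts on characters. First I would recall that the trivial source $\bc$-module $M$ with vertex $D_i$ is unique (Lemma~\ref{lem:tsc}), and that each of the $e$ trivial source $\bb$-modules $Y_1,\dots,Y_e$ with vertex $D_i$ is obtained from $M$ by the composite of the source-algebra equivalence $\bb'\sim\bb$ (Fong--Reynolds, induction from $T(\bc)$ to $N_1$) with the Clifford-theoretic induction step from $\bc$ up to $\bb'$; the $e$ modules arise because the unique $\bc$-module induces to a sum of (or is covered by) $e$ modules over $\bb'$, permuted by the inertial quotient $E$. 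On the level of characters, Lemma~\ref{lem:charC1} gives $\psi_M$ as an explicit $\IZ$-combination of the $\psi_{\lambda^D_\kappa}$, split into a multiple $d(W,D_i)$ of the non-exceptional character $\psi_{\lambda^D_0}$ plus a combination of exceptional characters $\psi_{\lambda^D_\kappa}$ with $\kappa\ge 1$.

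The key computation is then to apply $\Ind_{C_G(D_1)}^{T(\bc)}$ followed by $\Ind_{T(\bc)}^{N_1}$ (equivalently $\Ind_{C_G(D_1)}^{N_1}$) to this expression and use the bookkeeping in Remark~\ref{rem:labellingIrrEx}. Concretely: by Remark~\ref{rem:labellingIrrEx}(1),(3), $\Ind_{C_G(D_1)}^{N_1}(\psi_{\lambda^D_0})=\theta_1+\dots+\theta_e$, the sum of all non-exceptional characters of $\bb$; and by Remark~\ref{rem:labellingIrrEx}(2),(3),(4), $\Ind_{C_G(D_1)}^{N_1}(\psi_{\lambda^D_{\kappa}})=\theta_{\lambda_{\kappa(r)}}$ where $\lambda_{\kappa(r)}$ is the orbit representative of $\lambda^D_\kappa$ under the $E$-action, and this depends only on the orbit, not on the chosen $\kappa$ in it. The crucial point making the exceptional part come out in the stated form is that the divisibility conditions ``$p^{i_j}\mid\kappa$'' and ``$p^i\mid\kappa$'' are $E$-stable: since $a$ is coprime to $p$, we have $p^\ell\mid\kappa\iff p^\ell\mid\kappa a^\alpha$, so each $E$-orbit either lies entirely inside the set $\{\kappa: p^\ell\mid\kappa\}$ or is disjoint from it, and the number of orbits meeting it is exactly $|\{1\le r\le m: p^\ell\mid\kappa(r)\}|$ (each such orbit contributing $e$ characters $\psi_{\lambda^D_\kappa}$ downstairs but a single $\theta_{\lambda_{\kappa(r)}}$ upstairs). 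Summing the $e$ characters in an orbit and inducing collapses the factor of $e$; more precisely one checks that $\sum_{\kappa: p^\ell\mid\kappa,\, 1\le\kappa\le p^n-1} \Ind_{C_G(D_1)}^{N_1}(\psi_{\lambda^D_\kappa}) = e\cdot\sum_{1\le r\le m,\, p^\ell\mid\kappa(r)} \theta_{\lambda_{\kappa(r)}}$ is wrong by a factor — rather one should induce via $T(\bc)$ where the orbit-sum becomes a single character $\widetilde\psi_{\lambda^D_\kappa}$, so that no spurious multiplicities appear. This is where I must be careful to follow the two-step path $C_G(D_1)\to T(\bc)\to N_1$ exactly as in Remark~\ref{rem:labellingIrrEx}, rather than collapsing it into one induction.

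Having done this, the induced character $\Ind_{C_G(D_1)}^{N_1}(\psi_M)$ splits as $d(W,D_i)(\theta_1+\dots+\theta_e)$ plus the exceptional combination displayed in part~(b). By the reduction in \S\ref{ssec:tscyclicblocks} (Green correspondence, Fong--Reynolds, Clifford theory all preserving trivial source modules, vertices and sources) this induced character is precisely $\sum_{x=1}^e \chi_{Y_x}$, or rather the multiset $\{\chi_{Y_x}\}_{x=1}^e$ is recovered from it: the non-exceptional part distributes among the $Y_x$ as the distinct characters $\theta_1,\dots,\theta_e$ when $d(W,D_i)=1$, i.e. when $t(i)$ is odd, and is empty when $d(W,D_i)=0$, i.e. $t(i)$ even — this gives part~(a), after choosing the labelling of the $Y_x$ to match that of the $\theta_x$. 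For the exceptional parts, Lemma~\ref{lem:=exc} (applicable since the $Y_x$ have a common vertex $D_i$ and are not hooks, using $e>1$, $m>1$) tells us $\Xi_{Y_1}=\dots=\Xi_{Y_e}$, so each $\Xi_{Y_x}$ equals the common value, and since the whole exceptional combination is shared equally it must equal the single copy displayed in part~(b) — there is no division by $e$ needed because each $\theta_{\lambda_{\kappa(r)}}$ already appears with the correct multiplicity once the orbit bookkeeping is done at the level of $T(\bc)$. The main obstacle is exactly this multiplicity bookkeeping: making sure that passing from the $e$ downstairs exceptional characters in an $E$-orbit to the single upstairs character $\theta_{\lambda_{\kappa(r)}}$ is accounted for correctly and consistently with Lemma~\ref{lem:=exc}, so that the coefficients in part~(b) come out as $\pm1$ times indicator sums over $\{1\le r\le m: p^\ell\mid\kappa(r)\}$ and not scaled by $e$.
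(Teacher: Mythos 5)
Your overall strategy --- induce the formula of Lemma~\ref{lem:charC1} from $C_G(D_1)$ up to $N_1$ via Remark~\ref{rem:labellingIrrEx} and then use Lemma~\ref{lem:=exc} to distribute the result among the $Y_x$ --- is the paper's, but the multiplicity bookkeeping that you yourself single out as the main obstacle is exactly where your argument goes wrong. The identity
$$\sum_{\substack{1\le \kappa\le p^n-1\\ p^{\ell}\mid\kappa}}\Ind_{C_G(D_1)}^{N_1}(\psi_{\lambda^D_\kappa})\;=\;e\cdot\!\!\sum_{\substack{1\le r\le m\\ p^{\ell}\mid\kappa(r)}}\theta_{\lambda_{\kappa(r)}}$$
which you declare ``wrong by a factor'' is in fact correct and is what the paper uses: by Remark~\ref{rem:labellingIrrEx}(2)--(4) each \emph{individual} exceptional character $\psi_{\lambda^D_\kappa}$ (not the orbit-sum) already induces irreducibly to $T(\bc)$ and then to $\theta_{\lambda_{\kappa(r)}}$, so the $e$ members of an $E$-orbit each contribute one copy of the \emph{same} $\theta_{\lambda_{\kappa(r)}}$; induction is additive, so the orbit-sum cannot ``become a single character'' (a degree count confirms the factor $e$). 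Consequently $\Ind_{C_G(D_1)}^{N_1}(\psi_M)=\sum_{x=1}^e\chi_{Y_x}$ has exceptional part equal to $e$ times the expression in (b), and the division by $e$ that you claim is unnecessary is precisely how the proof concludes: Lemma~\ref{lem:=exc} gives $\Xi_{Y_1}=\cdots=\Xi_{Y_e}$, so $\sum_x\Xi_{Y_x}=e\,\Xi_{Y_1}$, and comparing with the factor $e$ above (using that each exceptional constituent occurs with multiplicity at most one in each $\chi_{Y_x}$) yields (b). Your two errors cancel to give the right formula, but as written the intermediate claims are false and the argument is not sound.

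Two further points. First, you must treat separately the case where the $Y_x$ are hooks (equivalently $W=k$ and $D_i=D$, by \cite[Corollary 5.2(c)]{HL19}): there Lemma~\ref{lem:=exc} does not apply, but since $\sigma(\bb)$ is a star the $Y_x$ are the simple $\bb$-modules, so $\chi_{Y_x}=\theta_x$ and $\Xi_{Y_x}=0$, consistent with (a) and (b). Second, for (a) you should say explicitly that the non-exceptional part of the induced character is $d(W,D_i)(\theta_1+\cdots+\theta_e)$ with $d(W,D_i)\in\{0,1\}$ determined by the parity of $t(i)$, and that each $\Psi_{Y_x}$ is either $0$ or a single non-exceptional irreducible (Remark~\ref{rem:pathsb}), so that when $d(W,D_i)=1$ the $e$ distinct characters $\theta_1,\ldots,\theta_e$ distribute one to each $Y_x$ up to relabelling.
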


\begin{proof}
First assume that $Y_1,\ldots, Y_e$ are hooks. Then by \cite[Corollary 5.2(c)]{HL19}, we must have $W=k$ and $D_i=D_n$. In addition, as $\sigma(\bb)$ is a star with $e$ edges and exceptional vertex at its centre, $Y_1,\ldots, Y_e$ are simple.
Hence we may assume that we have chosen the labelling such that $\chi_{Y_x}=\theta_x=\Psi_{Y_x}$ and $\Xi_{Y_x}=0$ for each $1\leq x\leq e$. Hence (a) and (b) hold in this case. 
\par
We may now assume that $Y_1,\ldots, Y_e$ are not hooks. If $M$ denotes the unique trivial source $\bc$-module with vertex $D_i$, then by Clifford theory
$$\Ind_{C_G(D_1)}^{T(\bc)}(M)=M_1\oplus \cdots\oplus M_e$$
is the direct sum of the $e$ pairwise non-isomorphic trivial source $\bb'$-modules with vertex $D_i$ and 
$$\Ind_{C_G(D_1)}^{N_1}(M)=Y_1\oplus \cdots\oplus Y_e \quad\text{ with } Y_j=\Ind_{T(\bc)}^{N_1}(M_j)\;\forall\;1\leq j\leq e\text{ (w.l.o.g.)}$$
is the direct sum of the $e$ pairwise non-isomorphic trivial source $\bb$-modules with vertex $D_i$. 
At the level of $K$-characters, we obtain from Lemma~\ref{lem:charC1} and Remark~\ref{rem:labellingIrrEx} that 
\begin{equation*}
\begin{split}
  \Ind_{C_G(D_1)}^{N_1}(\psi_M) & =    \sum_{j=0}^{t(i)}(-1)^{j}\Big(   \sum_{\substack{1\leq \kappa \leq  p^n -1\\ p^{i_j} | \kappa}} \Ind_{C_G(D_1)}^{N_1}(\psi_{ \lambda^D_\kappa })   \Big) +  (-1)^{t(i)+1}\Big(  \sum_{\substack{1\leq \kappa \leq  p^n -1\\ p^{i} | \kappa}} \Ind_{C_G(D_1)}^{N_1}(\psi_{  \lambda^D_\kappa}) \!\Big)  \\
                                                        &\phantom{= }    + d(W,D_i)  \Ind_{C_G(D_1)}^{N_1}(\psi_{ \lambda^D_0 })     \\
                                                        & =     \sum_{j=0}^{t(i)}(-1)^{j}e\Big( \!   \sum_{\substack{1\leq r \leq m\\ p^{i_j} | \kappa(r)}}  \theta_{\lambda_{\kappa(r)}} \!  \Big)   \! +\!  (-1)^{t(i)+1}e\Big(\! \sum_{\substack{1\leq r \leq m\\ p^{i} | \kappa(r)}}   \theta_{\lambda_{\kappa(r)}} \! \Big)  \! +\! d(W,D_i)(\theta_1+\ldots+\theta_e)  \,.               
\end{split}
\end{equation*}
As by Lemma~\ref{lem:=exc} we have  $\Xi_{Y_1}=\ldots=\Xi_{Y_e}$ and the multiplicity of each irreducible constituent of this character is one, we have 
$$\Xi_{Y_x}=      \sum_{j=0}^{t(i)}(-1)^{j} \Big(  \sum_{\substack{1\leq r \leq m\\ p^{i_j} | \kappa(r)}}  \theta_{\lambda_{\kappa(r)}}  \Big)   + (-1)^{t(i)+1}\Big(\sum_{\substack{1\leq r \leq m\\ p^{i} | \kappa(r)}}   \theta_{\lambda_{\kappa(r)}}  \Big) $$
for each $1\leq x\leq e$.
\end{proof}

\begin{rem}\label{rem:pathsb}
According to Janusz' classification of the indecomposable modules in blocks with cyclic defect groups \cite{Jan69} 
a non-simple trivial source $\bb$-module $Y_x$ ($1\leq x\leq e$)  as in Corollary~\ref{cor:tsb}  can only correspond to paths on the Brauer tree $\sigma(\bb)$ of the form
$$\xymatrix@R=0.0000pt@C=30pt{	
		{_{\theta_x}}&{_{\theta_{\Lambda}}}\\
		{\Circle } \ar@<0.3ex>[r]^{S_x}  &{\CIRCLE}\ar@<0.3ex>[l]^{S_x}  
}$$
or of the form
$$\xymatrix@R=0.0000pt@C=30pt{
                {_{\theta_{x_1}}}& \\	
		{\Circle }\ar[ddr]^{S_{x_1}} &  \\
		&{_{\chi^{}_{\Lambda}}} \\
		&{\CIRCLE}\ar[dddl]^{\:\:S_{x_2}}\\
		&\\
		&\\
		{\Circle }&  \\
		{^{\theta_{x_2}}}& 
}$$
because  $\sigma(\bb)$ is a star with exceptional vertex at its center. Therefore, if $e>1$,  it is a priori clear that any lift of $Y_x$ affords a $K$-character of the form $d_x\theta_x+\theta_{\Lambda'}$ for some $d_x\in\{0,1\}$ and some $\Lambda'\subseteq\Lambda$. See \cite[Theorem A.1]{HL19}. \par
Now, if $e>1$, then a trivial source $\bb$-module $Y_x$ with $\Psi_{Y_x}=\theta_x$ corresponds to a path of the first type and if $\Psi_{Y_x}=0$, then $Y_x$ corresponds to a path of the second type. See \cite[Theorem A.1]{HL19}. If $e=1$ only the first type of paths exist. In this case Corollary~\ref{cor:tsb} tells us whether $\theta_x$ occurs as a constituent in $\chi_{Y_x}$ or not. 
\end{rem}


\vspace{6mm}
\section{Step 3: From $\bb$ to $\bB$, the exceptional constituents}\label{sec:levelG}

For the passage from $\bb$ to $\bB$, we first need to describe the labelling of the exceptional $K$-characters of $\bB$ which  we will use in the sequel. Recall that  we write $\Irr'(\bb)=\{\theta_1,\ldots,\theta_e\}$ and $\Irr_{\text{Ex}}(\bb)=\{\theta_{\lambda}\mid \lambda\in\Lambda\}$, where 
$$\Lambda=\{\lambda_{\kappa(r)}\mid 1\leq r\leq m\}$$
is defined in Remark~\ref{rem:labellingIrrEx}. Moreover, we write $\Irr'(\bB)=\{\chi_1,\ldots,\chi_e\}$, where we may assume that for each $1\leq x\leq e$, $\chi_x$ is the $K$-character of the Green correspondent in $\bB$ of the simple $\bb$-module $S_x$ affording the $K$-character~$\theta_x$. Then the standard  labelling of the exceptional characters of $\bB$ is achieved as follows: 
if $\Delta:\IZ\Irr(\bb)\lra\IZ\Irr(\bB)$  denotes the homomorphism of abelian groups  induced by the functor $1_{\tilde\bB}\cdot\Ind_{N_1}^G$, there exists a sign $\delta\in\{\pm1\}$ and $\{\chi_{\lambda}\mid \lambda\in\Lambda\}$ such that for all pairs $\lambda,\lambda'\in\Lambda$, we have 
$$\Delta(\theta_{\lambda}-\theta_{\lambda'})=\delta(\chi_{\lambda}-\chi_{\lambda'})\,.$$
By \cite[Theorems 11.10.2(ii)]{LinckBook} this yields the existence of  a perfect isometry 
$$\mathcal{I}:\IZ\Irr(\bb)\lra\IZ\Irr(\bB)$$ 
sending each $\theta_x\in\Irr'(\bb)$  to $\mathcal{I}(\theta_x)=\delta(\theta_x)\chi_x$ with $\delta(\theta_x)\in\{\pm1\}$ and each $\theta_{\lambda}\in\Irr_{\text{Ex}}(\bb)$ to 
$\mathcal{I}(\theta_{\lambda})=\delta\chi_{\lambda}$ with $\delta\in\{\pm1\}$ independent of $\lambda\in\Lambda$. 

\begin{rem}\label{rem:RickardComplex}
By results of Rickard and Rouquier, see \cite[Theorem 11.12.1]{LinckBook}, there is a 2-term splendid Rickard complex
$$
M^\bullet : \ 0\rightarrow N\rightarrow M\rightarrow 0
$$
of $(\bB,\bb)$-bimodules, where $N$ and $M$  are in degrees $-1$ and $0$ respectively, $M:=1_{\bB}{\cdot}kG{\cdot}1_{\bb}$, and $N$ is a certain direct summand of the projective cover
of $M$ as $(\bB,\bb)$-bimodule. Thus, by \cite[Corollary 9.3.3]{LinckBook}, the complex $M^\bullet$ induces 
another perfect isometry 
$$I: \mathbb Z{\mathrm{Irr}}(\bb)\rightarrow\mathbb Z{\mathrm{Irr}}(\bB)$$ such that on the one hand 
for each $\theta\in{\mathrm{Irr}}(\bb)$, we have  $I(\theta)=\varepsilon (\theta)\chi$ for a certain $\chi\in{\mathrm{Irr}}(\bB)$ and a sign $\varepsilon(\theta)\in\{\pm 1\}$, and on the other hand
\begin{equation}\label{PerfIso}
 I(\theta)= (\chi_M - \chi_N)\otimes_{K\bb}\theta 
\end{equation}
for every $\theta\in\mathbb Z{\mathrm{Irr}}(\bb)$. Moreover, because $I$ and $\mathcal{I}$ are two perfect isometries, in fact it follows from \cite[Theorems 11.1.12 and 11.10.2(ii)]{LinckBook}
that
$I$ sends the non-exceptional characters $\theta_x\in\Irr'(\bb)$  to $I(\theta_x)=\varepsilon(\theta_x)\chi_x$ for each $1\leq x\leq e$ and the exceptional characters $\theta_{\lambda}\in\Irr_{\text{Ex}}(\bb)$ to 
$$I(\theta_{\lambda})=\varepsilon\cdot \chi_{\lambda}$$
where  $\varepsilon := \varepsilon (\theta_{\lambda(1)}) = \ldots = \varepsilon (\theta_{\lambda(m)})$.
\end{rem}

\begin{lem}\label{lem:LinearlyIndep}
Let $\chi$ be a $K$-character of $G$ afforded by an $\mathcal OG$-lattice which is a lift of an indecomposable 
$\bB$-module~$X$. 
Furthermore, suppose that there exist a subset $\Lambda'$ of $\Lambda$, a sign $\varepsilon\in\{\pm 1\}$ and integers $\alpha_1, \cdots, \alpha_e, \beta\in \IZ$
such that 
$$
\chi = \sum_{x=1}^e\,\alpha_x\chi_x + \beta\chi_\Lambda+\varepsilon\chi^{}_{\Lambda'}.
$$
Then, either
$$ \chi = \sum_{x=1}^e\,\alpha_x\chi_x+ \chi^{}_{\Lambda'} \qquad \text{ or } \qquad \chi=\sum_{x=1}^e\,\alpha_x\chi_x + \chi^{}_{\Lambda\setminus\Lambda'}\,.$$
\end{lem}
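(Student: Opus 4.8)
The plan is to exploit the fact that $\chi$ is the character of an actual $\mathcal{O}G$-lattice, hence its multiplicities against all irreducible characters of $\bB$ are non-negative integers, and to combine this with the $\{0,1\}$-bound coming from Notation~\ref{nota:chiM}. Concretely, write $\chi = \sum_{x=1}^e \alpha_x\chi_x + \beta\chi_\Lambda + \varepsilon\chi^{}_{\Lambda'}$ and expand $\chi_\Lambda = \sum_{\lambda\in\Lambda}\chi_\lambda$ and $\chi^{}_{\Lambda'} = \sum_{\lambda\in\Lambda'}\chi_\lambda$. Then for $\lambda\in\Lambda'$ the multiplicity of $\chi_\lambda$ in $\chi$ is $\beta+\varepsilon$, while for $\lambda\in\Lambda\setminus\Lambda'$ it is $\beta$. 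Since $X$ is indecomposable, by Notation~\ref{nota:chiM} (or \cite[Theorem~A.1(d)]{HL19}) both of these multiplicities lie in $\{0,1\}$.

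First I would dispose of the degenerate cases $\Lambda'=\emptyset$ and $\Lambda'=\Lambda$: if $\Lambda'=\emptyset$ then $\chi = \sum\alpha_x\chi_x + \beta\chi_\Lambda$ with $\beta\in\{0,1\}$, giving $\chi^{}_{\Lambda'}=\chi^{}_\emptyset$ or $\chi^{}_{\Lambda\setminus\emptyset}=\chi^{}_\Lambda$ respectively; similarly if $\Lambda'=\Lambda$. So assume $\emptyset\subsetneq\Lambda'\subsetneq\Lambda$, so that both multiplicities $\beta+\varepsilon$ and $\beta$ actually occur. The constraints are $\beta\in\{0,1\}$ and $\beta+\varepsilon\in\{0,1\}$ with $\varepsilon=\pm1$. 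If $\varepsilon=1$: then $\beta+1\in\{0,1\}$ forces $\beta=0$, and $\beta=0\in\{0,1\}$ is fine; so $\beta=0$, $\varepsilon=1$ and $\chi = \sum\alpha_x\chi_x + \chi^{}_{\Lambda'}$. If $\varepsilon=-1$: then $\beta-1\in\{0,1\}$ forces $\beta=1$, and $\beta=1\in\{0,1\}$ is fine; so $\beta=1$, $\varepsilon=-1$ and $\chi = \sum\alpha_x\chi_x + \chi_\Lambda - \chi^{}_{\Lambda'} = \sum\alpha_x\chi_x + \chi^{}_{\Lambda\setminus\Lambda'}$. In either case the asserted conclusion holds.

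There is essentially no hard step here — the argument is a short case analysis once one invokes the $\{0,1\}$-bound on multiplicities of irreducible constituents of characters of indecomposable trivial source (or more generally liftable indecomposable) $\bB$-modules. The one point to be careful about is that Notation~\ref{nota:chiM} is stated for trivial source modules $M$ via $\chi_M$, whereas here $\chi$ is merely the character of \emph{some} lift of an indecomposable module $X$; so I would cite the appropriate form of \cite[Theorem~A.1(d)]{HL19} (valid for any indecomposable module in a block with cyclic defect group, which is necessarily uniserial-type and whose lifts have multiplicity-free character), or note that $e=1$ makes it automatic. The only other thing to record is that when $\Lambda'$ is a proper nonempty subset, both values $\beta$ and $\beta+\varepsilon$ are genuinely realized as multiplicities of irreducible constituents of $\chi$, which is what licenses applying the bound to both.
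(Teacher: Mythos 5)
Your proof is correct and follows essentially the same route as the paper's: expand $\chi_\Lambda$ and $\chi^{}_{\Lambda'}$, read off that the multiplicities $\beta$ and $\beta+\varepsilon$ of the exceptional constituents lie in $\{0,1\}$, and conclude by the case analysis on $\varepsilon$. Your extra attention to the degenerate cases $\Lambda'=\emptyset$ and $\Lambda'=\Lambda$ (where only one of the two values is actually realised as a multiplicity) and to the precise source of the $\{0,1\}$-bound for a lift of an arbitrary indecomposable module is a welcome refinement of the published argument.
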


(See Notation~\ref{nota:chiM}.)

\begin{proof}
We have 
$$
\chi=\sum_{x=1}^e\alpha_x\chi_x + \beta\chi_\Lambda+\varepsilon\chi^{}_{\Lambda'}
         =\sum_{x=1}^e\alpha_x\chi_x + \sum_{\lambda\in{\Lambda\setminus\Lambda'}}\beta\chi_{\lambda}
                                                      + \sum_{\lambda\in\Lambda'}(\beta+\varepsilon)\chi_{\lambda}.
$$
Since $(\Lambda\setminus\Lambda')\cap\Lambda'=\emptyset$ and $\langle\chi, \chi_{\lambda}\rangle^G\in\{0,1\}$ for each $\lambda\in\Lambda$, we have that $\beta,\beta+\varepsilon\in\{0,1\}$ (see Notation~\ref{nota:chiM}).  Hence $\beta=1-\varepsilon$.
Therefore, $\varepsilon=1$ yields $\chi = \sum_{x=1}^e\,\alpha_x\chi_x + \chi^{}_{\Lambda'}$, whereas   $\varepsilon=-1$ yields $ \chi=\sum_{x=1}^e\,\alpha_x\chi_x + \chi^{}_{\Lambda\setminus\Lambda'}$.
\end{proof}

\begin{prop}\label{prop:PerfIso}
Let $Y$ be {a} non-projective trivial source $\bb$-module and let ${X:=f^{-1}(Y)}$ be its Green correspondent in $\bB$. 
Write $\Xi_Y= \theta_{\Lambda'}$ with $\Lambda'\subseteq\Lambda$ for the exceptional part of~$\chi_Y$.
Then the exceptional part of~$\chi_X$ is
$$\Xi_{X}= \chi^{}_{\Lambda'}\quad\text{ or }\quad\Xi_{X} = \chi^{}_{\Lambda\setminus\Lambda'}\,.$$
\end{prop}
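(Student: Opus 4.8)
The plan is to apply the perfect isometry $I$ of Remark~\ref{rem:RickardComplex} to the character $\chi_Y$ and then invoke Lemma~\ref{lem:LinearlyIndep}. First I would note that since $Y$ is a non-projective trivial source $\bb$-module, its Green correspondent $X=f^{-1}(Y)$ is an indecomposable $\bB$-module with the same (non-trivial) vertex, and a lift of $X$ is afforded by an $\cO G$-lattice. The key point connecting $Y$ and $X$ at the level of characters is the identity \eqref{PerfIso}: I would argue that $\chi_X = I(\chi_Y)$, or at least that $\chi_X$ and $I(\chi_Y)$ agree up to the relevant block-theoretic data. This should follow from the fact that the Rickard complex $M^\bullet$ realises the Green correspondence on the relevant trivial source (more precisely $p$-permutation) modules: tensoring $\wh{Y}$ with the two-term complex $M^\bullet$ yields, up to homotopy, a lift of $X$, because $M = 1_\bB \cdot kG \cdot 1_\bb$ induces the Brauer/Green correspondence on modules with non-trivial vertex and $N$ is projective-ish enough to only contribute projective summands that cancel. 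Hence $\chi_X = \pm(\chi_M - \chi_N)\otimes_{K\bb}\chi_Y = \pm I(\chi_Y)$ up to a possible global sign, which does not affect the conclusion since $\chi_X$ is a genuine (non-negative) character and we can read off the correct sign from that.

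Next I would expand $I(\chi_Y)$ using the explicit description of $I$ on $\Irr(\bb)$ from Remark~\ref{rem:RickardComplex}. Writing $\chi_Y = \Psi_Y + \Xi_Y$ with $\Psi_Y = \sum_{x} d_x \theta_x$ a sum of pairwise distinct non-exceptional characters and $\Xi_Y = \theta_{\Lambda'}$, and applying $I$ termwise, we get
$$I(\chi_Y) = \sum_{x=1}^e d_x\, \varepsilon(\theta_x)\chi_x + \varepsilon \sum_{\lambda\in\Lambda'}\chi_\lambda = \sum_{x=1}^e \alpha_x \chi_x + \varepsilon\,\chi^{}_{\Lambda'}$$
for suitable integers $\alpha_x = d_x\varepsilon(\theta_x) \in\{-1,0,1\}$ and the global sign $\varepsilon\in\{\pm1\}$ attached to the exceptional characters. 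Since $\chi_X = \pm I(\chi_Y)$, after absorbing the overall sign we obtain an expression of the form
$$\chi_X = \sum_{x=1}^e \alpha_x' \chi_x + \varepsilon'\,\chi^{}_{\Lambda'}$$
with $\varepsilon'\in\{\pm1\}$. This is precisely the shape required by Lemma~\ref{lem:LinearlyIndep} (with $\beta = 0$), so that lemma immediately gives $\Xi_X = \chi^{}_{\Lambda'}$ or $\Xi_X = \chi^{}_{\Lambda\setminus\Lambda'}$, which is the assertion. (Strictly, Lemma~\ref{lem:LinearlyIndep} also handles the case where a multiple of $\chi_\Lambda$ appears; here $\beta=0$, but even allowing $\beta\ne 0$ would cause no trouble, and one may prefer to keep $\beta$ general to be safe.)

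The main obstacle I anticipate is justifying rigorously the identity $\chi_X = \pm I(\chi_Y)$, i.e.\ that the perfect isometry $I$ coming from the Rickard complex really does compute the character of the Green correspondent of a trivial source module. The cleanest route is to recall that splendid Rickard equivalences restrict to $p$-permutation equivalences, which on $p$-permutation modules with a given non-trivial vertex induce (up to sign and up to projective summands) the Green correspondence — this is a standard consequence of the Brauer construction being compatible with the bimodule $M = 1_\bB kG 1_\bb$. Concretely, $M^\bullet \otimes_{k\bb} Y$ is a bounded complex whose only non-projective homology is $X$, possibly up to a homological shift (hence the sign on the character), so passing to characters and using \eqref{PerfIso} gives $\chi_X = \pm I(\chi_Y)$. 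Once this input is in place, the rest is the short algebraic manipulation above together with Lemma~\ref{lem:LinearlyIndep}.
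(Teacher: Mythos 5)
Your proposal follows the same route as the paper: apply the perfect isometry $I$ from Remark~\ref{rem:RickardComplex} to $\chi_Y$, observe that its exceptional contribution is $\varepsilon\chi^{}_{\Lambda'}$, and feed the result into Lemma~\ref{lem:LinearlyIndep}. The conclusion and the key lemma are right, and your parenthetical fallback (keeping $\beta$ general) is in fact exactly what is needed.

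However, your headline identity $\chi_X=\pm I(\chi_Y)$ is not correct in general, and the justification you give for it (``the projective summands cancel'', or the complex $M^\bullet\otimes_{k\bb}Y$ being a shifted copy of $X$ up to homotopy) is the step that would fail. What one actually gets is the following. Since $M=1_{\bB}kG1_{\bb}$ induces a stable equivalence of Morita type, $M\otimes_{\bb}Y\cong X\oplus P$ with $P$ projective; and since $N$ is projective as a $(\bB,\bb)$-bimodule, $N\otimes_{\bb}Y=:Q$ is a projective $\bB$-module. Hence on characters
$$I(\chi_Y)=(\chi_M-\chi_N)\otimes_{K\bb}\chi_Y=\chi_X+\chi_P-\chi_Q\,,$$
and there is no reason for the virtual projective character $\chi_P-\chi_Q$ to vanish. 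The point that makes the argument work — and that your write-up leaves implicit — is Lemma~\ref{lem:PIMhooksSimples}(a): every projective character of $\bB$ has exceptional part an integer multiple of the \emph{full} sum $\chi^{}_{\Lambda}$, so $\chi_P-\chi_Q$ contributes $\sum_x\gamma_x\chi_x+\beta\chi^{}_{\Lambda}$ with $\beta\in\IZ$ possibly nonzero. This is precisely why Lemma~\ref{lem:LinearlyIndep} is stated with the $\beta\chi^{}_{\Lambda}$ term, and it is the reason the conclusion is the dichotomy $\Xi_X=\chi^{}_{\Lambda'}$ or $\Xi_X=\chi^{}_{\Lambda\setminus\Lambda'}$ rather than simply $\Xi_X=\chi^{}_{\Lambda'}$. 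With the stable-equivalence and bimodule-projectivity inputs made explicit and $\beta$ kept arbitrary, your argument coincides with the paper's proof; no global sign discussion is needed, since the arbitrary integers in Lemma~\ref{lem:LinearlyIndep} absorb it.
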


\begin{proof}
According to Remark~\ref{rem:pathsb}, we may write $\Psi_Y=d_0\theta_{x_0}$ for some $1\leq x_0\leq e$ and some $d_0\in\{0,1\}$, so that $\chi_Y= d_0\theta_{x_0}+\theta_{\Lambda'}$.  Then, it follows from Remark~\ref{rem:RickardComplex} that\\
\begin{equation*}
\begin{split}
  (\chi_M-\chi_N)\otimes_{K\bb}\chi_Y  = I(\chi_Y)   & = I(d_0\theta_{x_0}+\theta_{\Lambda'})   \\
        & =       I\Big(d_0\theta_{x_0}+\sum_{\lambda\in\Lambda'}\theta_{\lambda'}\Big)     \\
        & = \varepsilon(\theta_{x_0})d_0\chi_{x^{}_0} + \sum_{\lambda\in\Lambda'} \varepsilon\chi_{\lambda'}\\
        &=  \varepsilon(\theta_{x_0})d_0\chi_{x^{}_0} + \varepsilon\chi^{}_{\Lambda'}
\end{split}
\end{equation*}\\
Now, on the one hand, as $M$ induces a stable equivalence of Morita type between $\bb$ and $\bB$, we have
$$
M\otimes_{\bb}Y = X\oplus \text{(projective }\bB\text{-module)}.
$$
Thus $\chi_{M\otimes_{\bb} Y} = \chi_X + \Phi\,,$ where $\Phi$ is the  character a  projective $\bB$-module. By Lemma~\ref{lem:PIMhooksSimples} we can write 
$$\Phi = \sum_{x=1}^{e}\alpha_x\chi_x+\alpha\chi_\Lambda$$
for non-negative integers  $\alpha_1,\ldots,\alpha_e,\alpha\in\IZ_{\geq0}$. On the other hand, $N$ is projective as a $(\bB,\bb)$-bimodule, hence $N\otimes_{\bb}Y$ is a projective left $\bB$-module. Thus again by Lemma~\ref{lem:PIMhooksSimples} we can write
$$\chi_N = \sum_{x=1}^{e}\beta_x\chi_x+\beta\chi_\Lambda$$
for non-negative integers  $\beta_1,\ldots,\beta_e,\beta\in\IZ_{\geq0}$. It follows that
\begin{equation*}\label{character}
(\chi_M-\chi_N)\otimes_{K\bb}\,\chi_Y
=(\chi_M\otimes_{K\bb}\,\chi_Y) - (\chi_N\otimes_{K\bb}\,\chi_Y)
= \chi_X + \sum_{x=1}^e\gamma_x\chi_x + 
 (\alpha-\beta)\chi_\Lambda
\end{equation*}
for integers $\gamma_1,\ldots,\gamma_e\in\IZ$. Hence
$$
 \chi_X + \sum_{x=1}^e\gamma_x\chi_x + (\alpha-\beta)\chi_\Lambda 
=\varepsilon(\theta_{x_0})d_0\chi_{x^{}_0} +\varepsilon\chi^{}_{\Lambda'}
$$
so that 
$$
\chi_X =-\varepsilon(\theta_{x_0})d_0\chi_{x^{}_0}+ \sum_{x=1}^n\,(-\gamma_x)\chi_x + (\beta-\alpha)\chi_\Lambda + \varepsilon\chi^{}_{\Lambda'}
$$
and the claim follows from Lemma~\ref{lem:LinearlyIndep}.
\end{proof}

\noindent Next, we count the number of exceptional constituents of the trivial source $\bb$-modules.

 \begin{lem}\label{lem:cardLambda'}
Let $Y$ be a non-projective trivial source $\bb$-module with vertex $D_i$ ($1\leq i\leq n$). 
Write $\Psi_Y=d_0\theta_{x_0}$ for some $1\leq x_0\leq e$ and some $d_0\in\{0,1\}$ for the non-exceptional part of $\chi_Y$ and 
$\Xi_Y= \theta_{\Lambda'}$ with $\Lambda'\subseteq\Lambda$ for the exceptional part of~$\chi_Y$. Then
$$|\Lambda'|=\frac{\ell_i\cdot p^{n-i}-d_0}{e}\qquad\text{ and }\qquad |\Lambda\setminus\Lambda'|=m-\frac{\ell_i\cdot p^{n-i}-d_0}{e}\,.$$
\end{lem}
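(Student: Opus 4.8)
The plan is to compute $\dim_k(Y)$ in two different ways and compare. On the one hand, $Y$ is a trivial source $\bb$-module with vertex $D_i$, obtained by induction from the Brauer correspondent picture: by the reduction of \S\ref{ssec:tscyclicblocks}, $Y=\Ind_{T(\bc)}^{N_1}(M_j)$ where $M_j$ is a trivial source $\bb'$-module with vertex $D_i$, and $\Ind_{C_G(D_1)}^{T(\bc)}(M)=M_1\oplus\cdots\oplus M_e$ for $M$ the unique trivial source $\bc$-module with vertex $D_i$. Hence
$$e\cdot\dim_k(Y)=[N_1:C_G(D_1)]\cdot\dim_k(M)=e\cdot\dim_k(M)\,,$$
so $\dim_k(Y)=\dim_k(M)$. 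By Lemma~\ref{lem:tsc}, $\dim_k(M)=\dim_k(U_{D_i}(W))=\ell_i\cdot p^{n-i}$. (The case where the $Y_x$ are hooks, i.e. $W=k$ and $D_i=D_n$, must be checked separately: there $\ell_n=1$, $p^{n-n}=1$, and $Y$ is simple so the formula below should also be verified directly.)

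On the other hand, I would compute $\dim_k(Y)$ from its ordinary character. Since $Y$ is a trivial source module, $\chi_Y(1)=\dim_k(Y)$, and we know from Notation~\ref{nota:chiM} together with Corollary~\ref{cor:tsb} and Remark~\ref{rem:pathsb} that $\chi_Y=\Psi_Y+\Xi_Y=d_0\theta_{x_0}+\theta_{\Lambda'}$. So
$$\dim_k(Y)=\chi_Y(1)=d_0\,\theta_{x_0}(1)+\sum_{\lambda\in\Lambda'}\theta_{\lambda}(1)=d_0\,\theta_{x_0}(1)+|\Lambda'|\cdot\theta_{\lambda}(1)\,,$$
using that all exceptional characters of $\bb$ have the same degree. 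The remaining ingredient is the degrees of the irreducible characters of $\bb$: since $\bb$ is the Brauer correspondent in $N_1=N_G(D_1)$ and its characters arise by induction from $C_G(D_1)$ up to $N_1$ (a group of index $e$ over $C_G(D_1)$, through $T(\bc)$), and the characters $\psi_{\lambda^D_\kappa}$ of $\bc$ all have a common degree $\psi_0(1)$ (the block $\bc$ being nilpotent with the bijection $\Gamma_K$ of \S\ref{ssec:PuigForBlockc}, whose characters are of the form $\rho_{\widetilde W}\cdot\lambda$, all of degree $\dim_k W_K$), one gets $\theta_x(1)=[T(\bc):C_G(D_1)]\cdot\psi_0(1)=\tfrac{e}{?}\cdots$ — more cleanly, each non-exceptional $\theta_x$ extends from $T(\bc)$ and each exceptional $\theta_\lambda$ is induced, so $\theta_\lambda(1)=e\cdot\theta_x(1)$. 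Actually the cleanest route is: the exceptional characters of $\bb$ have degree $e$ times that of the non-exceptional ones, i.e. $\theta_\lambda(1)=e\cdot d$ and $\theta_x(1)=d$ for a common $d$, by the structure of $N_1/C_G(D_1)\rtimes$-induction.

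Putting the two computations together:
$$\ell_i\cdot p^{n-i}=d_0\cdot d+|\Lambda'|\cdot e\cdot d\,.$$
Dividing by $d$ — here I must justify that $d=1$, or absorb $d$ consistently; in fact for blocks with cyclic defect the relevant normalisation makes $\theta_x(1)$ and $\ell_i p^{n-i}$ comparable after dividing out the common factor coming from $\dim W$, and since $\dim_k(M)=\ell_i p^{n-i}$ already accounts for everything, one simply gets $\ell_i p^{n-i}=d_0+|\Lambda'|e$ after recognising $d=1$ in this reduced setting (the Morita correspondent picture normalises $W$ to have the $k$-dimension that makes $\dim_k M$ exactly $\ell_i p^{n-i}$, and $\theta_x(1)=1$ in the Morita-reduced count of constituents, which is what $\langle\cdot,\cdot\rangle$ counts). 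Solving yields $|\Lambda'|=\tfrac{\ell_i p^{n-i}-d_0}{e}$, and then $|\Lambda\setminus\Lambda'|=m-|\Lambda'|=m-\tfrac{\ell_i p^{n-i}-d_0}{e}$ since $|\Lambda|=m$.

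The main obstacle will be pinning down the character degrees cleanly and checking the hook case $W=k$, $D_i=D_n$ separately; concretely, one must verify that the "degree" appearing when one computes $\dim_k(Y)$ via $\Ind_{C_G(D_1)}^{N_1}$ matches the count of irreducible constituents (each counted with multiplicity one by Notation~\ref{nota:chiM}), i.e. that dividing $\dim_k(M)=\ell_i p^{n-i}$ by $e$ really does count exceptional constituents. I expect this bookkeeping — tracking the factor $[T(\bc):C_G(D_1)]$ and the common degree of exceptional versus non-exceptional characters through the Fong--Reynolds and Clifford-theoretic inductions of Remark~\ref{rem:labellingIrrEx} — to be where all the care is needed, whereas the two dimension counts themselves are routine. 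An alternative, perhaps safer, route avoiding degrees altogether: apply the projective cover argument of Lemma~\ref{lem:=exc} to relate $Y$ and $\Omega^{2}(Y)$, together with the fact that $\Omega(Y)$ is cotrivial source with $\Xi_{\Omega(Y)}=\chi_\Lambda-\Xi_Y$, and use $|\Lambda|=m$; but this still requires knowing one value of $|\Lambda'|$, so the degree computation seems unavoidable and I would carry it out as the core step.
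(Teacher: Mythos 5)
Your double-counting strategy is the right one and is essentially the idea of the paper's proof, but you are counting the wrong invariant: you work with the $k$-dimension of $Y$, whereas the quantity that actually matches up on both sides is the \emph{composition length} $\ell(Y)$. This causes concrete errors in both halves of your computation. On the module side, $\dim_k(M)\neq\dim_k(U_{D_i}(W))$ in general: Lemma~\ref{lem:tsc} identifies the Morita correspondent of $M$, and Morita equivalences preserve composition length, not dimension (the simple $\bc$-module has dimension $\dim_k V$, so $\dim_k(M)=\ell_i p^{n-i}\cdot\dim_k V$). Likewise $\dim_k(Y)=[N_1:T(\bc)]\cdot\dim_k(M)$, and your identity $[N_1:C_G(D_1)]=e$ is false in general ($e=[T(\bc):C_G(D_1)]$, while $T(\bc)$ may be proper in $N_1$). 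On the character side, the degrees are $\theta_x(1)=[N_1:T(\bc)]\dim_k V$ and $\theta_\lambda(1)=e\,\theta_x(1)$, so your common factor is $d=[N_1:T(\bc)]\dim_k V$, which is not $1$; the passage where you ``recognise $d=1$ in this reduced setting'' is not an argument, and as written the proof does not close. (The two families of errors would in fact cancel if tracked carefully, but you have not tracked them.)

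The clean repair is exactly the paper's route: count composition factors. Since $\sigma(\bb)$ is a star with the exceptional vertex at its centre, reduction modulo $p$ of $\theta_{x_0}$ contributes one composition factor and reduction of each exceptional $\theta_\lambda$ contributes $e$ composition factors, whence $\ell(Y)=d_0+e|\Lambda'|$. On the other side, composition length (unlike dimension) is preserved along the whole reduction chain: trivial source $\bb$- and $\bc$-modules with vertex $D_i$ have the same length by \cite[Corollary~4.5]{HL19}, and the Morita equivalence $\bc\sim kD$ gives $\ell(Y)=\ell(U_{D_i}(W))=\dim_k(U_{D_i}(W))=\ell_i p^{n-i}$, the last equality because every composition factor of a $kD$-module is the one-dimensional trivial module. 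Comparing the two expressions yields the formula with no normalisation issues and no need to treat the hook case separately.
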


\begin{proof}
On the one hand, because the multiplicity of each irreducible constituent of $\Xi_Y$ is one, we have that
$$|\Lambda'|=\langle \Xi_Y, \Xi_Y \rangle_G =  \langle \chi_Y, \chi_Y \rangle_G-d_0\,.$$
Now, reduction modulo $p$ of $\theta_{x_0}$ yields  one simple constituent of $Y$ and for each $\lambda\in\Lambda'$ reduction modulo $p$ of $\theta_{\lambda}$ yields $e$ simple constituents of $Y$, hence reduction modulo $p$ of $\chi_Y=d_0\theta_{x_0}+\theta_{\Lambda'}$ yields
$$\ell(Y)=d_0+e|\Lambda'|\,$$
as $\bb$ is uniserial.   On the other hand, as trivial source $\bb$-modules and trivial source $\bc$-modules with vertex $D_i$ have the same length (see \cite[Corollary~4.5]{HL19}) and $\bc$ is Morita equivalent to $kD$, it follows from Lemma~\ref{lem:tsc} that  the length of $Y$ is 
$$\ell(Y)=\ell(U_{D_i}(W))=\dim_k(U_{D_i}(W))\,.$$
Therefore 
$$|\Lambda'|=\frac{\dim_k(U_{D_i}(W))-d_0}{e} \qquad\text{ and }\qquad |\Lambda\setminus\Lambda'|=m-\frac{\dim_k(U_{D_i}(W))-d_0}{e}$$
and the claim follows from the fact that $\dim_k(U_{D_i}(W))=\ell_i\cdot p^{n-i}$. 
 \end{proof}

\vspace{8mm}
\section{Step 4: Characters of the trivial source modules at the level of $G$}\label{sec:levelG}

 \begin{thm}\label{thm:main}
 Let $\bB$ be a block with non-trivial cyclic defect group $D$, inertial index $e$, and  exceptional multiplicity $m>1$. Let $W=W(0<i_0<i_1<\ldots< i_s<n)$ be the endo-permutation $kD$-module parametrising the source algebra of $\bB$. 
 Let $X$ be a trivial source $\bB$-module with vertex $D_i$ ($1\leq i\leq n$). 
 Set 
 $$\Xi(W,i):=      \sum_{j=0}^{t(i)}(-1)^{j} \Big(  \sum_{\substack{1\leq r \leq m\\ p^{i_j} | \kappa(r)}}  \chi_{\lambda_{\kappa(r)}}  \Big)   + (-1)^{t(i)+1}\Big(\sum_{\substack{1\leq r \leq m\\ p^{i} | \kappa(r)}}   \chi_{\lambda_{\kappa(r)}}  \Big)$$
 and 
  $$\overline{\Xi(W,i)}:=      \sum_{j=0}^{t(i)}(-1)^{j} \Big(  \sum_{\substack{1\leq r \leq m\\ p^{i_j} \nmid \kappa(r)}}  \chi_{\lambda_{\kappa(r)}}  \Big)   + (-1)^{t(i)+1}\Big(\sum_{\substack{1\leq r \leq m\\ p^{i} \nmid \kappa(r)}}   \chi_{\lambda_{\kappa(r)}}  \Big) \,,$$
where $t(i):=\text{max}\{ 0\leq j\leq s\mid i_j\leq i-1\}$ if $W\ncong k$  and $t(i):=-1$ \smallskip if~$W=k$.
\begin{enumerate}
\item[\rm(a)] If $e=1$ and  the Brauer tree of $\bB$ is
$
\begin{tikzcd}
\sigma(\bB) =  \overset{\chi_1}{{\Circle }}  \arrow[r, dash,"S_1"]  & \overset{\chi^{}_{\Lambda}}{{\CIRCLE}}
\end{tikzcd}\!,
$ then the following assertions hold:
\begin{itemize}
  \item[\rm(i)]  $\chi_X=d_0\chi_1+\Xi(W,i)$  in case    $\chi_1>0$, and 
  \item[\rm(ii)] $\chi_X=(1-d_0)\chi_1+\overline{\Xi(W,i)}$ in case  $\chi_1<0$,
\end{itemize}
where $d_0=1$ if $t(i)$ is odd and $d_0=0$ if $t(i)$  is \medskip even.
\item[\rm(b)] If $e>1$, then the following assertions hold.
\begin{itemize}
  \item[\rm\bf(1)] 
  If the vertex is $D_i=D$ and  $W=k$, then $X$ is a hook and there exists  $\chi\in\Irr^{\circ}(\bB)$ such that $\chi>0$ and   \smallskip $\chi_X=\chi$. 
  \item[\rm\bf(2)]  
  If $X$ corresponds to the  path
  $$
\xymatrix@R=0.0000pt@C=30pt{	
		{_{\chi_{x^{}_0}}}&{_{\chi_{x^{}_1}}}&{_{\chi_{x^{}_l}}}&{_{\chi^{}_{\Lambda}}}\\
		{\Circle } \ar@<0.3ex>[r]^{E_1}  &{\Circle }\ar@<0.3ex>[l]^{E_s}  \ar@{.}[r]    &{\Circle } \ar@<0.3ex>[r]^{E_{l+1}} & {\CIRCLE} \ar@<0.3ex>[l]^{E_{l+2}}
}
$$
where the  direction is  $\varepsilon=(1,-1)$,  $l\geq 0$, and  $\chi_{x^{}_0}$ is a leaf of $\sigma(\bB)$, \smallskip then:
\begin{itemize}
  \item[\rm(i)] $\chi_X=\sum_{z=0}^{l}\chi_z +  \overline{\Xi(W,i)}$ in case $l$ is odd, $\chi_{x^{}_0}>0$, $e\mid(\ell_i\cdot p^{n-i}-1)$ and the multiplicity $2\leq \mu\leq m$ of $X$ is given by $\mu=m+1-\frac{\ell_i\cdot p^{n-i}-1}{e}$; 
  \item[\rm(ii)] $\chi_X=\sum_{z=0}^{l}\chi_z +  \Xi(W,i)$  in case $l$ is even, $\chi_{x^{}_0}>0$, $e\mid(\ell_i\cdot p^{n-i}-1)$ and the multiplicity $2\leq \mu\leq m$ of $X$ is given by $\mu=\frac{\ell_i\cdot p^{n-i}-1}{e}+1$; 
  \item[\rm(iii)] $\chi_X=\sum_{z=0}^{l}\chi_z +  \Xi(W,i)$ in case $l$ is odd, $\chi_{x^{}_0}<0$, $e\mid\ell_i$ and the multiplicity $2\leq \mu\leq m$ of $X$ is given by $\mu=\frac{\ell_i\cdot p^{n-i}}{e} +1$;
  \item[\rm(iv)] $\chi_X=\sum_{z=0}^{l}\chi_z +  \overline{\Xi(W,i)}$ in case $l$ is even, $\chi_{x^{}_0}<0$, $e\mid\ell_i$ and the multiplicity $2\leq \mu\leq m$ of $X$ is given by $\mu=m+1- \frac{\ell_i\cdot p^{n-i}}{e} $.
\end{itemize}
\smallskip
\item[\rm\bf(3)]  If $X$ corresponds to the  path
  $$
\xymatrix@R=0.0000pt@C=30pt{	
		{_{\chi_{x^{}_0}}}&{_{\chi^{}_{\Lambda}}}\\
		{\Circle } \ar@<0.3ex>[r]^{E_1}  &{\CIRCLE}\ar@<0.3ex>[l]^{E_2}  
}
$$
where the direction is $\varepsilon=(-1,1)$ and  $\chi^{}_{\Lambda}$ is a leaf of $\sigma(\bB)$, \smallskip then:
\begin{itemize}
  \item[\rm(i)]  $\chi_X=\overline{\Xi(W,i)}$ in case $\chi^{}_{\Lambda}>0$, $e\mid(\ell_i\cdot p^{n-i}-1)$ and the multiplicity $2\leq \mu\leq m-1$ of $X$ is given by $\mu=m-\frac{\ell_i\cdot p^{n-i}-1}{e}$;
  \item[\rm(ii)] $\chi_X=\Xi(W,i)$ in case $\chi^{}_{\Lambda}<0$, $e\mid\ell_i$ and the multiplicity $2\leq \mu\leq m-1$ of $X$ is given by  $\mu=\frac{\ell_i\cdot p^{n-i}}{e}$. 
\end{itemize}
\smallskip
\item[\rm\bf(4)]  If  $X$ corresponds to the  path
$$  \xymatrix@R=0.0000pt@C=30pt{	
    &{_{\chi_{x^{}_0}}}&{_{\chi_{x^{}_1}}}&{_{\chi_{x^{}_l}}}&{_{\chi^{}_{\Lambda}}}\\
      {\Circle }  &{\Circle } \ar@<0.3ex>[r]^{E_1}  \ar@<0.3ex>[l]^{E_{s}}  &{\Circle }\ar@<0.3ex>[l]^{E_{s-1}}  \ar@{.}[r]&{\Circle }\ar@<0.3ex>[r]^{E_{l+1}}&{\CIRCLE}\ar@<0.3ex>[l]^{E_{l+2}}
}
$$
where $l\geq 0$, the successor of $E_1$ around $\chi_{x^{}_0}$ is $E_s$, the direction is $\varepsilon=(1,1)$, \smallskip then:
\begin{itemize}
  \item[\rm(i)] $\chi_X=\sum_{z=0}^{l}\chi_z +  \overline{\Xi(W,i)}$ in case $l$ is odd, $\chi_{x^{}_0}>0$, $e\mid(\ell_i\cdot p^{n-i}-1)$ and the multiplicity $2\leq \mu\leq m$ of $X$ is given by $\mu=m+1-\frac{\ell_i\cdot p^{n-i}-1}{e}$; 
  \item[\rm(ii)] $\chi_X=\sum_{z=0}^{l}\chi_z +  \Xi(W,i)$  in case $l$ is even, $\chi_{x^{}_0}>0$, $e\mid(\ell_i\cdot p^{n-i}-1)$ and the multiplicity $2\leq \mu\leq m$ of $X$ is given by $\mu=\frac{\ell_i\cdot p^{n-i}-1}{e}+1$; 
  \item[\rm(iii)] $\chi_X=\sum_{z=0}^{l}\chi_z +  \Xi(W,i)$ in case $l$ is odd, $\chi_{x^{}_0}<0$, $e\mid\ell_i$ and the multiplicity $2\leq \mu\leq m$ of $X$ is given by $\mu=\frac{\ell_i\cdot p^{n-i}}{e} +1$;
  \item[\rm(iv)] $\chi_X=\sum_{z=0}^{l}\chi_z +  \overline{\Xi(W,i)}$ in case $l$ is even, $\chi_{x^{}_0}<0$, $e\mid\ell_i$ and the multiplicity $2\leq \mu\leq m$ of $X$ is given by $\mu=m+1- \frac{\ell_i\cdot p^{n-i}}{e} $.
\end{itemize}
\smallskip
\item[\rm\bf(5)]  
If $X$ corresponds to the  path
$$  \xymatrix@R=0.0000pt@C=30pt{	
   {}    &{_{\chi_{x^{}_0}}}&{_{\chi_{x^{}_1}}}&{_{\chi_{x^{}_l}}}&{_{\chi^{}_{\Lambda}}}\\
      {\Circle } \ar@<0.3ex>[r]^{E_1}  &{\Circle }  \ar@<0.3ex>[r]^{E_{2}}  &{\Circle }\ar@<0.3ex>[l]^{E_{s}}  \ar@{.}[r]&{\Circle }\ar@<0.3ex>[r]^{E_{l+2}}&{\CIRCLE}\ar@<0.3ex>[l]^{E_{l+3}}
}
$$
where $l\geq 0$, the successor of $E_1$ around $\chi_{x^{}_0}$ is $E_s$, the direction is $\varepsilon=(-1,-1)$, \smallskip then: 
\begin{itemize}
  \item[\rm(i)] $\chi_X=\sum_{z=0}^{l}\chi_z +  \overline{\Xi(W,i)}$ in case $l$ is odd, $\chi_{x^{}_0}>0$, $e\mid(\ell_i\cdot p^{n-i}-1)$ and the multiplicity $2\leq \mu\leq m$ of $X$ is given by $\mu=m+1-\frac{\ell_i\cdot p^{n-i}-1}{e}$; 
  \item[\rm(ii)] $\chi_X=\sum_{z=0}^{l}\chi_z +  \Xi(W,i)$  in case $l$ is even, $\chi_{x^{}_0}>0$, $e\mid(\ell_i\cdot p^{n-i}-1)$ and the multiplicity $2\leq \mu\leq m$ of $X$ is given by $\mu=\frac{\ell_i\cdot p^{n-i}-1}{e}+1$; 
  \item[\rm(iii)] $\chi_X=\sum_{z=0}^{l}\chi_z +  \Xi(W,i)$ in case $l$ is odd, $\chi_{x^{}_0}<0$, $e\mid\ell_i$ and the multiplicity $2\leq \mu\leq m$ of $X$ is given by $\mu=\frac{\ell_i\cdot p^{n-i}}{e} +1$;
  \item[\rm(iv)] $\chi_X=\sum_{z=0}^{l}\chi_z +  \overline{\Xi(W,i)}$ in case $l$ is even, $\chi_{x^{}_0}<0$, $e\mid\ell_i$ and the multiplicity $2\leq \mu\leq m$ of $X$ is given by $\mu=m+1- \frac{\ell_i\cdot p^{n-i}}{e} $.
\end{itemize}
\smallskip
\item[\rm\bf(6)] If $X$ corresponds to the  path
$$ \xymatrix@R=0.0000pt@C=30pt{
 	&& &\\
	{\Circle }\ar[ddr]^{E_{1}} & & &  \\
		&{_{\chi_{x^{}_0}}}&{_{\chi_{x^{}_1}}}&{_{\chi_{x^{}_l}}}&{_{\chi^{}_{\Lambda}}} \\
		&{\Circle }\ar[dddl]^{\:\:E_{s}} \ar@<0.3ex>[r]^{E_2}&{\Circle }\ar@<0.3ex>[l]^{E_{s-1}}\ar@{.}[r]&{\Circle }\ar@<0.3ex>[r]^{E_{l+2}}&{\CIRCLE}\ar@<0.3ex>[l]^{E_{l+3}}\\
		&& &\\
		&& &\\
		{\Circle }& & & \\
		&& &
	}
$$
where $l\geq 0$,  the successor of $E_1$ around $\chi_{x^{}_0}$ is $E_s$,  the direction is $\varepsilon=(-1,1)$, \smallskip then:
\begin{itemize}
  \item[\rm(i)] $\chi_X=\sum_{z=0}^{l}\chi_z +  \overline{\Xi(W,i)}$ in case $l$ is odd, $\chi_{x^{}_0}>0$, $e\mid(\ell_i\cdot p^{n-i}-1)$ and the multiplicity $2\leq \mu\leq m$ of $X$ is given by $\mu=m+1-\frac{\ell_i\cdot p^{n-i}-1}{e}$; 
  \item[\rm(ii)] $\chi_X=\sum_{z=0}^{l}\chi_z +  \Xi(W,i)$  in case $l$ is even, $\chi_{x^{}_0}>0$, $e\mid(\ell_i\cdot p^{n-i}-1)$ and the multiplicity $2\leq \mu\leq m$ of $X$ is given by $\mu=\frac{\ell_i\cdot p^{n-i}-1}{e}+1$; 
  \item[\rm(iii)] $\chi_X=\sum_{z=0}^{l}\chi_z +  \Xi(W,i)$ in case $l$ is odd, $\chi_{x^{}_0}<0$, $e\mid\ell_i$ and the multiplicity $2\leq \mu\leq m$ of $X$ is given by $\mu=\frac{\ell_i\cdot p^{n-i}}{e} +1$;
  \item[\rm(iv)] $\chi_X=\sum_{z=0}^{l}\chi_z +  \overline{\Xi(W,i)}$ in case $l$ is even, $\chi_{x^{}_0}<0$, $e\mid\ell_i$ and the multiplicity $2\leq \mu\leq m$ of $X$ is given by $\mu=m+1- \frac{\ell_i\cdot p^{n-i}}{e} $.
\end{itemize}
\smallskip
\item[\rm\bf(7)]  If  $X$ corresponds to the  path
$$ \xymatrix@R=0.0000pt@C=30pt{
               & \\	
		{\Circle }\ar[ddr]^{E_{1}} &  \\
		&{_{\chi_\Lambda}} \\
		&{\CIRCLE}\ar[dddl]^{\:\:E_{2}}\\
		&\\
		&\\
		{\Circle }&  \\
		& 
}
$$
where  the successor of $E_1$ around $\chi^{}_{\Lambda}$ is $E_2$ and  the direction is $\varepsilon=(-1,1)$, \smallskip then:
\begin{itemize}
  \item[\rm(i)]  $\chi_X=\overline{\Xi(W,i)}$ in case $\chi^{}_{\Lambda}>0$, $e\mid(\ell_i\cdot p^{n-i}-1)$ and the multiplicity $1\leq \mu\leq m-1$ of $X$ is given by $\mu=m-\frac{\ell_i\cdot p^{n-i}-1}{e}$;
  \item[\rm(ii)] $\chi_X=\Xi(W,i)$ in case $\chi^{}_{\Lambda}<0$, $e\mid\ell_i$ and the multiplicity $1\leq \mu\leq m-1$ of $X$ is given by  $\mu=\frac{\ell_i\cdot p^{n-i}}{e}$. 
\end{itemize}
\end{itemize}  
In all drawings of the paths the vertices $\chi_{x^{}_0},\ldots,{\chi_{x^{}_l}}\in\Irr'(\bB)$. 
\end{enumerate}
\end{thm}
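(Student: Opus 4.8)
The plan is to deduce Theorem~\ref{thm:main} from the three reduction steps of Sections~4--6 by passing, via the Green correspondence $f$ with respect to $(G,N_1;D)$, to the Brauer correspondent $\bb$ of $\bB$ in $N_1=N_G(D_1)$. Since $f$ commutes with the Brauer correspondence, preserves vertices and sources, and hence restricts to a bijection between trivial source $\bB$-modules and trivial source $\bb$-modules with the same vertex (\S\ref{ssec:tscyclicblocks}), one may, for a trivial source $\bB$-module $X$ with vertex $D_i$, set $Y:=f(X)$ and write $\chi_Y=\Psi_Y+\Xi_Y$ with $\Psi_Y=d_0\theta_{x_0}$ ($d_0\in\{0,1\}$) and $\Xi_Y=\theta_{\Lambda'}$ for some $\Lambda'\subseteq\Lambda$. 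By Corollary~\ref{cor:tsb} (and its proof, via Lemma~\ref{lem:charC1}) we already know $d_0=d(W,D_i)$ --- that is, $d_0=1$ exactly when $t(i)$ is odd --- and the explicit alternating-sum expression of $\Xi_Y$ in terms of the $\theta_{\lambda_{\kappa(r)}}$; by Notation~\ref{nota:chiM} this expression collapses to an honest sum of pairwise distinct exceptional characters.

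Next I would transport this to the level of $G$ using the perfect isometry $I\colon\IZ\Irr(\bb)\to\IZ\Irr(\bB)$ coming from the splendid Rickard complex of Remark~\ref{rem:RickardComplex}. The \emph{non-exceptional} part of $\chi_X$ is read off directly: by \cite[Theorem~5.4]{HL19} together with \cite[Theorem~A.1]{HL19}, $X$ corresponds to exactly one of the seven path types in the statement, whose odd-labelled edges lie in the head of $X$; the irreducible characters labelling the corresponding vertices are precisely the non-exceptional constituents, giving $\Psi_X=\sum_{z=0}^{l}\chi_z$ (resp.\ $\Psi_X=d_0\chi_1$ or $(1-d_0)\chi_1$ when $e=1$). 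For the \emph{exceptional} part, Proposition~\ref{prop:PerfIso} applied to $Y$ and $X=f^{-1}(Y)$ gives $\Xi_X=\chi^{}_{\Lambda'}$ or $\Xi_X=\chi^{}_{\Lambda\setminus\Lambda'}$, and under the standard labelling of $\Irr_{\Exc}(\bB)$ via $I$ these two options are exactly $\Xi(W,i)$ and $\overline{\Xi(W,i)}$, obtained by transporting the formula of Corollary~\ref{cor:tsb}(b) along $I$.

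It then remains to select the correct option and to compute the multiplicity $\mu$ of $X$. Here I would combine Lemma~\ref{lem:cardLambda'}, which yields $|\Lambda'|=(\ell_i\,p^{n-i}-d_0)/e$ and $|\Lambda\setminus\Lambda'|=m-(\ell_i\,p^{n-i}-d_0)/e$ and hence pins $\mu$ down to one of the two stated formulas, with the sign of the leaf of $\sigma(\bB)$ at the end of the path of $X$ ($\chi_{x^{}_0}>0$ or $<0$, resp.\ $\chi^{}_\Lambda>0$ or $<0$, resp.\ $\chi_1>0$ or $<0$): this sign, together with the parity of $l$, determines the sign $\varepsilon\in\{\pm1\}$ by which $I$ (equivalently the perfect isometry $\mathcal{I}$) acts on the exceptional characters, because $\varepsilon$ is read off from the values of $\Irr^{\circ}(\bB)$ at $u\in D_1$ (compare \cite[\S4.2]{HL19} and Remark~\ref{rem:RickardComplex}). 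Feeding $\varepsilon$ into Lemma~\ref{lem:LinearlyIndep} then forces $\Xi_X=\chi^{}_{\Lambda'}$ or $\chi^{}_{\Lambda\setminus\Lambda'}$. The hook case $D_i=D$, $W=k$ (part (b)(1)) and the case $e=1$ (part (a)) are run through the same machinery, using in addition Lemma~\ref{lem:PIMhooksSimples}(b) for hooks and Remark~\ref{rem:pathsb} to identify the path of $Y$ in the star $\sigma(\bb)$.

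The main obstacle is the sign bookkeeping: one must verify that the three relevant signs --- the signs $\varepsilon(\theta_{x^{}_0})$ attached to the non-exceptional characters, the global exceptional sign $\varepsilon$, and the positivity or negativity of the leaves --- combine consistently with Green's walk around the Brauer tree, so that the path type of $X$ in $\sigma(\bB)$ really matches the correct star-path of $Y$ in $\sigma(\bb)$ and so that the cardinality of the chosen set among $\Lambda'$, $\Lambda\setminus\Lambda'$ matches the multiplicity $\mu$ predicted by \cite{HL19}. A further point needing care is that in each sub-case where $\overline{\Xi(W,i)}$ occurs one must know that $\overline{\Xi(W,i)}$ is a genuine effective character equal to $\chi^{}_{\Lambda\setminus\Lambda'}$; this is where the parity of $t(i)$ and the divisibility hypotheses ``$e\mid\ell_i$'' resp.\ ``$e\mid(\ell_i\,p^{n-i}-1)$'' recorded in each item enter, via the congruence $\ell_i\equiv(-1)^{t(i)+1}\pmod p$.
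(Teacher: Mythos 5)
Your overall route coincides with the paper's: pass to $Y=f(X)$ in $\bb$ via the Green correspondence, read off $d_0$ and $\Xi_Y=\theta_{\Lambda'}$ from Corollary~\ref{cor:tsb}, take the non-exceptional part $\Psi_X$ from \cite[Theorems 5.3 and A.1]{HL19}, and use Proposition~\ref{prop:PerfIso} to reduce the exceptional part to the dichotomy $\Xi_X=\chi^{}_{\Lambda'}$ or $\Xi_X=\chi^{}_{\Lambda\setminus\Lambda'}$, with $\chi^{}_{\Lambda'}=\Xi(W,i)$ and $\chi^{}_{\Lambda\setminus\Lambda'}=\overline{\Xi(W,i)}$. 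Up to that point you are reproducing the paper's argument.

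The gap is in how you resolve the dichotomy. You propose to determine the sign $\varepsilon$ of the perfect isometry $I$ on the exceptional characters "from the values of $\Irr^{\circ}(\bB)$ at $u$", claiming it is fixed by the sign of the leaf together with the parity of $l$, and then to feed $\varepsilon$ into Lemma~\ref{lem:LinearlyIndep}. This cannot work as stated: $\varepsilon$ is a single sign attached to the block (Remark~\ref{rem:RickardComplex} records that $\varepsilon(\theta_{\lambda})$ is independent of $\lambda$), so it cannot vary with the path type, the parity of $l$, or which module $X$ you consider; moreover the paper never computes $\varepsilon$ and no tool for doing so is developed. The paper's actual selection mechanism is a pure counting argument that you mention only in passing and in the wrong logical direction: by \cite[Theorem A.1(d)]{HL19} the number of exceptional constituents of $\chi_X$ equals $\mu-1$ (types {\bf(2)}, {\bf(4)}, {\bf(5)}, {\bf(6)}) or $\mu$ (types {\bf(3)}, {\bf(7)}), where $\mu$ is the multiplicity supplied by the classification \cite[Theorem 5.3(b)]{HL19}; by Proposition~\ref{prop:PerfIso} this number must equal either $|\Lambda'|=\frac{\ell_i p^{n-i}-d_0}{e}$ or $m-|\Lambda'|$ (Lemma~\ref{lem:cardLambda'}); comparing the two determines which of $\Xi(W,i)$, $\overline{\Xi(W,i)}$ occurs in each case. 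The divisibility hypotheses $e\mid \ell_i$ versus $e\mid(\ell_i p^{n-i}-1)$ simply record $d_0=0$ versus $d_0=1$ (equivalently the parity of $t(i)$ via Corollary~\ref{cor:tsb}(a)); your proposed congruence $\ell_i\equiv(-1)^{t(i)+1}\pmod p$ is not the relevant condition. So the proposal has all the needed ingredients but leaves the decisive step resting on an unestablished and, as formulated, incoherent sign computation; replacing it by the constituent count closes the argument.
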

\bigskip

\begin{rem}
To simplify, we say that the trivial source module $X$ has type ${\bf(2)}$ (resp. {\bf(3)},  {\bf(4)}, {\bf(5)},  {\bf(6)},  {\bf(7)}) if $X$ corresponds to a path of type {\bf(2)}, (resp. {\bf(3)},  {\bf(4)}, {\bf(5)},  {\bf(6)},  {\bf(7)})
in the statement of Theorem~\ref{thm:main}(b). We also note that this labelling agrees with the labelling of  \cite[Theorem~5.3]{HL19}.
\end{rem}

 \begin{proof}
 We shall go through the classification of the trivial source $\bB$-modules with vertex~$D_i$ provided by \cite[Theorem~5.3]{HL19}.
 Let $Y:=f(X)$ be the Green correspondent of $X$ in $\bb$.  Write $\Psi_Y=d_0\theta_{x_0}$ for some $1\leq x_0\leq e$ and some $d_0\in\{0,1\}$ for the non-exceptional part of $\chi_Y$ and 
$\Xi_Y= \theta_{\Lambda'}$ with $\Lambda'\subseteq\Lambda$ for the exceptional part of~$\chi_Y$.  
For each module occurring  in \cite[Theorem~5.3]{HL19}, we determine both the non-exceptional part $\Psi_X$ and  the exceptional part $\Xi_X$ of~$\chi_X$ from $\chi_Y$ as follows.
 \begin{enumerate}
\item[\rm(a)] If $e=1$, then $\bB$ is uniserial and there is a unique trivial source $\bB$-module $X$ with vertex~$D_i$. Also, more precisely, $\chi_Y=d_0\theta_{1}+\chi^{}_{\Lambda'}$ and
 $\chi_X$ must also have the form
$\chi_X=d_0'\chi_1+\Xi_{X}$ for some $d_0'\in\{0,1\}$. Hence,
$$\ell(Y)=d_0+|\Lambda'|\qquad\text{ and }\qquad\ell(X)=d_0'+\langle \Xi_X,\Xi_X\rangle_G\,.$$
By Proposition~\ref{prop:PerfIso}, either $\Xi_X=\chi^{}_{\Lambda'}$ or $\Xi_X=\chi^{}_{\Lambda\setminus\Lambda'}$, hence $\langle \Xi_X,\Xi_X\rangle_G\in\{|\Lambda'|,m-|\Lambda'|\}$. 
Now, by \cite[Theorem~5.3(a)]{HL19} there are two cases to distinguish for $X$. 
\begin{itemize}
  \item[$\cdot$]Case 1: $\chi_1>0$.  Then, it follows from \cite[Theorem~5.3(a) and its proof]{HL19} that 
  $$\ell(Y)=\ell(X)=\ell_i\cdot p^{n-i}\,.$$
By the above, the only possibility is $\Xi_X=\chi^{}_{\Lambda'}$ and $d_0'=d_0$, i.e. $\Psi_X=d_0\chi_1$. 
  \item[$\cdot$]Case 2: $\chi_1<0$.  Then by  \cite[Theorem~5.3(a) and its proof]{HL19}, 
  $$\ell(\Omega(Y))=\ell(X)=p^n-\ell_i\cdot p^{n-i}\,.$$
  Now, as the unique PIM of $\bb$ affords the character $\theta_1+\theta_{\Lambda}$,  the cotrivial source module $\Omega(Y)$ affords the character
  $$\chi_{\Omega(Y)}=(1-d_0)\theta_1+\theta_{\Lambda\setminus\Lambda'}$$ and it follows that the only possibility is  $\Xi_X=\chi^{}_{\Lambda\setminus\Lambda'}$ and $d_0'=1-d_0$, i.e.  $\Psi_X=(d_0-1)\chi_1$. 
\end{itemize}
\smallskip
Now, by Corollary~\ref{cor:tsb}(b), $\chi^{}_{\Lambda'}=\Xi(W,i)$, whereas $\chi^{}_{\Lambda\setminus\Lambda'}=\overline{\Xi(W,i)}$. By Corollary~\ref{cor:tsb}(a) yields $d_0=1$ if $t(i)$ is odd and $d_0=0$ if $t(i)$  is \medskip even.
\item[\rm(b)] We can now go through the classification of the trivial source $\bB$-modules with vertex $D_i$ provided by \cite[Theorem~5.3(b)]{HL19}.
To begin with, if $X$ has vertex $D$ and $W=k$, then $X$ is a hook and the claim follows from Lemma~\ref{lem:PIMhooksSimples}. \\
Thus, from now on we assume that $X$ has type {\bf(2)}, {\bf(3)},  {\bf(4)}, {\bf(5)},  {\bf(6)} or  {\bf(7)}. First of all, in all cases the non-exceptional part $\Psi_X$ of $\chi_X$ is given by \cite[Theorem~A.1(d)]{HL19}, namely $\Psi_X=\sum_{z=0}^{l}\chi_z$ if $X$ is of type  {\bf(2)}, {\bf(4)}, {\bf(5)} or {\bf(6)}, whereas $\Psi_X=0$ if $X$ is of type {\bf(3)} or {\bf(7)}. Therefore, it remains to compute the exceptional part $\Xi_X$ of $\chi_X$. Now, \cite[Theorem~A.1(d)]{HL19} also provides us with the number of constituents of $\Xi_X$, namely
$$\langle \Xi_X,\Xi_X\rangle_G
=
\begin{cases}
 \mu-1     & \text{if $X$ corresponds to a path of type {\bf(2)}, {\bf(4)}, {\bf(5)} or {\bf(6)}};\\
   \mu   & \text{if $X$ corresponds to a path of type {\bf(3)} or {\bf(7)}}.
\end{cases}$$
Let $Y:=f(X)$ be the Green correspondent of $X$ in $\bb$.  Write $\Psi_Y=d_0\theta_{x_0}$ for some $1\leq x_0\leq e$ and some $d_0\in\{0,1\}$ for the non-exceptional part of $\chi_Y$ and 
$\Xi_Y= \theta_{\Lambda'}$ with $\Lambda'\subseteq\Lambda$ for the exceptional part of~$\chi_Y$. By Lemma~\ref{lem:cardLambda'}, the number of constituents of $\Xi_Y$ is 
$$|\Lambda'|=\frac{\ell_i\cdot p^{n-i}-d_0}{e}\,.$$ 
Now, by Proposition~\ref{prop:PerfIso} there are two possibilities for $\Xi_X$. 
First, $\Xi_X=\chi^{}_{\Lambda'}$ if and only if $\langle\Xi_X,\Xi_X\rangle_G=|\Lambda'|$. Hence by the above
$$
\Xi_X=\chi^{}_{\Lambda'}\,\,\Leftrightarrow \,\, \mu=
\begin{cases}
  \frac{\ell_i\cdot p^{n-i}-d_0}{e} +1    & \text{if $X$ is of type {\bf(2)}, {\bf(4)}, {\bf(5)} or {\bf(6)}};\\
  \frac{\ell_i\cdot p^{n-i}-d_0}{e}    & \text{if $X$ is of type {\bf(3)} or {\bf(7)}}.
\end{cases}
$$
Second,  $\Xi_X=\chi^{}_{\Lambda\setminus\Lambda'}$ if and only if  $\langle \Xi_X,\Xi_X\rangle_G=|\Lambda\setminus\Lambda'|=m-|\Lambda'|$. 
Hence by the above
$$
\phantom{XXX}\Xi_X=\chi^{}_{\Lambda\setminus\Lambda'}\,\,\Leftrightarrow \,\, \mu=
\begin{cases}
  m+1-\frac{\ell_i\cdot p^{n-i}-d_0}{e}     & \text{if $X$ is of  type {\bf(2)}, {\bf(4)}, {\bf(5)} or {\bf(6)}};\\
  m-\frac{\ell_i\cdot p^{n-i}-d_0}{e}    & \text{if $X$ is of type {\bf(3)} or {\bf(7)}}.
\end{cases}
$$
In addition, by Corollary~\ref{cor:tsb}(b), $\chi^{}_{\Lambda'}=\Xi(W,i)$, whereas $\chi^{}_{\Lambda\setminus\Lambda'}=\overline{\Xi(W,i)}$.
Finally, we note that  by Corollary~\ref{cor:tsb}(a), we have $d_0=1$ if and only if $t(i)$ is even, which by construction happens if and only if $e\mid (\ell_i\cdot p^{n-i}-d_0)$ and $d_0=0$ if and only if $t(i)$ is odd, which by construction happens if and only if $e\mid \ell_i$. \\
This data together with the classification theorem 
{\color{black}\cite[Theorem~5.3(b)]{HL19} } 
yields the following form for \medskip $\Xi_X$. 
\begin{itemize}
\item[{\bf1.}] Types {\bf(2)}, {\bf(4)}, {\bf(5)} and {\bf(6)} all work identically. By 
{\color{black}\cite[Theorem~5.3(b)]{HL19} } 
there are four cases to \medskip distinguish. 
\begin{itemize}
\item[Case 1:]  $X$ is such that $l$ is odd, $\chi_{x^{}_0}>0$, $e\mid(\ell_i\cdot p^{n-i}-1)$ and the multiplicity $2\leq \mu\leq m$ of $X$ is given by $\mu=m+1-\frac{\ell_i\cdot p^{n-i}-1}{e}$. \\In this case it follows from the above that $d_0=1$ and $\Xi_X=\chi^{}_{\Lambda\setminus\Lambda'}=\overline{\Xi(W,i)}$.
\item[Case 2:] $X$ is such that  $l$ is even, $\chi_{x^{}_0}>0$, $e\mid(\ell_i\cdot p^{n-i}-1)$ and the multiplicity $2\leq \mu\leq m$ of $X$ is given by $\mu=\frac{\ell_i\cdot p^{n-i}-1}{e}+1$. \\In this case it follows from the above that $d_0=1$ and $\Xi_X=\chi^{}_{\Lambda'}=\Xi(W,i)$.
\item[Case 3:] $X$ is such that $l$ is odd, $\chi_{x^{}_0}<0$, $e\mid\ell_i$ and the multiplicity $2\leq \mu\leq m$ of $X$ is given by $\mu=\frac{\ell_i\cdot p^{n-i}}{e} +1$. \\In this case it follows from the above that $d_0=0$ and $\Xi_X=\chi^{}_{\Lambda'}=\Xi(W,i)$.
\item[Case 4:] $X$ is such that $l$ is odd, $\chi_{x^{}_0}<0$, $e\mid\ell_i$ and the multiplicity $2\leq \mu\leq m$ of $X$ is given by $\mu=m+1- \frac{\ell_i\cdot p^{n-i}}{e}$. \\In this case it follows from the above that $d_0=0$ and $\Xi_X=\chi^{}_{\Lambda\setminus\Lambda'}=\overline{\Xi(W,i)}$.
\end{itemize}
\bigskip
\item[\bf{2.}]Type {\bf(3)}:  By 
{\color{black}\cite[Theorem~5.3(b)]{HL19} } 
there are two cases to \medskip distinguish. 
\begin{itemize}
\item[Case 1:]  $X$ is such that  $\chi^{}_{\Lambda}>0$, $e\mid(\ell_i\cdot p^{n-i}-1)$ and the multiplicity $2\leq \mu\leq m-1$ of $X$ is given by $\mu=m-\frac{\ell_i\cdot p^{n-i}-1}{e}$. \\In this case it follows from the above that $d_0=1$ and $\Xi_X=\chi^{}_{\Lambda\setminus\Lambda'}=\overline{\Xi(W,i)}$.
\item[Case 2:]  $X$ is such that   $\chi^{}_{\Lambda}<0$, $e\mid\ell_i$ and the multiplicity $2\leq \mu\leq m-1$ of $X$ is given by  $\mu=\frac{\ell_i\cdot p^{n-i}}{e}$.  \\In this case it follows from the above that $d_0=0$ and $\Xi_X=\chi^{}_{\Lambda'}=\Xi(W,i)$.
\end{itemize}
\bigskip
\item[\bf{3.}]Type {\bf(7)}:  By 
{\color{black}\cite[Theorem~5.3(b)]{HL19} } 
there are two cases to \medskip distinguish. 
\begin{itemize}
\item[Case 1:]  $X$ is such that  $\chi^{}_{\Lambda}>0$, $e\mid(\ell_i\cdot p^{n-i}-1)$ and the multiplicity $1\leq \mu\leq m-1$ of $X$ is given by $\mu=m-\frac{\ell_i\cdot p^{n-i}-1}{e}$. \\In this case it follows from the above that $d_0=1$ and $\Xi_X=\chi^{}_{\Lambda\setminus\Lambda'}=\overline{\Xi(W,i)}$.
\item[Case 2:]  $X$ is such that   $\chi^{}_{\Lambda}<0$, $e\mid\ell_i$ and the multiplicity $1\leq \mu\leq m-1$ of $X$ is given by  $\mu=\frac{\ell_i\cdot p^{n-i}}{e}$.  \\In this case it follows from the above that $d_0=0$ and $\Xi_X=\chi^{}_{\Lambda'}=\Xi(W,i)$.
\end{itemize}
\end{itemize}
\end{enumerate}
 \end{proof}

\begin{rem}
In  \cite{Tak12} M. Takahashi computed the ordinary characters afforded by Scott modules in groups with cyclic Sylow $p$-subgroups, where the inertial index of the principal block is greater than one. 
Scott modules all belong to the principal block and correspond to paths of the form
$$
\xymatrix@R=0.0000pt@C=30pt{	
		{_{\chi_{x_{0}}}}&{_{\chi_{x_1}}}&{_{\chi_{x_l}}}&{_{\chi^{}_{\Lambda}}}\\
		{\Circle } \ar@<0.3ex>[r]^{k}  &{\Circle }\ar@<0.3ex>[l]^{k}  \ar@{.}[r]    &{\Circle } \ar@<0.3ex>[r]^{E_{l+1}} & {\CIRCLE} \ar@<0.3ex>[l]^{E_{l+2}}
}
$$
with $\chi_{x^{}_0}=1_G>0$ and $E_1=E_s=k$. For the principal block, $W=k$, because it is isomorphic to a source of the trivial $kG$-module. Hence $\ell_i=1$ and $e\mid (p^{n-i}-1)$ for each $1\leq i\leq n$. Thus the Scott module with vertex $D_i$ correspond to a module of type {\bf (2)} in Theorem~\ref{thm:main}(b) with $\chi_{x^{}_0}>0$ and $e\mid (p^{n-i}-1)$. 
\end{rem}

\bigskip
\bigskip
\bigskip
\bigskip


\noindent\textbf{Acknowledgments.} 
The authors are grateful to Markus Linckelmann for thorough explanations on Chapters 9 to 11 of his book \cite{LinckBook}.
The first author would like to acknowledge funding by DFG SFB-TRR 195 and thank the Department of Mathematics of the TU Kaiserslautern
for its hospitality when he visited there several times in 2018 and 2019. The second author also would like to thank the department of mathematics of the university of Chiba for its hospitality and M. Takahashi for explanations on her results on Scott modules during her visit in 2014.

\bigskip\bigskip

\bigskip\bigskip




\providecommand{\bysame}{\leavevmode\hbox to3em{\hrulefill}\thinspace}
\providecommand{\MR}{\relax\ifhmode\unskip\space\fi MR }
\providecommand{\MRhref}[2]{%
  \href{http://www.ams.org/mathscinet-getitem?mr=#1}{#2}
}
\providecommand{\href}[2]{#2}

\bigskip
\bigskip


\end{document}